\def\N{{\mathbb N}}
\def\R{{\mathbb R}}
\def\Z{{\mathbb Z}}
\def\ddd{{\mathcal{D}}}
\def\f{{\mathcal F}}
\newcommand{\scal}[1]{\langle#1\rangle}
\newtheorem*{theorem*}{Theorem}
\numberwithin{equation}{section}
\newtheorem{theorem}{Theorem}[section]
\newtheorem{proposition}[theorem]{Proposition}
\newtheorem{lemma}[theorem]{Lemma}
\newtheorem{corollary}[theorem]{Corollary}
\DeclareMathOperator{\supp}{supp}
\DeclareMathOperator{\far}{far}
\DeclareMathOperator{\inte}{int}
\DeclareMathOperator{\cc}{cc}
\DeclareMathOperator{\bc}{bc}
\DeclareMathOperator{\PW}{PW}
\DeclareMathOperator{\BMO}{BMO}
\DeclareMathOperator{\ri}{ri}
\DeclareMathOperator{\dist}{dist}
\DeclareMathOperator{\pv}{p.v.}
\begin{document}

\title[Nehari's theorem in several variables]{Nehari's theorem for convex domain Hankel and Toeplitz operators in several variables}
\author{Marcus Carlsson}
\address{Centre for Mathematical Sciences, Lund University\\Box 118, SE-22100, Lund,  Sweden\\}
\email{mc@maths.lth.se}

\author{Karl-Mikael Perfekt}
\address{Department of Mathematics and Statistics,
	University of Reading, Reading RG6 6AX, United Kingdom}
\email{k.perfekt@reading.ac.uk}

\begin{abstract} We prove Nehari's theorem for integral Hankel and Toeplitz operators on simple convex polytopes in several variables. A special case of the theorem, generalizing the boundedness criterion of the Hankel and Toeplitz operators on the Paley--Wiener space, reads as follows. Let $\Xi = (0,1)^d$ be a $d$-dimensional cube, and for a distribution $f$ on $2\Xi$, consider the Hankel operator
	$$\Gamma_f (g)(x)=\int_{\Xi} f(x+y) g(y) \, dy, \quad x \in\Xi.$$
Then $\Gamma_f$ extends to a bounded operator on $L^2(\Xi)$ if and only if there is a bounded function $b$ on $\R^d$ whose Fourier transform coincides with $f$ on $2\Xi$. This special case has an immediate application in matrix extension theory: every finite multi-level block Toeplitz matrix can be boundedly extended to an infinite multi-level block Toeplitz matrix. In particular, block Toeplitz operators with blocks which are themselves Toeplitz, can be extended to bounded infinite block Toeplitz operators with Toeplitz blocks.
\end{abstract}
\maketitle


\section{Introduction}
For an open connected set $\Xi \subset \R^d$, $d \geq 1$, let
$$\Omega = \Xi + \Xi = \{x + y \, : \, x\in \Xi, \, y\in \Xi\},$$
and consider a distribution $f$ defined on $\Omega$. The associated general domain Hankel operator $\Gamma_f = \Gamma_{f, \Xi}$ is the (densely defined) operator $\Gamma_f \colon L^2(\Xi) \to L^2(\Xi)$, given by
\begin{equation*}
\Gamma_f (g)(x)=\int_{\Xi} f(x+y) g(y) \, dy, \quad x \in\Xi,
\end{equation*}
where $dy$ is the Lebesgue measure on $\R^d$.

The case $\Xi = \R_+ = (0, \infty)$ for $d=1$ corresponds to the class of usual Hankel operators; when represented in the appropriate basis of $L^2(\R_+)$, the operator $\Gamma_{f, \R_+}$ is realized as an infinite Hankel matrix $\{a_{n+m}\}_{n,m =0}^\infty$ \cite[Ch. 1.8]{peller2012hankel}. \textit{Nehari's theorem} characterizes the bounded Hankel operators $\Gamma_f \colon L^2(\R_+) \to L^2(\R_+)$. For a function $g$ on $\R^d$, we let
$\hat{g} = \mathcal{F}g$ denote its Fourier transform,
$$\hat{g}(\xi) = \mathcal{F}g(\xi) = \int_{\mathbb{R}^d} g(x) e^{-2\pi i x \cdot \xi} \, dx, \quad \xi \in \mathbb{R}^d.$$
\begin{theorem*}[Nehari \cite{Nehari}]
Suppose that $f$ is a distribution in $\R_+$, $f\in \ddd'(\R_+)$. Then $\Gamma_f \colon L^2(\mathbb{R}_+) \to L^2(\mathbb{R}_+)$ is bounded if and only if there exists a function $b\in L^\infty(\R)$ such that $\hat{b}|_{\R_+}=f$. Moreover, it is possible to choose $b$ so that
\begin{equation} \label{eq:neharicon}
\|\Gamma_f\| = \|b\|_{L^\infty}.
\end{equation}
\end{theorem*}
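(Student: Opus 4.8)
The plan is to transport the classical duality proof of Nehari's theorem --- the F.\ Riesz factorization $H^1 = H^2 \cdot H^2$ combined with Hahn--Banach --- to the half-line by means of the Paley--Wiener theorem. Work throughout on the dense subspace $\ddd(\R_+) \subset L^2(\R_+)$ of smooth functions with compact support in $(0,\infty)$. Since $\R_+ + \R_+ = \R_+$, we have $g * h \in \ddd(\R_+)$ whenever $g, h \in \ddd(\R_+)$, and one computes
\begin{equation*}
\langle \Gamma_f g, \overline{h}\rangle_{L^2(\R_+)} \;=\; \langle f,\, g * h\rangle ,
\end{equation*}
the right-hand side being the pairing of the distribution $f \in \ddd'(\R_+)$ with the test function $g * h$. \emph{Sufficiency} is then immediate: if $f = \hat b|_{\R_+}$ with $b \in L^\infty(\R)$, then by Parseval for tempered distributions and $\widehat{g*h} = \hat g \, \hat h$ we get $\langle f, g*h\rangle = \int_\R b\, \hat g\, \hat h$, whence $|\langle \Gamma_f g, \overline h\rangle| \le \|b\|_{L^\infty} \|\hat g\|_{L^2}\|\hat h\|_{L^2} = \|b\|_{L^\infty}\|g\|_{L^2}\|h\|_{L^2}$ by Plancherel, so that $\Gamma_f$ extends to a bounded operator with $\|\Gamma_f\| \le \|b\|_{L^\infty}$.

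Now assume $M := \|\Gamma_f\| < \infty$. The identity above shows that the bilinear form $B(g,h) = \langle f, g*h\rangle$ on $\ddd(\R_+)^2$ satisfies $|B(g,h)| \le M\|g\|_{L^2}\|h\|_{L^2}$, hence extends to a bounded form on $L^2(\R_+)^2$. By Paley--Wiener, the Fourier transform identifies $L^2(\R_+)$ isometrically with the Hardy space $H^2$ of a half-plane, and under this identification $B$ depends on its arguments only through the product $\hat g \, \hat h \in H^1$, since $g * h$ is the inverse Fourier transform of $\hat g \, \hat h$. By the F.\ Riesz factorization theorem, every $F \in H^1$ factors as $F = GH$ with $G, H \in H^2$ and $\|G\|_{L^2}\|H\|_{L^2} = \|F\|_{H^1}$; consequently $B$ induces a well-defined bounded linear functional $\Lambda$ on $H^1$ with $\|\Lambda\| \le M$ and $\Lambda(\hat g\, \hat h) = \langle f, g*h\rangle$. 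Extending $\Lambda$ by Hahn--Banach to $L^1(\R)$ without increasing its norm and invoking $(L^1)^* = L^\infty$, we obtain $b \in L^\infty(\R)$ with $\|b\|_{L^\infty} \le M$ such that $\int_\R b\, \hat g\, \hat h = \langle f, g*h\rangle$ for all $g, h \in \ddd(\R_+)$.

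It remains to identify $\hat b|_{\R_+}$ with $f$ in $\ddd'(\R_+)$. Both maps $\varphi \mapsto \langle f, \varphi\rangle$ and $\varphi \mapsto \langle \hat b, \varphi\rangle = \int_\R b\, \hat\varphi$ are continuous linear functionals on $\ddd(\R_+)$ --- for the latter because $|\int_\R b\,\hat\varphi| \le \|b\|_{L^\infty}\|\hat\varphi\|_{L^1}$ and $\|\hat\varphi\|_{L^1}$ is dominated by $\|\varphi\|_{C^2}$ times a constant depending only on $\supp\varphi$ --- and they agree on $\ddd(\R_+) * \ddd(\R_+)$ by the preceding paragraph (taking $\hat\varphi = \hat g\,\hat h$ when $\varphi = g * h$). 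Convolving a given $\varphi \in \ddd(\R_+)$ with a smooth approximate identity supported in a small interval $(0, \varepsilon)$ shows that $\ddd(\R_+) * \ddd(\R_+)$ is dense in $\ddd(\R_+)$; hence the two functionals coincide, i.e.\ $\hat b|_{\R_+} = f$. Finally, feeding this $b$ into the sufficiency estimate gives $\|\Gamma_f\| \le \|b\|_{L^\infty} \le M = \|\Gamma_f\|$, which establishes \eqref{eq:neharicon}.

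The analytic heart of the argument --- and the step I expect to be the main obstacle --- is the F.\ Riesz factorization of $H^1$ into a product of two $H^2$-functions with control of the norms; this is precisely what forces the symbol to lie in $L^\infty(\R)$ rather than merely in $\BMO$. The remaining ingredients (the half-plane Paley--Wiener theorem, Parseval for distributions, and the mild density and continuity bookkeeping made necessary by $f$ being only a distribution) are routine. An alternative route would replace the $H^1$-duality by the commutant lifting theorem applied to the shift-intertwining relation satisfied by $\Gamma_f$, but the duality approach seems more robust with a view toward a several-variables generalization.
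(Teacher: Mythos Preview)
The paper does not give its own proof of Nehari's classical theorem; it is quoted in the introduction as a known result, with the remark that the two standard proofs go ``either by factorization in the single variable Hardy space or by making use of the commutant lifting theorem.'' Your argument is precisely the first of these routes, and it is correct in outline.

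One point that deserves a more careful justification is the passage from the bilinear bound $|B(g,h)|\le M\|g\|_2\|h\|_2$ to a bounded \emph{linear} functional $\Lambda$ on $H^1$. For test functions the formula $B(g,h)=\langle f,g*h\rangle$ makes the dependence on $g*h$ (hence on $\hat g\,\hat h$) transparent; but once $B$ has been extended by continuity to $L^2(\R_+)\times L^2(\R_+)$, the distribution $f$ can no longer be paired directly with $g*h\in L^1$, and one must check separately that the extended form still depends only on the product. The F.~Riesz factorization of a general $F\in H^1$ does not produce factors whose inverse Fourier transforms lie in $\ddd(\R_+)$, so the inequality $|\Lambda(F)|\le M\|F\|_{H^1}$ is not an immediate consequence of the test-function estimate, and your density argument for $\ddd(\R_+)*\ddd(\R_+)$ in $\ddd(\R_+)$ does not by itself close this gap. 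The cleanest fix is the one the paper uses for the multivariable analogue (Proposition~\ref{pextend} and the proof of Theorem~\ref{lacey}): approximate $f$ in $\ddd'(\R_+)$ by symbols $f_n\in C_c^\infty(\R_+)$ with $\|\Gamma_{f_n}\|$ controlled by $\|\Gamma_f\|$; for such smooth symbols the pairing $\langle f_n,g*h\rangle$ is an honest integral against an $L^1$ function, the factorization argument goes through verbatim to yield $b_n\in L^\infty$ with $\|b_n\|_{L^\infty}\le\|\Gamma_f\|$, and any weak-$*$ limit point $b$ satisfies $\hat b|_{\R_+}=f$.
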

Nehari's theorem is canonical in operator theory. The two most common proofs proceed either by factorization in the single variable Hardy space or by making use of the commutant lifting theorem.

For $d > 1$, the operators $\Gamma_{f, \R_+^d}$, $\Xi = \R_+^d$, correspond to (small) Hankel operators on the product domain multi-variable Hardy space $H^2_d$. In this case, the analogue of Nehari's theorem remains true, apart from \eqref{eq:neharicon}, but it is significantly more difficult to prove. It was established by Ferguson and Lacey ($d=2$) and Lacey and Terwilleger ($d > 2$) \cite{ferguson2002characterization,MR2491875}. A precise statement is given in Theorem~\ref{lacey}.

The main purpose of this article is to prove Nehari's theorem when $\Xi \subset \R^d$ is a simple convex polytope.
{
	\renewcommand{\thetheorem}{\ref{t1alt}}
	\begin{theorem}
			Let $\Xi$ be a simple convex polytope, and let $f\in \ddd'(\Omega)$ where $\Omega=2\Xi$. Then $\Gamma_f \colon L^2(\Xi) \to L^2(\Xi)$ is bounded if and only if there is a function $b \in L^\infty(\R^d)$ such that $\hat b |_{\Omega}=f.$ There exists a constant $c > 0$, depending on $\Xi$, such that $b$ can be chosen to satisfy
			$$c\|b\|_{L^\infty} \leq \|\Gamma_f\| \leq \|b\|_{L^\infty}.$$
	\end{theorem}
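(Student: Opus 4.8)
The easy inclusion is a direct Fourier computation. Writing $\Lambda_{g,h} := (\mathbf{1}_\Xi h) * (\mathbf{1}_\Xi g)$, a continuous function supported in $\overline\Omega$ because $\Xi$ is convex, the bilinear form of $\Gamma_f$ equals $\langle f, \Lambda_{g,h}\rangle$; if $\hat b|_\Omega = f$ for some $b \in L^\infty(\R^d)$ this is $\langle b, \mathcal F\Lambda_{g,h}\rangle = \int_{\R^d} b\, \mathcal F(\mathbf{1}_\Xi h)\, \mathcal F(\mathbf{1}_\Xi g)$, so Cauchy--Schwarz and Plancherel give $\|\Gamma_f\| \le \|b\|_{L^\infty}$. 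The content is therefore the converse, together with the bound $c\|b\| \le \|\Gamma_f\|$. I would first note that it suffices to prove the converse \emph{qualitatively}, i.e. that boundedness of $\Gamma_f$ yields \emph{some} $b \in L^\infty(\R^d)$ with $\hat b|_\Omega = f$: the map $b \mapsto \hat b|_\Omega$ is then a bounded surjection from $L^\infty(\R^d)$ onto the Banach space of bounded Hankel symbols on $\Xi$ (normed by $f \mapsto \|\Gamma_f\|$), and the open mapping theorem supplies the constant $c$ --- necessarily non-explicit, which matches the statement.

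The qualitative converse I would prove by a local-to-global argument on the compact set $\overline\Omega$. The local claim is: for every $x_0 \in \overline\Omega$ there is a ball $U \ni x_0$ such that, whenever $\Gamma_f$ is bounded, there is $b_{x_0} \in L^\infty(\R^d)$ with $\hat b_{x_0} = f$ on $U \cap \Omega$. Granting this, cover $\overline\Omega$ by finitely many such balls $U_1, \dots, U_N$ with symbols $b_1, \dots, b_N$, pick a smooth partition of unity $1 = \sum_i \theta_i$ on a neighbourhood of $\overline\Omega$ with $\supp \theta_i \subset U_i$, and set $b := \sum_i (\mathcal F^{-1}\theta_i) * b_i$. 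Then $b \in L^\infty(\R^d)$ since each $\mathcal F^{-1}\theta_i \in L^1(\R^d)$, and $\hat b = \sum_i \theta_i \hat b_i$ agrees on $\Omega$ with $\sum_i \theta_i f = f$, because $\hat b_i = f$ on $U_i \cap \Omega \supset (\supp\theta_i) \cap \Omega$. This reduces the theorem to the local claim.

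For the local claim I would exploit that $\Xi$ is \emph{simple}: if $x_0$ lies in the relative interior of a $k$-dimensional face of $\Omega$, then the tangent cone of $\Xi$ at $\tfrac12 x_0$ is, after an affine change of coordinates, a ``wedge'' $\R^k \times \R_+^{d-k}$; call a translate of it $\Xi_{x_0}$, so that $\Xi$ coincides with $\Xi_{x_0}$ near $\tfrac12 x_0$ and $\Xi_{x_0} + \Xi_{x_0}$ coincides with $\Omega$ near $x_0$. Fixing $\eta \in C_c^\infty(U)$ with $\eta \equiv 1$ near $x_0$ and $U$ small, $\eta f$ is a legitimate symbol for the tangent-cone Hankel operator $\Gamma_{\eta f, \Xi_{x_0}}$ and agrees with $f$ near $x_0$. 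The local claim then follows from two ingredients: (i) a \emph{localization estimate}, that boundedness of $\Gamma_{f,\Xi}$ implies boundedness of $\Gamma_{\eta f, \Xi_{x_0}}$ with norm $\lesssim_\eta \|\Gamma_{f,\Xi}\|$; and (ii) \emph{Nehari for wedges}, that a bounded $\Gamma_{u, \Xi_{x_0}}$ has a symbol $v \in L^\infty(\R^d)$. For (ii) the vertex case $k = 0$ is a simplicial cone, affinely $\R_+^d$, so (ii) is Theorem~\ref{lacey} transported by a linear change of variables; for general $k$ one takes the Fourier transform in the $k$ flat directions, under which $\Gamma_{u, \R^k \times \R_+^{d-k}}$ becomes a direct integral over those frequencies of Hankel operators on $\R_+^{d-k}$, applies Theorem~\ref{lacey} (and Nehari's theorem itself when $d-k=1$) in each fibre with a measurable choice of symbol, and reassembles (the case $k = d$ is just Plancherel). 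The norm control needed inside (ii) is again handed to the open mapping theorem.

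The main obstacle is the localization estimate (i). Replacing $\Xi$ by its tangent cone while multiplying the symbol by a smooth cutoff does not interact simply with the convolution defining $\Gamma_f$, since cutting off does not commute with convolution; equivalently, one must show that multiplication by $\eta$ is bounded from the weak product $L^2(\Xi) \cdot L^2(\Xi)$ into $L^2(\Xi_{x_0}) \cdot L^2(\Xi_{x_0})$. Establishing this, and dovetailing it with a sufficiently careful choice of the cover and partition of unity in the gluing step, is where the real work lies; it also relies on the structural fact --- which a bounded $\Gamma_f$ forces --- that $f$ is a temperate distribution that is not too singular near $\partial\Omega$, so that $\eta f$ and its tangent-cone restriction make sense.
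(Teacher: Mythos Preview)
Your high-level strategy matches the paper's: the easy direction, the open mapping theorem for the constant $c$, and a partition-of-unity reduction to a local Nehari theorem ultimately resting on Theorem~\ref{lacey}. But the proposal is incomplete at exactly the point you flag—the localization estimate (i)—and that is where the paper's actual content lies. Two differences close the gap.

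First, the paper does not cover $\overline\Omega$ by small balls around arbitrary boundary points; it uses a partition of unity $\{\mu_j\}$ indexed by the \emph{vertices} of $\Omega$ alone, with $\supp\mu_j$ disjoint from the facets not containing $x_j$ (Lemma~\ref{l9}). This eliminates the wedge models $\R^k\times\R_+^{d-k}$ with $k>0$, and with them the direct-integral plus measurable-selection argument you sketch for (ii). Second, and this is the missing idea in your (i), the localization is carried out in two short moves. On the \emph{same} domain $\Xi$ one has $\|\Gamma_{\mu_j f,\Xi}\|\le\|\hat\mu_j\|_{L^1}\|\Gamma_{f,\Xi}\|$, obtained by writing $\mu_j(x+y)=\int\hat\mu_j(\xi)e^{2\pi i(x+y)\cdot\xi}\,d\xi$ and absorbing the phase into the input. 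Then, after an affine map taking the simple vertex $x_j$ to $0$ and the tangent cone to $\R_+^d$, one controls $\Gamma_{\mu_j f,\R_+^d}$ by tiling with small cubes $rC_n$: only finitely many pairs $(m,n)$ satisfy $(rC_m+rC_n)\cap\supp(\mu_j f)\ne\emptyset$, for $r$ small each such $rC_m+rC_n\subset\Omega$, and the translation identity $\|\Psi_{\mu_j f,\,rC_m,\,rC_n}\|=\|\Psi_{\mu_j f,\,rC_{(m+n)/2},\,rC_{(m+n)/2}}\|$ lands the midpoint cube inside $\Xi$ (since $2rC_{(m+n)/2}=rC_m+rC_n\subset\Omega=2\Xi$). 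This is Proposition~\ref{p1}; the cube-midpoint translation trick is precisely what your outline lacks, and it makes the transfer from $\Xi$ to its tangent cone elementary rather than an ``obstacle''.
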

	\addtocounter{theorem}{-1}
}
When $d = 1$, the only open connected sets $\Xi \subset \R$ are the intervals $\Xi = I$. In this case, Theorem~\ref{t1alt} is due to Rochberg \cite{rochberg1987toeplitz}, who called the corresponding operators $\Gamma_{f, I}$ Hankel/Toeplitz operators on the Paley--Wiener space. They have also been called Wiener--Hopf operators on a finite interval \cite{Pel88}. These operators have inspired a wealth of theory in the single variable setting -- see Section~\ref{subsec:PW}, where we shall interpret Theorem~\ref{t1alt} in the context of Paley--Wiener spaces.

Even for $d=1$, our proof of Theorem~\ref{t1alt} appears to be new. However, in several variables our proof relies on the Nehari theorem of Ferguson--Lacey--Terwilleger, and can therefore not be used to give a new proof of their results.

We shall also consider general domain Toeplitz operators $\Theta_f=\Theta_{f, \Xi} \colon L^2(\Xi) \to L^2(\Xi)$. In this context, $f$ is a distribution defined on $\Omega = \Xi - \Xi$, and $\Theta_f$ is densely defined via
$$\Theta_f(g)(x) = \int_{\Xi} f(x-y) g(y) \, dy, \quad  x \in\Xi.$$

If $\Xi$ after a translation is invariant under the reflection $x \mapsto -x$, then the classes of Hankel operators $\Gamma_{f, \Xi}$ and Toeplitz operators $\Theta_{\widetilde{f}, \Xi}$ are essentially the same, and Theorem~\ref{t1alt} immediately yields a boundedness result. This reasoning is applicable to the cube $\Xi = (0,1)^d$, for example.
{
	\renewcommand{\thetheorem}{\ref{t2}}
\begin{corollary}
	Let $\Xi$ be a simple convex polytope such that for some $z \in \R^d$ it holds that $\Xi+z = -\Xi - z$. Let $f \in \ddd'(\Omega)$, $\Omega=\Xi - \Xi = 2\Xi + 2z$. Then $\Theta_f$ is bounded if and only if there exists a function $b \in L^\infty(\R^d)$ such that $\hat b|_{\Omega} = f$. There exists a constant $c > 0$, depending on $\Xi$, such that $b$ can be chosen to satisfy
$$c\|b\|_{L^\infty} \leq \|\Theta_f\| \leq \|b\|_{L^\infty}.$$
\end{corollary}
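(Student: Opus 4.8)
The plan is to deduce the corollary directly from Theorem~\ref{t1alt}, by realizing $\Theta_f$ as a general domain Hankel operator after a translation followed by a reflection of the underlying polytope. The hypothesis $\Xi + z = -\Xi - z$ is exactly what makes both maneuvers legitimate: it says that the translate $W := \Xi + z$ is symmetric, $W = -W$, and $W$ is again a simple convex polytope.

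First I would check that translation is harmless. For $w \in \R^d$ the map $(U_w g)(x) = g(x-w)$ is a unitary operator from $L^2(\Xi)$ onto $L^2(\Xi + w)$, and a change of variables in the defining integral gives $U_w \Theta_{f,\Xi} = \Theta_{f,\Xi+w} U_w$; here $f$ is unchanged, since $(\Xi+w)-(\Xi+w) = \Xi-\Xi = \Omega$. Hence $\Theta_{f,\Xi}$ and $\Theta_{f,W}$ with $W = \Xi + z$ are unitarily equivalent. Moreover, using $W = -W$ and the convexity of $W$ (so that $W+W=2W$), we get $W + W = W - W = 2W = 2\Xi + 2z = \Omega$. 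We may therefore assume from now on that $\Xi = W$ is symmetric about the origin, in which case $\Omega = 2\Xi$ serves simultaneously as $\Xi - \Xi$ and as $\Xi + \Xi$.

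Next I would pass from Toeplitz to Hankel by reflection. Since $\Xi = -\Xi$, the reflection $(Rg)(x) = g(-x)$ is a unitary operator on $L^2(\Xi)$, and the substitution $y \mapsto -y$ in the defining integral yields $\Theta_{f,\Xi} R = \Gamma_{f,\Xi}$ with the same distribution $f$, now viewed as an element of $\ddd'(\Xi + \Xi) = \ddd'(\Omega)$; for genuine distributions one phrases this identity through the duality pairing rather than the integral, but the conclusion is the same. Consequently $\|\Theta_{f,\Xi}\| = \|\Gamma_{f,\Xi}\|$, and one operator is bounded precisely when the other is. Theorem~\ref{t1alt}, applied to the simple convex polytope $\Xi = W$ and to $f \in \ddd'(\Omega)$, then states that $\Gamma_{f,\Xi}$ is bounded if and only if $f = \hat b|_\Omega$ for some $b \in L^\infty(\R^d)$, with $b$ selectable so that $c\|b\|_{L^\infty} \le \|\Gamma_{f,\Xi}\| \le \|b\|_{L^\infty}$, where $c$ depends on $W = \Xi + z$, hence on the original $\Xi$. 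Chaining the two norm identities from the previous steps with this estimate gives the claimed equivalence and two-sided bound for $\Theta_f$.

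There is no substantive obstacle here; the corollary is a formal consequence of Theorem~\ref{t1alt}. The only points needing care are bookkeeping: verifying that translating $\Xi$ leaves the domain $\Omega$ of $f$ (and hence the admissible symbols $b$) untouched, that $\Omega$ can be read both as $\Xi-\Xi$ and as $\Xi+\Xi$ once $\Xi$ is symmetric, and that the reflection identity $\Theta_{f,\Xi} R = \Gamma_{f,\Xi}$ is stated at the level of distributions rather than integrable kernels.
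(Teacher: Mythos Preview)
Your proof is correct and follows essentially the same approach as the paper: both reduce $\Theta_f$ to a Hankel operator via the reflection/translation dictated by $\Xi+z=-\Xi-z$, then invoke Theorem~\ref{t1alt}. The only cosmetic difference is that the paper combines the two moves into a single substitution, writing $\Theta_f g = \Gamma_{\tilde f,\Xi}\tilde g$ with $\tilde f(x)=f(x+2z)$ and $\tilde g(x)=g(-x-2z)$ on the original $\Xi$, whereas you first translate to the symmetric polytope $W=\Xi+z$ and then reflect, keeping the same symbol $f$ but applying Theorem~\ref{t1alt} on $W$ instead.
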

	\addtocounter{theorem}{-1}
}
On the other hand, when $\Xi$ is a proper convex unbounded set, containing an open cone say, it is clear that the boundedness characterizations of $\Theta_{f, \Xi}$ and $\Gamma_{f,\Xi}$ may be completely different; plainly explained by the fact that $\Omega = \Xi - \Xi = \R^d$ in the Toeplitz case, while $\Omega = \Xi + \Xi = 2\Xi \subsetneq \R^d$ for Hankel operators. In this setting, identifying the boundedness of $\Theta_f$ carries none of the subtleties of Nehari-type theorems. In Theorem~\ref{toeplitz} we obtain the expected boundedness result for a class of ``cone-like'' domains $\Xi$. Rather than giving a precise statement here, let us record the following corollary of Theorem~\ref{toeplitz}.
{
	\renewcommand{\thetheorem}{\ref{cor:ex}}
\begin{corollary}
Let $\Xi \subset \R^d$ be any open connected domain such that
$$(1,\infty)^d \subset \Xi \subset (0,\infty)^d,$$
and let $f \in \ddd'(\R^d)$. Then $\Theta_{f} \colon L^2(\Xi) \to L^2(\Xi)$ is bounded if and only if $f$ is a tempered distribution and $\|\hat{f}\|_{L^\infty(\R^d)} < \infty$, and in this case
$$\|\Theta_f\| = \| \hat{f} \|_{L^\infty}.$$
\end{corollary}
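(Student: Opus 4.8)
The statement is recorded as a corollary of the general ``cone-like domain'' result Theorem~\ref{toeplitz}, and the quickest route is simply to verify that any $\Xi$ with $(1,\infty)^d\subset\Xi\subset(0,\infty)^d$ meets the hypotheses of that theorem. Since $\Omega=\Xi-\Xi=\R^d$ here, the statement carries none of the subtlety of a Nehari theorem, and it is instructive to sketch the direct argument as well. The plan is to prove the two implications separately: the ``if'' direction (sufficiency, with $\|\Theta_f\|\le\|\hat f\|_\infty$) holds for every open $\Xi\subset\R^d$, while the ``only if'' direction together with the reverse bound $\|\hat f\|_\infty\le\|\Theta_f\|$ is where the hypothesis $(1,\infty)^d\subset\Xi$ enters.

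For sufficiency, suppose $f$ is tempered with $m:=\hat f\in L^\infty(\R^d)$, and let $C_f\colon L^2(\R^d)\to L^2(\R^d)$, $C_f h=\mathcal{F}^{-1}(m\hat h)$, be the corresponding convolution operator, so that $\|C_f\|=\|m\|_\infty$. For $g\in C_c^\infty(\Xi)$, extended by zero to $\R^d$, the function $C_f g=f*g$ is smooth and lies in $L^2(\R^d)$, and $(f*g)(x)=\langle f, g(x-\cdot)\rangle=\Theta_f g(x)$ for $x\in\Xi$. Hence $\Theta_f=P_\Xi C_f J_\Xi$ with $J_\Xi$ extension by zero and $P_\Xi$ restriction to $\Xi$; as these are contractions, $\Theta_f$ extends boundedly and $\|\Theta_f\|\le\|\hat f\|_\infty$.

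For necessity and the reverse bound, suppose $\Theta_f$ is bounded. Fix $g\in C_c^\infty(\R^d)$ with $\supp g\subset[-R,R]^d$; for $t>R+1$ set $a_t=(t,\dots,t)$ and $g_t=g(\cdot-a_t)$, so that $\supp g_t\subset[t-R,t+R]^d\subset(1,\infty)^d\subset\Xi$ and $g_t\in C_c^\infty(\Xi)$. A direct change of variables gives $\Theta_f g_t(x)=(f*g)(x-a_t)$ for $x\in\Xi$, where $f*g$ is the smooth function $x\mapsto\langle f,g(x-\cdot)\rangle$, whence
\[
\int_{\Xi-a_t}|f*g|^2\,dx=\|\Theta_f g_t\|_{L^2(\Xi)}^2\le\|\Theta_f\|^2\|g\|_{L^2(\R^d)}^2 .
\]
Because $\Xi-a_t\supset(1-t,\infty)^d$ and $(1-t,\infty)^d\uparrow\R^d$ as $t\to\infty$, monotone convergence gives $\|f*g\|_{L^2(\R^d)}\le\|\Theta_f\|\,\|g\|_{L^2(\R^d)}$ for all $g\in C_c^\infty(\R^d)$. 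Thus $C_f\colon g\mapsto f*g$ extends to a bounded, translation-invariant operator on $L^2(\R^d)$ of norm $\le\|\Theta_f\|$, so it is a Fourier multiplier: there is $m\in L^\infty(\R^d)$ with $\widehat{f*g}=m\hat g$ and $\|m\|_\infty=\|C_f\|\le\|\Theta_f\|$. Applying this with $g$ a mollifier $\varphi_\varepsilon$ and letting $\varepsilon\to0$ (so $\hat\varphi_\varepsilon\to1$ boundedly while $f*\varphi_\varepsilon\to f$ in $\ddd'(\R^d)$) identifies $f$ with $\mathcal{F}^{-1}m$; hence $f$ is tempered, $\hat f=m\in L^\infty$, and $\|\hat f\|_\infty\le\|\Theta_f\|$. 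Together with sufficiency this yields $\|\Theta_f\|=\|\hat f\|_\infty$.

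I expect no genuine analytic obstacle: the one-sidedness responsible for the difficulty of Nehari-type results is absent since $\Xi-\Xi=\R^d$. The points that need care are purely formal — the pointwise identity $\Theta_f g_t(x)=(f*g)(x-a_t)$ for a distributional $f$, the upgrade from ``$f*g$ is locally $L^2$'' to ``$f*g\in L^2(\R^d)$'' via the exhausting boxes, and the concluding mollification that promotes ``$C_f$ is an $L^2$ multiplier'' to ``$f$ is tempered with $\hat f=m$''. The real content lies in Theorem~\ref{toeplitz}, where a genuine family of cone-like domains is handled, and the corollary is obtained by specialization.
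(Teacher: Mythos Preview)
Your proposal is correct. The paper gives no separate proof of the corollary; it simply records it after Theorem~\ref{toeplitz}, so your opening remark --- that one need only check the hypotheses with $\tilde\Xi=(0,\infty)^d$, $x_0=0$, $x_1=(1,\dots,1)$ --- already matches the paper's intended route.

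Your direct argument for the reverse bound is genuinely different from the paper's proof of Theorem~\ref{toeplitz}. There, the authors first regularize the symbol via Proposition~\ref{pextend2} to reduce to $f\in C_c^\infty$, and then extract $\hat f(\xi)$ pointwise by testing $\Theta_f$ against the exponentially weighted plane waves $E_\varepsilon(x)=e^{\varepsilon x\cdot\nu+2\pi i x\cdot\xi}\chi_\Xi(x)$, showing $\langle\Theta_f E_\varepsilon,E_\varepsilon\rangle/\|E_\varepsilon\|^2\to\hat f(\xi)$. You instead translate an arbitrary test function $g$ deep into $(1,\infty)^d\subset\Xi$, observe that $\Theta_f g_t$ is the restriction of $f*g$ to a shifted copy of $\Xi$, and let the shifts $\Xi-a_t\supset(1-t,\infty)^d$ exhaust $\R^d$ to conclude that convolution by $f$ is bounded on $L^2(\R^d)$; the multiplier then drops out of standard Fourier analysis, and the mollifier step identifies it with $\hat f$. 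Your approach is more elementary and avoids both the symbol approximation and the somewhat delicate limit \eqref{ef}, at the cost of being tailored to this orthant geometry; the paper's argument, while heavier, is written to cover the full cone-like class in Theorem~\ref{toeplitz} at once.
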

	\addtocounter{theorem}{-1}
}
In the final part of the paper we shall give an application of Theorem~\ref{t1alt} to matrix completion theory, essentially obtained by discretizing Corollary~\ref{t2} when $\Xi$ is a cube. To avoid introducing further notation, we shall only state the result in words for now. Recall that a Toeplitz matrix is one whose diagonals are constant. An $N \times N$ $d$-multilevel block Toeplitz matrix is an $N \times N$ Toeplitz matrix whose entries are $N\times N$ $(d-1)$-multilevel block Toeplitz matrices. Here $N$ could be finite or infinite. A $1$-multilevel block Toeplitz matrix is simply an ordinary Toeplitz matrix. A $2$-multilevel block Toeplitz matrix is what is usually considered a block Toeplitz matrix where each block itself is Toeplitz.
{
	\renewcommand{\thetheorem}{\ref{thm:blocktoep}}
	\begin{theorem}
	Every finite $N \times N$ $d$-multilevel block Toeplitz matrix can be extended to an infinite $d$-multilevel block Toeplitz matrix bounded on $\ell^2$, with a constant which only depends on the dimension $d$.
	\end{theorem}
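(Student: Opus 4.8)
\emph{The plan} is to discretise Corollary~\ref{t2} for a cube, as anticipated in the introduction. Fix $N$ and write the given finite $d$-multilevel block Toeplitz matrix as $A=(a_{i-j})_{i,j\in\{0,\dots,N-1\}^d}$, an operator on $\ell^2(\{0,\dots,N-1\}^d)$ determined by $a\colon\{-(N-1),\dots,N-1\}^d\to\C$. Since the infinite $d$-multilevel block Toeplitz matrix built from the Fourier coefficients of a function $b^\sharp\in L^\infty(\T^d)$ is bounded on $\ell^2$ with norm at most $\|b^\sharp\|_{L^\infty(\T^d)}$, it suffices to produce $b^\sharp\in L^\infty(\T^d)$ with $\widehat{b^\sharp}(k)=a_k$ for all $|k|_\infty\le N-1$ and $\|b^\sharp\|_{L^\infty(\T^d)}\le C_d\|A\|$; the associated infinite matrix then extends $A$ (it agrees with $A$ in the corner indexed by $\{0,\dots,N-1\}^d$) and has the desired bound.

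\emph{First} I would realise $A$ as a general domain Toeplitz operator. Let $\Xi=(0,N)^d$, so $\Omega=\Xi-\Xi=(-N,N)^d$, and put $f=\sum_{|k|_\infty\le N-1}a_k\delta_k\in\ddd'(\Omega)$. Decomposing $(0,N)^d$ into the unit cubes $Q_j=j+(0,1)^d$, $j\in\{0,\dots,N-1\}^d$, identifies $L^2((0,N)^d)$ with $\ell^2(\{0,\dots,N-1\}^d)\otimes L^2((0,1)^d)$, and a direct computation on the dense subspace $\bigoplus_j C_c^\infty(Q_j)$ shows that under this identification $\Theta_{f,\Xi}=A\otimes I_{L^2((0,1)^d)}$; in particular $\Theta_f$ is bounded and $\|\Theta_f\|=\|A\|$. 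As $\Xi=(0,N)^d$ is a simple convex polytope with $\Xi+z=-\Xi-z$ for $z=-(N/2,\dots,N/2)$, Corollary~\ref{t2} supplies $b\in L^\infty(\R^d)$ with $\hat b|_\Omega=f$ and $c\,\|b\|_{L^\infty}\le\|\Theta_f\|=\|A\|$. Crucially, $c$ can be chosen independent of $N$: for the natural unitary $U_N\colon L^2((0,1)^d)\to L^2((0,N)^d)$ one has $U_N^*\,\Theta_{f,(0,N)^d}\,U_N=N^d\,\Theta_{g,(0,1)^d}$ with $g(t)=f(Nt)$, and a matching rescaling on the Fourier side shows that the optimal ratio $\|b\|_{L^\infty}/\|\Theta_f\|$ for $(0,N)^d$ equals that for $(0,1)^d$; hence the constant $c_d$ that Corollary~\ref{t2} attaches to the unit cube works for all $N$.

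\emph{Second} I would periodise $b$, losing only a dimensional constant. Fix $\phi\in C_c^\infty(\R^d)$ with $\supp\phi\subset(-\tfrac12,\tfrac12)^d$ and $\phi(0)=1$ — for instance a tensor product of a single one-variable bump — and set $\psi=\mathcal F^{-1}\phi$, a Schwartz function with $\hat\psi=\phi$. Define
$$b^\sharp=\sum_{n\in\Z^d}\psi(\,\cdot\,+n)\,b(\,\cdot\,+n),$$
which converges absolutely and obeys $\|b^\sharp\|_{L^\infty(\T^d)}\le\big(\sup_x\sum_n|\psi(x+n)|\big)\|b\|_{L^\infty}=:C_\psi\|b\|_{L^\infty}$, with $C_\psi$ depending only on $d$. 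Since $\psi b\in L^1(\R^d)$, Fubini gives $\widehat{b^\sharp}(k)=\widehat{\psi b}(k)=(\hat\psi*\hat b)(k)=(\phi*\hat b)(k)$ for every $k\in\Z^d$; and for $|k|_\infty\le N-1$ the test function $\xi\mapsto\phi(k-\xi)$ is supported in $k+(-\tfrac12,\tfrac12)^d\subset\Omega$, on which $\hat b=f$, so $\widehat{b^\sharp}(k)=\sum_m a_m\phi(k-m)=a_k\phi(0)=a_k$. Combining the two steps, $\|b^\sharp\|_{L^\infty(\T^d)}\le C_\psi\,c_d^{-1}\|A\|$, so $C_d:=C_\psi\,c_d^{-1}$ works.

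\emph{The main obstacle} is the uniformity of $C_d$ in $N$, which has to be controlled at the two points where $N$ enters. In Corollary~\ref{t2} this is handled by the dilation argument above, so that the growing size of $\Xi=(0,N)^d$ does no harm. In the periodisation it is handled by the fact that the target frequencies $k$ are unit-separated integers, so the single fixed cutoff $\phi$ of width $<1$ separates them for every $N$; the more naive approach of recovering the trigonometric polynomial $\sum_{|k|_\infty\le N-1}a_k e^{2\pi ik\cdot x}$ as $\check\chi*b$ for a smooth $\chi$ equal to $1$ near every lattice point of $(-N,N)^d$ would lose a factor $(\log N)^d$ via a Dirichlet-kernel bound on $\|\check\chi\|_{L^1}$, and is avoided for this reason.
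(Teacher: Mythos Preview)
Your proof is correct and follows the paper's strategy: realise $A$ as $\Theta_f$ on $(0,N)^d$ with $f=\sum a_k\delta_k$, apply Corollary~\ref{t2} (using dilation-invariance of its constant to get uniformity in $N$), then discretise back to $\ell^2$. The only variation is in that last step---the paper compresses the convolution operator $\Theta_{\hat b,\R^d}$ by an explicit isometry $I\colon\ell^2(\N^d)\to L^2(\R^d)$ built from translates of a bump $\varphi\in C_c^\infty((-\tfrac12,\tfrac12)^d)$ (yielding $\|T_{\tilde a}\|\le\|b\|_{L^\infty}$ directly, without your extra factor $C_\psi$), whereas you periodise $b$ to $L^\infty(\T^d)$ and read off Fourier coefficients; both are standard discretisations of the same object.
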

	\addtocounter{theorem}{-1}
}
For scalar Toeplitz matrices ($d=1$) this result is well-known \cite{BT01, NF03, Sara07, Vol04}, although not as firmly cemented in the literature as the Nehari theorem itself; see \cite[Ch.~V.2, V.8]{nik} for a proof based on Parrot's lemma and a discussion of the result's history. For $d=1$, the converse deduction of Theorem~\ref{t1alt} starting from Theorem~\ref{thm:blocktoep} can be found in \cite{carlsson2011truncated}.

The paper is laid out as follows. In Section~\ref{sec:background} we will give a more formal background and introduce necessary notation. We will also discuss the relationship between $\Gamma_{f, \Xi}$, Paley--Wiener spaces, and co-invariant subspaces of the Hardy spaces. In Section~\ref{secdist} we will prove approximation results for distribution symbols with respect to Hankel and Toeplitz operators, allowing us to reduce to smooth symbols. Section~\ref{sec:convex} briefly outlines what we need to know about convex sets and polytopes. In Section~\ref{sec:gdhankel} we prove Theorem~\ref{t1alt}, our Nehari theorem for Hankel operators. We also indicate how the proof extends to certain unbounded polyhedral domains. In Section~\ref{sec:gdtoep} our main result on Toeplitz operators is shown, Theorem~\ref{toeplitz}. Finally, Section~\ref{sec:blocktoep} gives the proof of Theorem~\ref{thm:blocktoep}.

\section{Further background and related results} \label{sec:background}

\subsection{Hankel operators on multi-variable Hardy spaces} \label{sec:multihankelx}
Let us begin by placing Hankel operators $\Gamma_f$ into the context of classical Hankel operators on Hardy spaces. As before, for $g \in L^2(\mathbb{R}^d)$, let $\hat{g} = \mathcal{F}g$ denote its Fourier transform,
$$\hat{g}(\xi) = \mathcal{F}g(\xi) = \int_{\mathbb{R}^d} g(x) e^{-2\pi i x \cdot \xi} \, dx, \quad \xi \in \mathbb{R}^d.$$
 For the inverse transform we write $\mathcal{F}^{-1}(g)=\check g$.
 The product domain Hardy space $H^2_d$ is the proper subspace of $L^2(\mathbb{R}^d)$ of functions whose Fourier transforms are supported in the cone $\overline{\R_+^d}$, $\R_+ = (0,\infty)$,
 $$ H^2_d = \left\{G \in L^2(\mathbb{R}^d) \, : \, \supp \hat{G} \subset \overline{\mathbb{R}_+^d}\right\}.$$
 We let $P_d \colon L^2(\mathbb{R}^d) \to H^2_d$ denote the orthogonal projection, and let $J \colon L^2(\mathbb{R}^d) \to L^2(\mathbb{R}^d)$ be the involution defined by $JG(x) = G(-x)$, $x \in \mathbb{R}$.

   Consider $\Gamma_f = \Gamma_{f, \Xi}$ for $\Xi = \mathbb{R}_+^d$ with $f \in L^2(\mathbb{R}_+^d)$. For a dense set of $g, h \in L^2(\mathbb{R}_+^d)$ we have that
\begin{equation} \label{eq:hankelform}
\langle \Gamma_f g, h \rangle_{L^2(\mathbb{R}_+^d)} = \langle \check f J\check{g}, \check h\rangle_{H^2_d}.
\end{equation}
It follows that the (possibly unbounded) operator $\Gamma_f \colon L^2(\R_+^d) \to L^2(\R_+^d)$ is unitarily equivalent to the small Hankel operator $Z_{\check{f}} \colon H^2_d \to H^2_d$,
$$Z_{\check{f}} G = P_d(\check{f} \cdot JG).$$
 Note that any $b$ such that $\hat{b}|_{\mathbb{R}_+^d} = f$ generates the same Hankel operator as $\check{f}$, $Z_b = Z_{\check{f}}$.

 To justify the above computation easily we assumed that $f \in L^2(\R_+^d)$. An approximation argument is needed to consider general symbols $f$, which may only be distributions in $\R_+^d$. We provide this later in Proposition~\ref{pextend}. We can then read off the boundedness of $\Gamma_f$ from the boundedness of the corresponding Hankel operator on $H^2_d$. When $d=1$ and $\Xi=\Omega=\R_+$, the analogue of Theorem~\ref{t1alt} is exactly the classical Nehari theorem. In higher dimensions the corresponding theorem is due to Ferguson--Lacey--Terwilleger  \cite{ferguson2002characterization,MR2491875}. In our notation, their results read as follows.
\begin{theorem}\label{lacey}
Suppose $\Xi=\Omega=\R_+^d$ and that $f$ is a distribution in $\R^d_+$, $f\in \ddd'(\R^d_+)$. Then $\Gamma_f \colon L^2(\mathbb{R}^d_+) \to L^2(\mathbb{R}^d_+)$ is bounded if and only if there exists a function $b\in L^\infty(\R^d)$ such that $\hat{b}|_{\R_+^d}=f$. Moreover, there exists a constant $c > 0$, depending on $d$, such that $b$ can be chosen to satisfy \begin{equation}\label{c}c\|b\|_{L^\infty}\leq\|\Gamma_f\|\leq\|b\|_{L^\infty}.\end{equation}
\end{theorem}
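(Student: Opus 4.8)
The statement is the Ferguson--Lacey--Terwilleger theorem \cite{ferguson2002characterization,MR2491875} transported into the language of general domain Hankel operators, so the plan is to run the dictionary of Section~\ref{sec:multihankelx} and then quote the hard harmonic analysis. The sufficiency direction, together with the upper bound $\|\Gamma_f\|\le\|b\|_{L^\infty}$, requires nothing beyond \eqref{eq:hankelform}: if $\hat b|_{\R^d_+}=f$ then $\Gamma_f$ is unitarily equivalent to $Z_b=P_d M_b J$, restricted to $H^2_d$, and since $J$ is an isometry, $P_d$ a projection, and multiplication by $b$ has norm $\|b\|_{L^\infty}$, we get $\|\Gamma_f\|\le\|b\|_{L^\infty}$. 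The content is the converse and the lower bound.

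For the necessity direction, first apply Proposition~\ref{pextend} to reduce to a symbol $f\in L^2(\R^d_+)$, so that \eqref{eq:hankelform} holds literally and $\Gamma_f$ really is the small Hankel operator $Z_{\check f}$ on $H^2_d$. A $d$-fold product of (weighted) Cayley transforms identifies $H^2_d$ with the polydisc Hardy space $H^2(\D^d)$ and carries $Z_{\check f}$ to a small Hankel operator there, i.e.\ to the bilinear form $(G,H)\mapsto\langle\psi,GH\rangle$ on $H^2(\D^d)\times H^2(\D^d)$ for a holomorphic symbol $\psi$ whose coefficients encode $f$. Boundedness of $\Gamma_f$ thus says exactly that $F\mapsto\langle\psi,F\rangle$ is bounded, with norm $\|\Gamma_f\|$, on the linear span of products $GH$, $G,H\in H^2(\D^d)$, measured in the $H^1(\D^d)$ norm. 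Now invoke the Ferguson--Lacey--Terwilleger weak factorization theorem: every $F\in H^1(\D^d)$ can be written $F=\sum_k G_kH_k$ with $\sum_k\|G_k\|_{H^2}\|H_k\|_{H^2}\le C_d\|F\|_{H^1}$. Hence the functional extends to all of $H^1(\D^d)$ with norm $\le C_d\|\Gamma_f\|$, so $\psi$ lies in the dual of $H^1(\D^d)$ with comparable norm. Using that this dual is realized with equivalent norms as $P_+L^\infty(\T^d)$ (the polydisc analogue of the one-variable identity $\BMOA=P_+L^\infty$), write $\psi=P_+b_0$ with $b_0\in L^\infty(\T^d)$ and $\|b_0\|_{L^\infty}\le C_d\|\Gamma_f\|$; since $Z_{b_0}=Z_\psi$, transporting back through the Cayley transforms yields $b\in L^\infty(\R^d)$ with $\hat b|_{\R^d_+}=f$ and $\|b\|_{L^\infty}\le C_d\|\Gamma_f\|$, which is \eqref{c} with $c=1/C_d$.

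The main obstacle is the weak factorization theorem itself, the deep content of \cite{ferguson2002characterization,MR2491875}: it rests on the theory of product $\BMO$, Journ\'e's covering lemma, and, for $d\ge3$, an intricate induction on dimension, and it has no soft substitute --- in particular the commutant lifting / And\^o dilation route that proves Nehari's theorem for $d=1$ breaks down in the polydisc for $d\ge3$, and already for $d=2$ does not directly give the Nehari conclusion. A secondary point requiring care is the genuinely ``Nehari'' (rather than ``Fefferman'') half, namely the $L^\infty$-realization of the dual symbol, together with the bookkeeping in the Fourier-analytic dictionary relating $\Gamma_f$ on $L^2(\R^d_+)$, the small Hankel operator on $H^2_d$, and Hankel bilinear forms on $H^2(\D^d)$ --- including making \eqref{eq:hankelform} rigorous for distributional $f$ via Proposition~\ref{pextend}.
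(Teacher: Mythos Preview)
Your approach is essentially the paper's: both reduce to nice symbols via Proposition~\ref{pextend}, invoke Ferguson--Lacey--Terwilleger, and read off the bound (the paper stays on $\R^d$ and quotes the Nehari form of the result directly, whereas you detour through the polydisc via Cayley transforms and unpack it as weak factorization plus Hahn--Banach, but this is cosmetic). The one step the paper makes explicit that you only flag is the passage back from approximants to the distributional symbol: Proposition~\ref{pextend} does not literally ``reduce to $f\in L^2$'' --- the paper instead applies the cited theorem to each $f_n$ to obtain $b_n\in L^\infty$ with $\|b_n\|_{L^\infty}\le C\|\Gamma_f\|$, extracts a weak-star limit $b$ by Alaoglu, and then checks $(b,\hat\varphi)=\lim(b_{n_k},\hat\varphi)=\lim(f_{n_k},\varphi)=(f,\varphi)$ for $\varphi\in C_c^\infty(\R_+^d)$ to conclude $\hat b|_{\R_+^d}=f$.
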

For $d > 1$ it is not possible to take $c=1$ in \eqref{c}, see for example \cite{ortega2012lower}.

\subsection{Hankel operators on bounded domains}
We now discuss bounded domains $\Xi$, the setting of our main result. The only convex bounded domains in $\R$ are the intervals $I\subset \R$. Translations, dilations, and reflections carry the operator $\Theta_{f,I}$ onto $\Gamma_{\tilde{f},J}$, where $J \subset \mathbb{R}$ is any other interval and $\tilde{f}$ arises from transforming $f$ appropriately. In one variable it thus suffices to consider operators $\Gamma_{f,(0,1)}$ where $\Xi = (0,1)$. Rochberg \cite{rochberg1987toeplitz} called these operators Hankel operators on the Paley-Wiener space and proved Theorem~\ref{t1alt} in the one-dimensional case.

In the same article \cite{rochberg1987toeplitz}, it is posed as an open problem to characterize the bounded Hankel operators $\Gamma_{f,\Xi}$ when $\Xi$ is a disc in $\mathbb{R}^2$. We are not able to settle this question, but Theorem~\ref{t1alt} does provide the answer when $\Xi = (0,1)^d$ is a cube in $\R^d$. As we will see, the Hankel operators $\Gamma_{f, (0,1)^d}$ constitute a natural generalization of the Hankel operators on the Paley-Wiener space. On a technical level, the reason that we are able to prove Theorem~\ref{t1alt} when $\Xi$ is a simple convex polytope, but not when $\Xi$ is a ball, is that we rely on Theorem~\ref{lacey}. In applying Theorem~\ref{lacey} to our situation, the corners of the boundary of $\Xi$ are actually of help rather than hindrance. We consider the case of a ball to be an interesting open problem for which we do not dare to make a firm conjecture. In view of Fefferman's disproof of the disc conjecture \cite{fefferman1971multiplier}, Nehari theorems might turn out to be quite different for balls and polytopes.

\subsection{Toeplitz operators}
When $d=1$ and $\Xi=\R_+$, $\Omega = \R$, the operators $\Theta_f$ are known as Wiener-Hopf operators \cite[Ch. 9]{bottcher2013analysis}. Analogously with Hankel operators, these can be shown to be unitarily equivalent to Toeplitz matrix operators on $\ell^2(\N)$. In this case the boundedness characterization is easy to both state and prove, \begin{equation}\label{i9}\|\Theta_f\|=\|\hat f\|_{L^\infty}.\end{equation}
In Theorem~\ref{toeplitz} we extend \eqref{i9} to Toeplitz operators $\Theta_{f, \Xi}$ for a class of ``cone-like'' domains $\Xi \subset \R^d$, for which $\Omega = \Xi - \Xi = \R^d$.

\subsection{Truncated correlation operators}\label{subsec:TCO}
For open connected sets $\Xi, \Upsilon \subset \R^d$ it is also convenient to introduce the more general ``truncated correlation operators'' $\Psi_{f, \Upsilon, \Xi} \colon L^2(\Upsilon)\rightarrow L^2(\Xi)$, defined by
$$\Psi_{f}(g)({x})=\int_{\Upsilon}f({x}+{y}) g({y}) ~d{y},\quad {x}\in\Xi,$$
 where $f$ lives on $\Omega=\Xi+\Upsilon$. This class of operators includes both general domain Hankel and Toeplitz operators, by letting $\Upsilon = \Xi$ and $\Upsilon = -\Xi$, respectively.

 For our purposes, general truncated correlation operators will only appear in intermediate steps toward proving the main results, but they also carry independent interest. They were introduced in \cite{andersson2015general}, where their finite rank structure was investigated. In \cite{andersson2017fixed} it was shown that they have a fundamental connection with frequency estimation on general domains, motivating the practical need for understanding such operators not only on domains of simple geometrical structure. In \cite{andersson2016structure} it is explained how one may infer certain results for the integral operators $\Psi_f$ from their discretized matrix counterparts. We warn the reader that in naming the operators $\Gamma_f$, $\Theta_f$, and $\Psi_f$ we have slightly departed from previous work, reserving the term (general domain) Hankel operator for truncated correlation operators of the form $\Psi_{f,\Xi,\Xi}$.

\subsection{Hankel operators on multi-variable Paley--Wiener spaces} \label{subsec:PW}
Another viewpoint is offered through co-invariant subspaces of the Hardy spaces $H^2_d$. For a domain $\Xi \subset \R^d$, let $\PW_\Xi$ denote the subspace of $L^2(\R^d)$ of functions with Fourier transforms supported in $\Xi$,
$$\PW_\Xi = \{G \in L^2(\R^d) \, : \, \supp \hat{G} \subset \overline{\Xi} \}.$$
In the classical case $\Xi = (0,1) \subset \R$, note that
$$\PW_{(0,1)} = H^2_1 \ominus \{G \in H^2_1 \, : \, \supp \hat{G} \subset [1,\infty)\} = H^2_1 \ominus \theta H^2_1,$$
where
$$\theta(x) = e^{i2\pi x}, \quad x \in \mathbb{R}.$$
Hence $\PW_{(0,1)}$ is the ortho-complement (in $H^2_1$) of $\theta H^2_1$, the shift-invariant subspace of $H^2_1$ with inner factor $\theta$. This space is usually denoted $K_\theta$,
$$\PW_{(0,1)} = K_\theta := (\theta H^2_1)^\perp.$$
By a calculation similar to \eqref{eq:hankelform} we see that
$\Gamma_{f, (0,1)}$ is unitarily equivalent to the compression of the Hankel operator $Z_{\check f}$ to $\PW_{(0,1)}$,
$$\Gamma_{f, (0,1)} \simeq P_{\PW_{(0,1)}} Z_{\check{f}}|_{\PW_{(0,1)}},$$
where $P_{\PW_{(0,1)}} \colon H^2_1 \to \PW_{(0,1)}$ denotes the orthogonal projection onto $\PW_{(0,1)}$. Such \textit{truncated Toeplitz and Hankel operators} are now very well studied on general $K_\theta$-spaces \cite{BBK11, BCFMT, Bess152, Bess15, CGRW10, FR75, Nik86, Pel88, Sara07}.

In the case of the cube $\Xi = (0,1)^d \subset \R^d$, $d > 1$, the Hankel operator $\Gamma_{f,\Xi}$ may, just as for $d=1$, be understood as the compression of a Hankel operator to a co-invariant subspace of $H^2_d$. Namely,
$$\PW_{(0,1)^d} = \{G \in H_d^2 \, : \, \supp \hat{G} \subset [0,1]^d\} = \{G \in H_d^2 \, : \, \supp \hat{G} \subset \overline{\R_+^d} \setminus (0,1)^d\}^\perp.$$
If $G \in H_d^2 \cap L^\infty(\R^d)$, it is clear that $G\PW_{(0,1)^d}^\perp \subset \PW_{(0,1)^d}^\perp$, since
$$\mathcal{F}(GH)(\xi) = \int_{\R_+^d} \hat{G}(y) \hat{H}(\xi-y) \, dy = 0, \quad H \in \PW_{(0,1)^d}^\perp, \; \xi \in [0,1]^d.$$
Hence $\PW_{(0,1)^d}^\perp \subset H^2_d$ is an invariant subspace (under multiplication by bounded holomorphic functions), and as before we have that
$$\Gamma_{f, (0,1)^d} \simeq P_{\PW_{(0,1)^d}} Z_{\check{f}}|_{\PW_{(0,1)^d}},$$
where $P_{\PW_{(0,1)^d}} \colon H^2_d \to \PW_{(0,1)^d}$ denotes the orthogonal projection onto $\PW_{(0,1)^d}$.

Finally, let us briefly discuss the viewpoint of weak factorization. The Hardy space $H^1_d$ is defined as the closure of $\mathcal{F}^{-1}(C_c^\infty(\R_+^d))$ in $L^1(\R^d)$. Similarly, we define $\PW^1_\Xi$ as the closure of $\mathcal{F}^{-1}(C_c^\infty(\Xi))$ in $L^1(\R^d)$. As is well known, see for example \cite[Theorem 6.4]{lacey2007lectures}, Theorem~\ref{lacey} is equivalent to the fact that $H^1_d$ is the projective tensor product of two copies of $H^2_d$,
\begin{equation} \label{eq:weakfac}
H^1_d = H^2_d \odot H^2_d,
\end{equation}
with equivalence of norms. Here the projective tensor product norm on $X \odot X$, $X$ a Banach space of functions, is given by
$$\|G\|_{X \odot X} = \inf \left \{ \sum_j \|G_j\|_{X} \|H_j\|_{X} \, : \, G = \sum_j G_j H_j, \; G_j, H_j \in X \right\},$$
$X \odot X$ being defined as the completion of finite sums $\sum_j G_j H_j$ in this norm.

The reason that Theorem~\ref{lacey} is equivalent to \eqref{eq:weakfac} is the following: by \eqref{eq:hankelform}, $\Gamma_{f, \R_+^d}$ is bounded if and only if
$$|\langle \check{f}, GH \rangle_{H^2_d} | \leq C\|G\|_{H^2_d} \|H\|_{H^2_d},$$
which means precisely that $\check{f}$ induces a bounded functional on $H^2_d \odot H^2_d$, $\check{f} \in (H^2_d \odot H^2_d)^*$. On the other hand, the existence of $b \in L^\infty(\R^d)$ such that $\hat b|_{\R_+^d} = f|_{\R_+^d}$, so that $\langle \check{f}, GH \rangle_{H^2_d} = \langle b, GH \rangle_{H^2_d}$, $G,H \in H^2_d$, means, by the Hahn--Banach theorem, precisely that $\check{f} \in (H_d^1)^*$.

Theorem~\ref{t1alt} yields a similar weak factorization theorem for Paley--Wiener spaces. We postpone the proof to Section~\ref{sec:gdhankel}, mentioning only that corresponding weak factorization for $K_\theta$-spaces plays an important role in \cite{BBK11} and \cite{Bess152}.
{
	\renewcommand{\thetheorem}{\ref{cor:fact}}
\begin{corollary}
	Let $\Xi$ be a simple convex polytope, and let $\Omega = 2\Xi$. Then
	$$PW_{\Omega}^1 = \PW_{\Xi}\odot \PW_{\Xi}.$$
	The norms of these Banach spaces are equivalent.
\end{corollary}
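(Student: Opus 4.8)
The plan is to derive Corollary~\ref{cor:fact} from Theorem~\ref{t1alt} by a Hahn--Banach duality argument, exactly parallel to the way Theorem~\ref{lacey} is equivalent to the weak factorization \eqref{eq:weakfac}. All of the analytic difficulty is absorbed into Theorem~\ref{t1alt}; what remains is a functional-analytic dictionary.

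First I would produce the natural map and identify the relevant duals. If $G,H\in\PW_\Xi$ have $\hat G,\hat H\in C_c^\infty(\Xi)$, then $GH\in\mathcal F^{-1}\bigl(C_c^\infty(\Xi+\Xi)\bigr)=\mathcal F^{-1}\bigl(C_c^\infty(\Omega)\bigr)\subset\PW^1_\Omega$, with $\|GH\|_{L^1}\le\|G\|_{L^2}\|H\|_{L^2}$; passing to finite sums $\sum_j G_jH_j$ and completing, one gets a contraction $\iota\colon\PW_\Xi\odot\PW_\Xi\to\PW^1_\Omega$. Since $\PW^1_\Omega$ is by definition a closed subspace of $L^1(\R^d)$, the Hahn--Banach theorem and Parseval's formula identify $(\PW^1_\Omega)^\ast$ with $L^\infty(\R^d)/\{b\in L^\infty:\hat b|_\Omega=0\}$, i.e.\ with the space of $f\in\ddd'(\Omega)$ of the form $\hat b|_\Omega$, $b\in L^\infty(\R^d)$, normed by $\inf\{\|b\|_{L^\infty}:\hat b|_\Omega=f\}$. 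On the other side, a computation as for \eqref{eq:hankelform}, with Proposition~\ref{pextend} covering distributional symbols, expresses $\langle\Gamma_fg,h\rangle_{L^2(\Xi)}$ as a fixed pairing of $\check f$ against a product $\mathcal G\mathcal H$ of two elements $\mathcal G,\mathcal H\in\PW_\Xi$ (concretely $\mathcal G=\overline{\hat g}$, $\mathcal H=\check h$), where $\|\mathcal G\|_{L^2}=\|g\|_{L^2(\Xi)}$, $\|\mathcal H\|_{L^2}=\|h\|_{L^2(\Xi)}$, and $\mathcal G,\mathcal H$ run over dense subsets of $\PW_\Xi$. By the universal property of the projective tensor norm this identifies $(\PW_\Xi\odot\PW_\Xi)^\ast$ with $\{f\in\ddd'(\Omega):\Gamma_f\text{ bounded}\}$, normed by $\|\Gamma_f\|$. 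Under both identifications the adjoint $\iota^\ast\colon(\PW^1_\Omega)^\ast\to(\PW_\Xi\odot\PW_\Xi)^\ast$ becomes the map $f\mapsto f$.

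Now Theorem~\ref{t1alt} says exactly that this map is a Banach-space isomorphism: the inclusion $\{f=\hat b|_\Omega\}\subseteq\{f:\Gamma_f\text{ bounded}\}$ together with $\|\Gamma_f\|\le\|b\|_{L^\infty}$ is the boundedness of $\iota^\ast$, while the reverse inclusion together with the lower bound $c\|b\|_{L^\infty}\le\|\Gamma_f\|$ for a suitable $b$ is the surjectivity and bounded-belowness of $\iota^\ast$. I would then finish with the standard duality facts: $\iota^\ast$ surjective forces $\iota$ to be bounded below, hence injective with closed range; $\iota^\ast$ injective forces $\iota$ to have dense range; a bounded injection with closed and dense range is onto, and by the open mapping theorem its inverse is bounded. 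This yields $\PW^1_\Omega=\PW_\Xi\odot\PW_\Xi$ with equivalent norms, proving the corollary.

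The only step needing genuine (if routine) care is the density of $\Ran\iota$ in $\PW^1_\Omega$ --- equivalently the injectivity of $\iota^\ast$, equivalently the injectivity of the symbol map $f\mapsto\Gamma_{f,\Xi}$ on $\ddd'(\Omega)$. This I would check directly: if $\langle\Gamma_fg,h\rangle=0$ for all $g,h\in L^2(\Xi)$, then testing with $g=\mathbf 1_A$, $h=\mathbf 1_B$ for small boxes $A,B\subset\Xi$ makes all the averages $\int_B\!\int_A f(x+y)\,dy\,dx$ vanish; since $A+B$ can be made an arbitrarily small box around any point of $\Omega=\Xi+\Xi$, a Lebesgue-differentiation (or mollification) argument forces $f=0$. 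I expect this point, together with the routine translation of Fourier conventions in the dictionary above, to be the only places requiring attention; the substantive content is carried entirely by Theorem~\ref{t1alt}.
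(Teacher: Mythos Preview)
Your overall strategy coincides with the paper's: form the contraction $\iota\colon\PW_\Xi\odot\PW_\Xi\to\PW^1_\Omega$, identify the two duals, use Theorem~\ref{t1alt} to show $\iota^*$ is an isomorphism, and conclude the same for $\iota$. The paper executes this via Proposition~\ref{lem:banach} together with an explicit anti-linear involution to track conjugations (your formulas $\mathcal G=\overline{\hat g}$, $\mathcal H=\check h$ are off by a conjugate from the correct $\check g$, $\check{\bar h}$, but that is cosmetic).

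There is, however, a genuine gap. You assert that ``the universal property of the projective tensor norm'' identifies $(\PW_\Xi\odot\PW_\Xi)^*$ with $\{f\in\ddd'(\Omega):\Gamma_f\text{ bounded}\}$. The universal property only says this dual consists of bounded bilinear forms on $\PW_\Xi\times\PW_\Xi$ factoring through multiplication; it does \emph{not} say every such form is induced by a distribution on $\Omega$. For that you must know that $C_c^\infty(\Omega)\ni\phi\mapsto\check\phi\in\PW_\Xi\odot\PW_\Xi$ is well-defined and continuous, i.e.\ that every test function $\phi$ on $\Omega$ admits a decomposition $\phi=\sum_k g_k*h_k$ with $g_k,h_k\in L^2(\Xi)$ and $\sum_k\|g_k\|\,\|h_k\|$ controlled by a seminorm of $\phi$. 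This is not automatic (a single $g*h$ cannot realize a generic $\phi$, since $\hat h$ has zeros), and the paper devotes Proposition~\ref{lem:banach} to proving it, via a partition-of-unity reduction to the cube followed by a Wiener-algebra Fourier-series argument. Without this ingredient you cannot attach a symbol to an arbitrary $\mu\in(\PW_\Xi\odot\PW_\Xi)^*$, so Theorem~\ref{t1alt} cannot be invoked and the surjectivity of $\iota^*$---hence the injectivity of $\iota$---remains unproved. The direction you flag as delicate (injectivity of $f\mapsto\Gamma_f$, equivalently density of $\Ran\iota$) is actually the easy one; the substantive work lies on the other side.
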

	\addtocounter{theorem}{-1}
}

\subsection{Brief historical overview}\label{secBHO}
Z. Nehari published his famous theorem in 1957 \cite{Nehari}, inspiring the search for analogous statements in other contexts; positive results are themselves often referred to as Nehari theorems. The most natural inquiries are perhaps those related to Hankel operators on Hardy spaces of several variables.
Nehari's theorem for the Hardy space of the unit ball was proven by Coifman, Rochberg and Weiss in 1976 \cite[Thm. VII]{coifman1976factorization}, but this setting is rather different from the one considered in this paper. 

For the product domain Hardy space $H^2_d$, Hankel operators can be defined by either projecting on $H^2_d$ or on the larger space $L^2(\R^d) \ominus H^2_d$. The first option leads to the ``small'' Hankel operators considered in Section~\ref{sec:multihankelx}, while the second type of operator is commonly referred to as a ``big'' Hankel operator. In the notation of Section~\ref{subsec:TCO}, a small Hankel operator is an operator $\Psi_{f,\R_+^d,\R_+^d}=\Gamma_{f,\R_+^d}$, whereas big Hankel operators are of the form $\Psi_{f,\R_+^d,\R^d\setminus\overline{\R_+^d}}$. 
When transferred to operators on the Hardy space of the polydisc, small Hankel operators correspond, in the standard basis, to infinite matrices with a certain block Hankel structure (cf. Section~\ref{sec:blocktoep}). 

The big Hankel operators were extensively studied by Cotlar and Sadosky. In particular, boundedness of the big Hankel operators was characterized in terms of certain $\BMO$ type estimates in \cite{cotlar1994nehari}. Small Hankel operators were investigated by Janson and Peetre \cite{janson1988paracommutators} in 1988. They introduced ``generalized Hankel and Toeplitz operators'' as particular cases of a more general class of pseudo-differential operators called paracommutators. In their terminology, an operator of the form $\Psi_{f,\Xi,\Upsilon}$ is a generalized Hankel operator if $\Xi$ and $\Upsilon$ are open cones and $\overline{\Xi} \cap(-\overline{\Upsilon})=\{0\}$, whereas it is called Toeplitz if $\Xi \cap (-\Upsilon) \neq \emptyset.$ Hence the general domain Hankel operators $\Gamma_{f,\Xi}$ are generalized Hankel operators a l\'{a} Janson--Peetre whenever $\Xi$ is a cone with mild restrictions, while $\Theta_{f,\Xi}$ is a generalized Toeplitz operator a l\'{a} Janson--Peetre for every open cone $\Xi$. In the Toeplitz case, a full boundedness characterization is given in \cite{janson1988paracommutators}. In the Hankel case, only sufficient conditions for boundedness and Schatten class membership are provided, in terms of $\BMO$ and Besov spaces, respectively.

 As previously mentioned, R. Rochberg considered Hankel operators for bounded domains in 1987 \cite{rochberg1987toeplitz}, studying the case of a finite interval in one dimension. Furthermore, he posed as an open problem to understand the case when $\Xi \subset \R^2$ is a disc. In this latter setting, L. Peng \cite{lishong1987hankel} characterized when $\Gamma_{f, \Xi}$ belongs to the Schatten class $S_p$, for $1 \leq p \leq 2$, in terms of certain Besov spaces adapted to the disc. L. Peng also carried out a similar study \cite{peng1989hankel} for the case of the multidimensional cube, $\Xi = (-1,1)^d$, describing membership in $S_p$ for all $p$, $0 < p < \infty$, as well as giving a sufficient condition for boundedness.

Since then it seems that the field did not see progress until the results of Ferguson--Lacey--Terwilleger \cite{ferguson2002characterization,MR2491875} settled the issue of boundedness of small Hankel operators.

\section{Distribution symbols}\label{secdist}

Let $\Xi,\Upsilon \subset \R^d$ be any open connected sets and let $f\in \ddd'(\Omega)$ be a distribution on $\Omega$, $\Omega=\Xi+\Upsilon$.  We follow the notation of \cite{hormander} in our use of distributions.  We then define the truncated correlation operator $\Psi_f$ as an operator  $\Psi_{f, \Upsilon, \Xi}:C^\infty_c(\Upsilon)\rightarrow C^\infty(\Xi)$ by the formula
$$\Psi_f(\varphi)(x)=({f,T_x\varphi}), \quad x \in \Xi,$$
where $(f,\varphi)$ denotes the action of $f$ on $\varphi$\footnote{We reserve the notation $\scal{f,\varphi}$ for scalar products which are anti-linear in the second entry.} and
$$T_x\varphi(\cdot)=\varphi(\cdot-x).$$
Since $T_x\varphi$ is compactly supported in $\Omega$ for $x \in \Xi$, it follows that $\Psi_f(\varphi)$ this is well-defined and smooth in $\Xi$ (see e.g. \cite[Theorem 4.1.1]{hormander}). Since $C^\infty_c(\Upsilon)$ is dense in $L^2(\Upsilon)$, $\Psi_f$ gives rise to a densely defined operator on the latter space which extends to a bounded operator $\Psi_f \colon L^2(\Upsilon) \to L^2(\Xi)$ if and only if $$\|\Psi_f\|=\sup\left\{\frac{\|\Psi_f(\varphi)\|_{L^2(\Xi)}}{\|\varphi\|_{L^2(\Upsilon)}}:~\varphi\in C_c^\infty(\Upsilon), ~\varphi\neq 0\right\}<\infty.$$
It is clear that $\Psi_f(\varphi)(x) = \int f(x+y)\varphi(y) \, dy$ whenever $f \in L^1_{\textrm{loc}}(\Omega)$. By slight abuse of notation, we write the action of $\Psi_f$ in this way even when $f$ is not locally integrable.

The central question in this paper is the following: \textit{for which domains $\Upsilon$ and $\Xi$ is the boundedness of $\Psi_f$ equivalent to the existence of a function $b\in L^\infty(\R^d)$ such that $\hat b|_{\Omega}=f$?}  Some care must be taken in interpreting this question. For example, the prototypical example of a bounded Hankel operator is the Carleman operator
$$\Gamma_{1/x, \R_+} = \Psi_{1/x, \R_+, \R_+}.$$
The symbol $f(x) = \frac{1}{x}\chi_{\R_+}(x)$ is in this case not a tempered distribution on $\mathbb{R}$ (so the meaning of $\check f$ is unclear) -- it is, however, the restriction of the tempered distribution $\pv \frac{1}{x}$ to $\R_+$. An example with a delta function makes it clear that it is not necessary for $f$ to be locally integrable in $\Omega$ either.

We first record the answer to our question in the trivial direction.

\begin{proposition}\label{ptrivial}
Consider any connected open domains $\Xi,$ $\Upsilon \subset \R^d$, with associated domain $\Omega=\Upsilon+\Xi$. Let $b\in L^\infty(\R^d)$ be given and suppose $f=\hat{b}|_{\Omega}$. Then $\Psi_f \colon L^2(\Upsilon) \to L^2(\Xi)$ is bounded and \begin{equation}\label{inequality}\|\Psi_f\|\leq\|b\|_{L^\infty}.\end{equation}
\end{proposition}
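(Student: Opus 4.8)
The plan is to establish the bound $\|\Psi_f\| \le \|b\|_{L^\infty}$ by writing $\Psi_f$ as a composition of operators whose norms are individually controlled. The key observation is that the truncated correlation operator should be realized, after Fourier transform, as a ``localize--multiply--localize'' sandwich: restriction to $\Upsilon$, multiplication by $b$ (or its reflection), and restriction to $\Xi$, with the only nontrivial factor being the multiplication operator, whose norm on $L^2(\R^d)$ is exactly $\|b\|_{L^\infty}$.

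Concretely, first I would reduce to the case of a \emph{nice} symbol: assume $b \in L^\infty(\R^d)$ with, say, $b \in L^1(\R^d)$ as well (e.g. mollify and truncate $b$), so that $f = \hat b$ is an honest continuous function on $\R^d$ and $\Psi_f(\varphi)(x) = \int_\Upsilon \hat b(x+y)\varphi(y)\,dy$ makes literal sense for $\varphi \in C_c^\infty(\Upsilon)$. For such $\varphi, \psi \in C_c^\infty$ I would compute the bilinear form $\int_\Xi \Psi_f(\varphi)(x)\overline{\psi(x)}\,dx$ by Fubini and Parseval, converting the double integral $\int\int \hat b(x+y)\varphi(y)\overline{\psi(x)}\,dy\,dx$ into an integral over frequency space. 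Writing $g = \check{\varphi}$ extended by zero (so $g \in L^2(\R^d)$ with $\hat g$ supported appropriately) and similarly $h$ for $\psi$, this form becomes $\langle b \cdot J g', h'\rangle_{L^2(\R^d)}$ for suitable $L^2$ functions $g', h'$ built from $\varphi,\psi$ by inverse Fourier transform and the reflection $J$, with $\|g'\|_{L^2} = \|\varphi\|_{L^2(\Upsilon)}$ and $\|h'\|_{L^2} = \|\psi\|_{L^2(\Xi)}$ by Plancherel. Then
$$\left| \int_\Xi \Psi_f(\varphi)(x)\overline{\psi(x)}\,dx \right| = |\langle b\, Jg', h'\rangle| \le \|b\|_{L^\infty}\|g'\|_{L^2}\|h'\|_{L^2} = \|b\|_{L^\infty}\|\varphi\|_{L^2(\Upsilon)}\|\psi\|_{L^2(\Xi)},$$
which gives \eqref{inequality} on the dense set and hence the bounded extension. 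Finally I would remove the auxiliary integrability assumption on $b$ by a limiting argument: approximate a general $b \in L^\infty$ by $b_n \to b$ (in $L^1_{\mathrm{loc}}$ or weak-$*$, say via dilated mollifiers times cutoffs) with $\|b_n\|_{L^\infty} \le \|b\|_{L^\infty}$; then $\hat b_n \to \hat b = f$ in $\ddd'(\Omega)$, so $\Psi_{f_n}(\varphi)(x) \to \Psi_f(\varphi)(x)$ pointwise (indeed uniformly on compacts) for each fixed $\varphi \in C_c^\infty(\Upsilon)$, and the uniform bound $\|\Psi_{f_n}\| \le \|b\|_{L^\infty}$ passes to the limit.

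The only real point requiring care — the ``main obstacle,'' though it is mild — is the bookkeeping in the Fourier-side computation: correctly tracking the reflection $J$, the conjugations, and the domains of support so that the bilinear form genuinely collapses to $\langle b\, Jg', h'\rangle$ over \emph{all} of $\R^d$ (the restrictions to $\Xi$ and $\Upsilon$ being absorbed precisely because $\varphi, \psi$ are supported there, so extending by zero costs nothing in $L^2$ norm). This is exactly the computation \eqref{eq:hankelform} in the special case $\Upsilon = \Xi = \R_+^d$, and here one simply does not use any relationship between $\Xi$, $\Upsilon$ and half-spaces — the inequality direction is insensitive to the geometry, which is why it holds for arbitrary connected open $\Xi, \Upsilon$. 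One should also make sure the density of $C_c^\infty(\Upsilon)$ in $L^2(\Upsilon)$ (and of $C_c^\infty(\Xi)$ in $L^2(\Xi)$) is invoked correctly to pass from the bilinear-form bound on test functions to the operator norm bound.
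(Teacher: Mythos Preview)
Your approach is correct and essentially the same as the paper's: both realize $\Psi_f$ on the Fourier side as multiplication by $b$ sandwiched between isometries, the paper recording this as the single operator identity $\Psi_f(\varphi) = \f M_b J \f^{-1}\varphi\big|_\Xi$ for $\varphi \in C_c^\infty(\Upsilon)$ and reading off the bound immediately. Your reduction to $b \in L^1 \cap L^\infty$ and subsequent limiting argument are unnecessary---the identity holds directly for arbitrary $b \in L^\infty$ because the distributional pairing $(\hat b, T_x\varphi) = (b, \widehat{T_x\varphi})$ is already an honest integral when $\varphi$ is a test function---but the extra step is harmless.
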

\begin{proof} For $\varphi\in C^\infty_c(\Upsilon)$ we have that
\begin{equation*} 
\Psi_{f}(\varphi)=\f M_b J\f^{-1} \varphi|_{\Xi},
\end{equation*}
where $M_b$ is the operator of multiplication by $b$. The statement is obvious from here.
\end{proof}
Next we establish two technical results on the approximation of distribution symbols by smooth compactly supported functions, Propositions~\ref{pextend} and \ref{pextend2}. They will help us to overcome the technical issues mentioned earlier, in particular allowing us to deduce Theorem~\ref{lacey} from the corresponding statements in \cite{ferguson2002characterization, MR2491875}.

Given open connected domains $\Xi,$ $\Upsilon \subset \R^d$, let $\left(\Upsilon_n\right)_{n=1}^\infty$ be an increasing sequence of connected open subdomains $\Upsilon_n \subset \Upsilon$ such that $$\dist(\Upsilon_n,\partial\Upsilon)>1/n, \quad \cup_{n=1}^\infty \Upsilon_n=\Upsilon.$$
Note that $\Omega_n = \Upsilon_n + \Xi$ is also increasing and satisfies
$$\dist(\Omega_n,\partial\Omega)>1/n, \quad \cup_{n=1}^\infty \Omega_n = \Omega.$$

 Let $\psi\in C^\infty_c(\R^d)$ be a fixed non-negative function with compact support in the ball $B(0,1/2)$ such that $\int_{\R^d} \psi(x) \, dx =1$. For $n\geq 1$ let
 $$\psi_n(x)={n^d}\psi(nx),$$
  so that $(\psi_n)_{n=1}^\infty$ is an approximation of the identity. Since $f \in \ddd'(\Omega)$ and $\supp \psi_n \subset B(0,1/2n)$, the convolution $f*\psi_n$ is well-defined as a function in $C^\infty(\Omega_{2n})$. Let $\rho_n$ be a smooth cut-off function which is $1$ in a neighborhood of $\overline{\Omega_n}$ but zero in a neighborhood of $\Omega_{2n}^c$, and note that $\rho_n(f*\psi_n)$ then naturally defines a function in $C^\infty(\R^n)$. Finally, for a non-negative function $\eta \in C^\infty_c(\R^d)$ with $\|\eta\|_{L^2} = 1$, let $\omega = \eta * \tilde{\eta}$, where $\tilde{\eta}(x) = \eta(-x)$. Then $\omega\in C^\infty_c(\R^d)$ and $$\omega(0)= \|\hat \omega\|_{L^1} = 1.$$ Let $\omega_n(x)=\omega(x/n)$. We introduce $$f_n=\omega_n\rho_n (f*\psi_n)$$ as an approximant of $f$, where the role of $\omega_n$ is to enforce compact support in case $\Omega$ is unbounded. By construction, $f_n \in C_c^\infty(\Omega)$ and it is straightforward to check that $f_n \to f$ in $\ddd'(\Omega)$. As for $\Psi_{f_n, \Upsilon_n, \Xi}$, we have the following result.

\begin{proposition}\label{pextend}
Let $\Xi,$ $\Upsilon$ be connected open domains, $\Omega=\Upsilon+\Xi$, and suppose $f\in \ddd'(\Omega)$. For $n \geq 1$, let $\Omega_n=\Upsilon_n+\Xi$ and $f_n$ be constructed as above. Then  $$\|\Psi_{f_n,\Upsilon_n,\Xi}\| \leq \|\Psi_{f,\Upsilon,\Xi}\|.$$
\end{proposition}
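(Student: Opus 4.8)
The plan is to show that for any test function $\varphi \in C_c^\infty(\Upsilon_n)$, the function $\Psi_{f_n, \Upsilon_n, \Xi}(\varphi)$ can be realized — up to an arbitrarily small $L^2$ error — as $\Psi_{f, \Upsilon, \Xi}$ applied to a test function whose $L^2(\Upsilon)$ norm is at most $\|\varphi\|_{L^2(\Upsilon_n)}$. The natural candidate is a suitably mollified, truncated, and weighted version of $\varphi$, mirroring the construction of $f_n$ from $f$. Concretely, since $f_n = \omega_n \rho_n (f * \psi_n)$, I would first compute, using Fubini and the symmetry of convolution, that for $x \in \Xi$
\begin{equation*}
\Psi_{f_n, \Upsilon_n, \Xi}(\varphi)(x) = \bigl(f, T_x\bigl(\psi_n * (\omega_n \rho_n \cdot T_{-x}^{-1}\cdots)\bigr)\bigr),
\end{equation*}
i.e. the mollifier and cutoff can be moved off $f$ and onto $\varphi$. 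The point is that $\rho_n \equiv 1$ on $\overline{\Omega_n} \supset \Upsilon_n + \Xi$, so on the relevant range of arguments the cutoff $\rho_n$ is invisible; and $\omega_n(0) = 1$ with $\omega_n$ slowly varying, so multiplication by $\omega_n$ is close to the identity on the scale of $\varphi$'s support as $n$ grows. Thus $\Psi_{f_n,\Upsilon_n,\Xi}(\varphi) \approx \Psi_{f,\Upsilon,\Xi}(\varphi * \psi_n)$, and $\|\varphi * \psi_n\|_{L^2} \le \|\varphi\|_{L^2}$ by Young's inequality since $\|\psi_n\|_{L^1} = 1$, while $\varphi * \psi_n$ remains supported in $\Upsilon$ because $\supp \psi_n \subset B(0,1/2n)$ and $\dist(\Upsilon_n, \partial\Upsilon) > 1/n$.

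The more delicate part is handling $\omega_n$, which does not simply pull off the convolution — it multiplies $f*\psi_n$ pointwise rather than convolving. Here I would argue that since $\varphi$ is a fixed test function with compact support, for $n$ large enough $\omega_n$ is so flat over $\supp \varphi + \supp \psi_n$ (a fixed compact set, independent of $n$) that the error incurred in replacing $f_n(\cdot) = \omega_n(\cdot)\rho_n(\cdot)(f*\psi_n)(\cdot)$ by $(f * \psi_n)(\cdot)$ inside the pairing tends to $0$ in the appropriate sense. More carefully: fix $\varphi \neq 0$ and suppose, toward the bound $\|\Psi_{f_n}\| \le \|\Psi_f\|$, that $\|\Psi_f\| = M < \infty$ (the inequality is trivial otherwise). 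One wants $\|\Psi_{f_n}(\varphi)\|_{L^2(\Xi)} \le M \|\varphi\|_{L^2(\Upsilon_n)}$. I would fix $\varphi$, observe that $\Psi_{f_n}(\varphi) = \Psi_f(\varphi * \psi_n) + (\text{correction})$ where the correction involves the derivatives of $\omega_n$ (which carry factors $1/n$) tested against $f$ on a fixed compact set, hence is controlled by a distributional seminorm of $f$ times $1/n$ times seminorms of $\varphi$, and therefore vanishes as $n \to \infty$. But this only gives $\|\Psi_{f_n}(\varphi)\| \le M\|\varphi\| + o(1)$ for each fixed $\varphi$ — not uniformly.

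To get the clean inequality rather than an asymptotic one, I would instead exploit a scaling/dilation symmetry: the key realization is that $\Psi_{f_n, \Upsilon_n, \Xi}$ with the weight $\omega_n$ present is \emph{exactly} intertwined with $\Psi_{g, \Upsilon, \Xi}$ for an appropriate modified symbol $g$ built from $b$-type data, OR — more robustly — that one can absorb $\omega_n$ by testing against $\varphi$ replaced by $\omega_n^{1/2}$-weighted copies in a way that is an isometry in the limit. Actually the cleanest route: write $\omega_n = \eta_n * \tilde\eta_n$ where $\eta_n(x) = n^{-d/2}\eta(x/n)$ has $\|\eta_n\|_{L^2}=1$, so that multiplication by $\omega_n$ in physical space is, on the Fourier side, convolution, and hence $\Psi_{f_n}$ factors through $\Psi_{\rho_n(f*\psi_n)}$ composed with averaging operators of norm $\le 1$ coming from $\eta_n$. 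Combined with the fact that $\rho_n \equiv 1$ on $\Omega_n$ (so $\Psi_{\rho_n(f*\psi_n), \Upsilon_n, \Xi} = \Psi_{f * \psi_n, \Upsilon_n, \Xi}$ exactly, as only values on $\Omega_n$ are seen) and that $\Psi_{f*\psi_n, \Upsilon_n, \Xi}(\varphi) = \Psi_{f,\Upsilon,\Xi}(\varphi * \psi_n)$ with $\|\varphi * \psi_n\|_{L^2} \le \|\varphi\|_{L^2}$, one chains norm-$\le 1$ operators around a single copy of $\Psi_{f,\Upsilon,\Xi}$ and reads off $\|\Psi_{f_n,\Upsilon_n,\Xi}\| \le \|\Psi_{f,\Upsilon,\Xi}\|$. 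The main obstacle, then, is bookkeeping the three reductions (mollification, $\rho_n$-truncation, $\omega_n$-weighting) so that each manifestly contributes an operator of norm at most one and the supports stay inside the correct domains at every stage; the $\omega_n$ step is the one requiring the Fourier-side factorization $\omega_n = \eta_n * \tilde\eta_n$ to see it as a contraction rather than a mere perturbation.
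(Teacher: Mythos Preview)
Your proposal is correct and follows essentially the same three-step reduction as the paper: the mollifier $\psi_n$ is transferred to $\varphi$ via Young's inequality (the paper gets $\widetilde{\psi}_n * \varphi$ rather than $\varphi * \psi_n$, a harmless sign in the convolution), the cutoff $\rho_n$ disappears since it equals $1$ on $\Omega_n$, and the weight $\omega_n$ is handled as a norm-$1$ superposition because $\|\hat\omega\|_{L^1}=1$. The paper makes your last step explicit by writing $\omega_n(x)=\int n^d\hat\omega(n\xi)e^{2\pi i x\cdot\xi}\,d\xi$ and expressing $\Psi_{\omega_n g_n}(\varphi)$ as a Bochner integral of modulated copies $e^{2\pi i\xi\cdot x}\Psi_{g_n}(\varphi_\xi)$, which is precisely the ``averaging over norm-$1$ operators'' you describe.
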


\begin{proof}
First note that
$$\omega_n(x)=\int_{\R^d} n^d \hat{\omega}(n \xi)e^{2\pi i x \cdot \xi} \, d\xi,$$ the integrand on the right having $L^1$-norm equal to $\|\hat\omega\|_{L^1(\R^d)}$. Letting $g_n=\rho_n (f*\psi_n)$, we have for $\varphi\in C^\infty_c(\Upsilon_n)$ and $x \in \Xi$ that
$$\Psi_{f_n}(\varphi)(x)=\int_{\Upsilon_n} \int_{\R^d} n^d\hat{\omega}(n\xi)e^{2\pi i (x+y)\cdot\xi}\,d\xi \, g_n(x+y)\varphi(y) \, dy= \int_{\mathbb{R}^d} n^d \hat{\omega}(n \xi){e^{2\pi i \xi \cdot x}}\Psi_{g_n}(\varphi_\xi)(x) \, d\xi,$$
where $\varphi_\xi(y) = e^{2\pi i y\cdot\xi}\varphi(y)$. Since $\|\varphi_\xi\|_{L^2} = \|\varphi\|_{L^2}$ it follows by the triangle inequality (for $L^2$-valued Bochner integrals) that $$\|\Psi_{f_n, \Upsilon_n, \Xi}\|\leq \|\hat\omega\|_{L^1}\|\Psi_{g_n, \Upsilon_n, \Xi}\| = \|\Psi_{g_n, \Upsilon_n, \Xi}\|.$$

This reduces our task to proving that the operators
$$\Psi_{g_n, \Upsilon_n, \Xi} = \Psi_{\rho_n (f*\psi_n), \Upsilon_n, \Xi} = \Psi_{f*\psi_n, \Upsilon_n, \Xi}$$ are uniformly bounded in $n$.  We have for $\varphi\in C^\infty_c(\Upsilon_n)$ and $x \in \Xi$ that
\begin{align*}
\Psi_{f*\psi_n}(\varphi)(x) &=\int_{\R^d} \int_{\R^d} f((x+y)-z)  \psi_n(z) \, dz~\varphi(y) \, dy \\
&= \int_{\R^d}  f(x+z) \int_{\R^d} \psi_n(y-z)\varphi(y)\, dy \,dz=\Psi_f(\widetilde{\psi}_n * \varphi )(x),\end{align*}
where $\widetilde{\psi}_n(x) = \psi_n(-x)$.
Since
$$\|\widetilde{\psi}_n * \varphi\|_{L^2(\Upsilon)} \leq \|\psi_n\|_{L^1} \|\varphi\|_{L^2(\Upsilon_n)} = \|\psi\|_{L^1} \|\varphi\|_{L^2(\Upsilon_n)} = \|\varphi\|_{L^2(\Upsilon_n)},$$
this completes the proof.
\end{proof}
\begin{proof}[Proof of Theorem~\ref{lacey}]
Suppose that $\Gamma_{f, \R_+^d} = \Psi_{f, \Xi, \Upsilon}$ is bounded, where $\Xi = \Upsilon = \R_+^d$. In this case, we let $\Upsilon_n = (2/n, \infty)^d$. By Proposition~\ref{pextend} we then have that
$$\|\Gamma_{f_n, \Upsilon_n}\|\leq \|\Psi_{f_n, \Upsilon_n,\Xi}\| \leq \| \Gamma_{f, \R_+^d}\|, \quad n \geq 1.$$
Since $\Upsilon_n = z_n + \R_+^d$, $z_n = (2/n, \ldots, 2/n)$, we have that
$$\Gamma_{f_n, \Upsilon_n}(g)(x) = \Gamma_{\tilde{f}_n, \R_+^d} (\tilde{g})(x-z_n),$$
 where $\tilde{f}_n(x) = f_n(x+2z_n)$ and $\tilde{g}_n(x) = g(x+z_n)$.
Since $\tilde{f}_n \in L^2(\R_+^d)$, the computation that lead to \eqref{eq:hankelform} is justified, and we conclude from \cite{ferguson2002characterization, MR2491875} that there is $b_n \in L^\infty(\R^d)$ such that
$$\hat b_n|_{2\Upsilon_n} = f_n|_{2\Upsilon_n}, \quad \|b_n\|_{L^\infty} \leq C \| \Gamma_{f, \R_+^d}\|.$$
By Alaoglu's theorem it follows that there is a weak-star convergent subsequence $(b_{n_k})_{k=1}^\infty$ with limit $b \in L^\infty$ having norm less than $C \| \Gamma_{f, \R_+^d}\|$. It remains to prove that $f=\hat{b}|_{\R_+^d}$, i.e. that $(f,\varphi)=(b,\hat{\varphi})$ holds for all $\varphi\in C^\infty_c(\R_+^d)$. However, this is clear from the construction; since $\hat{\varphi}\in L^1$ we have that
\begin{equation*}
(b,\hat{\varphi})=\lim_{k\rightarrow\infty}(b_{n_k},\hat{\varphi})=\lim_{k\rightarrow\infty}(f_{n_k},{\varphi}) =(f,{\varphi}). \qedhere
\end{equation*}
\end{proof}
In Section~\ref{sec:gdtoep} we will consider Toeplitz operators $\Theta_{f, \Xi}$ for which $\Omega = \Xi - \Xi = \R^d$. In this case $f * \psi_n$ is a smooth function defined in all of $\R^d$, and there is no need to multiply with $\rho_n$ or to introduce the subdomains $\Upsilon_n$. In this case we simply let
$$f_n = \omega_n (f * \psi_n).$$
Clearly, $f_n \to f$ in $\ddd'(\R^d)$ and we have, with the exact same proof as for Proposition~\ref{pextend}, the following approximation result.
\begin{proposition}\label{pextend2}
	Let $\Xi,$ $\Upsilon$ be connected open domains for which $\Omega=\Upsilon+\Xi = \R^d$, and suppose $f\in \ddd'(\R^d)$. For $n \geq 1$, let $f_n$ be constructed as above. Then $$\|\Psi_{f_n,\Upsilon,\Xi}\| \leq \|\Psi_{f,\Upsilon,\Xi}\|.$$
\end{proposition}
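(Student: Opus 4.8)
The plan is to run the proof of Proposition~\ref{pextend} essentially verbatim. The hypothesis $\Omega = \Upsilon + \Xi = \R^d$ only simplifies matters: $f*\psi_n$ is now smooth on all of $\R^d$, so there is no cut-off $\rho_n$ to be peeled off, and since $f$ is a distribution on the whole space one may work directly with $\Upsilon$ instead of the exhausting subdomains $\Upsilon_n$. As before, the argument proceeds in two steps.

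First I would remove the factor $\omega_n$. Writing $\omega_n(x)=\int_{\R^d}n^d\hat\omega(n\xi)e^{2\pi i x\cdot\xi}\,d\xi$, whose integrand has $L^1$-norm $\|\hat\omega\|_{L^1}=1$, and inserting this into the definition of $\Psi_{f_n}(\varphi)(x)$ for $\varphi\in C^\infty_c(\Upsilon)$ and $x\in\Xi$, one obtains
$$\Psi_{f_n}(\varphi)(x)=\int_{\R^d}n^d\hat\omega(n\xi)\,e^{2\pi i\xi\cdot x}\,\Psi_{f*\psi_n}(\varphi_\xi)(x)\,d\xi,\qquad \varphi_\xi(y)=e^{2\pi i y\cdot\xi}\varphi(y).$$
Since $\|\varphi_\xi\|_{L^2}=\|\varphi\|_{L^2}$, the triangle inequality for $L^2(\Xi)$-valued Bochner integrals gives $\|\Psi_{f_n,\Upsilon,\Xi}\|\le\|\hat\omega\|_{L^1}\|\Psi_{f*\psi_n,\Upsilon,\Xi}\|=\|\Psi_{f*\psi_n,\Upsilon,\Xi}\|$. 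Then I would remove the mollifier $\psi_n$ using the identity $\Psi_{f*\psi_n}(\varphi)=\Psi_f(\widetilde\psi_n*\varphi)$, where $\widetilde\psi_n(x)=\psi_n(-x)$; this follows by interchanging the order of integration, which is legitimate because $f*\psi_n$ is globally smooth. Combined with $\|\widetilde\psi_n*\varphi\|_{L^2}\le\|\psi_n\|_{L^1}\|\varphi\|_{L^2}=\|\psi\|_{L^1}\|\varphi\|_{L^2}=\|\varphi\|_{L^2}$, this yields $\|\Psi_{f*\psi_n,\Upsilon,\Xi}\|\le\|\Psi_{f,\Upsilon,\Xi}\|$, and chaining the two estimates proves the proposition.

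The one point requiring attention --- the analogue of where Proposition~\ref{pextend} invoked the buffered subdomains $\Upsilon_n$ with $\dist(\Upsilon_n,\partial\Upsilon)>1/n$ --- is that $\widetilde\psi_n*\varphi$ has to be fed into $\Psi_{f,\Upsilon,\Xi}$ as an admissible argument although its support can protrude a distance at most $1/2n$ beyond $\partial\Upsilon$; here one uses that $f\in\ddd'(\R^d)$, so that $\Psi_f$ is unambiguously defined on all of $C^\infty_c(\R^d)$ and the estimate of the second step still applies. I expect this bookkeeping to be the only real obstacle: the two peeling steps themselves are routine, and are copied word-for-word from the proof of Proposition~\ref{pextend}.
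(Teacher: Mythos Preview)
Your approach is correct and is exactly what the paper does: it states that Proposition~\ref{pextend2} holds ``with the exact same proof as for Proposition~\ref{pextend}'', the simplifications being that $\rho_n$ and the subdomains $\Upsilon_n$ are no longer needed. You have in fact been more careful than the paper in flagging the one delicate spot.

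That said, your resolution of that spot is not quite sufficient. Knowing that $f\in\ddd'(\R^d)$ tells you the identity $\Psi_{f*\psi_n}(\varphi)=\Psi_f(\widetilde\psi_n*\varphi)$ makes sense for any $\varphi\in C_c^\infty(\Upsilon)$, but it does \emph{not} yield the bound $\|\Psi_f(\widetilde\psi_n*\varphi)\|_{L^2(\Xi)}\le\|\Psi_{f,\Upsilon,\Xi}\|\,\|\widetilde\psi_n*\varphi\|_{L^2}$: by definition that inequality is only available when the argument is supported in $\Upsilon$, and protrusion does matter for the norm. The easy fix is a density step: since $f_n\in C_c^\infty(\R^d)$, the operator $\Psi_{f_n,\Upsilon,\Xi}$ is a priori bounded on $L^2(\Upsilon)$ (e.g.\ by Proposition~\ref{ptrivial}), so its norm equals the supremum over the dense subset $\{\varphi\in C_c^\infty(\Upsilon):\dist(\supp\varphi,\partial\Upsilon)>1/(2n)\}$. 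For such $\varphi$ one has $\widetilde\psi_n*\varphi\in C_c^\infty(\Upsilon)$, and then both peeling steps go through verbatim.
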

\section{On convex sets and polytopes} \label{sec:convex}
We recall some basic properties of convex sets. Given an unbounded convex set $\Omega \subset \R^d$ which is either open or closed, its characteristic cone, also known as its recession cone, is the closed set
$$\cc_{\Omega}=\{x\in\R^d \, : \, \Omega+x\R_+\subset\Omega\}.$$
The support function $h_{\Omega}:\R^d\rightarrow (-\infty,\infty]$ is defined by $$h_{\Omega}({\theta})=\sup_{{x}\in \Omega} {x}\cdot{\theta}.$$ We refer to \cite[Sec.~7.4]{hormander} for the basic properties of $h_\Omega$.  The barrier cone of $\Omega$ is the set
\begin{equation}\label{eq22} \bc_{\Omega}=\{\theta \in \R^d \, : \, h_{\Omega}({\theta})<\infty\}.\end{equation}
  The characteristic cone $\cc_{\Omega}$ coincides with the polar cone of the barrier cone $\bc_{\Omega}$, that is,
$$\cc_{\Omega} = \{x \in \R^d \, : \,  x \cdot y \leq 0, \; \: \forall y\in \bc_{\Omega}\}.$$
To give a complete reference for this claim, first note that for closed convex sets $\Omega$, $\cc_{\Omega}$ coincides with the asymptotic cone of $\Omega$, giving \eqref{eq22} by \cite[Theorem~2.2.1]{AusTeb}. When $\Omega$ instead is open and convex we have that $\Omega$ is equal to its \textit{relative interior} $\ri(\Omega)$, and since $\cc_{\ri(\Omega)} = \cc_{\overline{\Omega}}$ \cite[Proposition 1.4.2]{Bert09}, it follows that $\cc_{\Omega} = \cc_{\overline{\Omega}}$ in this case.

We next recall some standard terminology and facts of polytopes, referring to for example \cite[Ch. 7--9]{brondsted2012introduction}. By an open halfspace in $\R^d$ we mean a set
$$H_\nu^r=\{x \in \R^d \, : \, x\cdot \nu>r\},$$
 where $\nu \in \R^d$ is a non-zero vector and $r\in\R$. A closed half-space is the closure of such a set. A finite intersection of half-spaces is called a polyhedral set.

 A convex polytope is a bounded polyhedral set. A closed convex polytope is the convex hull of a finite set of points. The minimal set of such points coincides with the extreme points of the polytope, that is, its vertices.  If the minimal number of defining hyperspaces of a convex polytope is $d+1$ (equivalently, if it has precisely $d+1$ vertices), the polytope is called a simplex. For a non-closed polytope we define its vertices (and its edges and facets) as those of its closure.

The boundary of a polytope set is made up of a finite amount of facets (i.e. $d-1$ dimensional faces), see Corollary 7.4 and Theorem 8.1 of \cite{brondsted2012introduction}. For a polytope $\Pi$ with vertex $x_j$, we denote by $\partial_{\far,x_j}\Pi$ the part of its boundary made up of all facets not containing $x_j$.

A vertex of a polytope will be called simple if it is contained in precisely $d$ of its edges. We say that a polytope is simple if all of its vertices are simple, which coincides with the standard terminology. Equivalently, this means that each vertex is contained in precisely $d$ of its facets (cf. \cite[Theorem~12.11]{brondsted2012introduction}).

By an affine linear transformation we mean a map of the form $A(x)=x_0+L(x)$ where $L$ is a linear map, and we call $x_0$ the origin of such a map. The following simple lemma gives a third characterization of simple vertices.

\begin{lemma}\label{l3}
Let $\{x_j\}_{j=1}^J$ be the vertices of a closed polytope $\Pi$. Then the vertex $x_j$ is simple if and only if it is the origin of an invertible affine transformation $A_j$ such that $\Pi$ locally coincides with $A_j(\overline{\R_+^d})$ around $x_j$, i.e. for any neighborhood $U$ of $x_j$ such that $\overline{U} \cap \partial_{\far, x_j}\Pi = \emptyset$ we have that
$$A_j^{-1}(\Pi \cap U) = \overline{\R_+^d} \cap A_j^{-1}(U).$$
\end{lemma}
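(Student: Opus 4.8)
The plan is to reduce everything to a minimal (irredundant) half-space representation $\Pi=\bigcap_{i=1}^{N}\overline{H_{\nu_i}^{r_i}}$, for which the facets of $\Pi$ are exactly the sets $F_i=\Pi\cap\{x:x\cdot\nu_i=r_i\}$. Fix the vertex $x_j$ and let $S=\{i:x_j\in F_i\}$ be the set of indices of facets containing $x_j$, so that $x_j\cdot\nu_i>r_i$ strictly for $i\notin S$ and, by the facet characterization of simplicity, $x_j$ is simple precisely when $|S|=d$. The object producing the affine map is the vertex (tangent) cone $C=\bigcap_{i\in S}\{z:z\cdot\nu_i\geq 0\}$: one always has $\Pi-x_j\subset C$ (if $x\in\Pi$ then $(x-x_j)\cdot\nu_i\geq r_i-r_i=0$ for $i\in S$), while conversely $(x_j+C)\cap U\subset\Pi$ as soon as the inactive constraints $x\cdot\nu_i>r_i$, $i\notin S$, hold throughout $U$, which is the case for any sufficiently small neighborhood of $x_j$ (and, after this reduction, for the neighborhoods $U$ considered in the statement, whose closures avoid $\partial_{\far,x_j}\Pi=\bigcup_{i\notin S}F_i$). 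Thus $\Pi$ coincides with $x_j+C$ near $x_j$, and the content of the lemma becomes: $C$ is the image of $\overline{\R_+^d}$ under an invertible linear map if and only if $|S|=d$.

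The key point is that \emph{the active normals at a vertex span $\R^d$}. I would deduce this from the standard fact that every face of a polytope equals the intersection of the facets containing it, so that $\{x_j\}=\bigcap_{i\in S}F_i=\Pi\cap A$, where $A=\bigcap_{i\in S}\{x:x\cdot\nu_i=r_i\}$ is an affine flat through $x_j$ with direction space $(\mathrm{span}\{\nu_i:i\in S\})^{\perp}$. If this span were a proper subspace, $A$ would contain a line through $x_j$; since the inactive constraints are strict at $x_j$, a short subsegment of that line would still lie in $\Pi$, contradicting $\Pi\cap A=\{x_j\}$. Hence $\mathrm{span}\{\nu_i:i\in S\}=\R^d$, so $|S|\geq d$, and therefore $x_j$ is simple if and only if $\{\nu_i:i\in S\}$ is a \emph{basis} of $\R^d$.

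The forward direction is then explicit. Enumerate $S=\{i_1,\dots,i_d\}$; since $\{\nu_{i_k}\}_{k=1}^{d}$ is a basis there is a unique invertible linear map $L_j$ with $L_j^{\mathsf{T}}\nu_{i_k}=e_k$ for all $k$, and for $z=L_jy$ one computes $z\cdot\nu_{i_k}=y\cdot(L_j^{\mathsf{T}}\nu_{i_k})=y_k$, whence $L_j(\overline{\R_+^d})=\{z:z\cdot\nu_i\geq 0,\ i\in S\}=C$. Setting $A_j(y)=x_j+L_jy$ gives an invertible affine transformation with origin $x_j$ and $A_j(\overline{\R_+^d})=x_j+C$, which by the first paragraph agrees with $\Pi$ on the relevant neighborhoods; applying $A_j^{-1}$ this is exactly the asserted identity $A_j^{-1}(\Pi\cap U)=\overline{\R_+^d}\cap A_j^{-1}(U)$. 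For the converse, if such an $A_j$ exists then near $x_j$ the polytope coincides with the affine cone $A_j(\overline{\R_+^d})$, whose $(d-1)$-dimensional faces through its apex $x_j$ are precisely the $d$ images of the coordinate facets $\{y_k=0\}\cap\overline{\R_+^d}$; since two polyhedral sets that agree on a neighborhood of a common point have the same facets through that point, $\Pi$ has exactly $d$ facets through $x_j$, i.e.\ $x_j$ is simple.

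I do not expect a genuine obstacle. The only two places requiring care are (i) the verification that $\Pi$ really coincides with $x_j+C$ on the neighborhoods allowed in the statement, handled by first reducing to a small neighborhood on which the inactive half-space constraints stay strict; and (ii) the bookkeeping identifying the facets of $\Pi$ through $x_j$ with the facets of the vertex cone through its apex, which is exactly where irredundancy of the half-space representation is used. Everything else is a direct translation between the ``$d$ facets meet at the vertex'' description of a simple vertex and the ``the vertex cone is a linear image of $\overline{\R_+^d}$'' description.
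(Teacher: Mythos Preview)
The paper states this lemma without proof, calling it a ``simple lemma,'' so there is no argument in the paper to compare your attempt against. Your approach via an irredundant half-space description and the vertex (tangent) cone $C=\bigcap_{i\in S}\{z:z\cdot\nu_i\ge 0\}$ is the standard route, and the core of your sketch---that the active normals at a vertex span $\R^d$, so that $|S|=d$ is equivalent to $\{\nu_i:i\in S\}$ being a basis, and that in that case $L_j^{\mathsf T}\nu_{i_k}=e_k$ produces an invertible $L_j$ with $L_j(\overline{\R_+^d})=C$---is correct.

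There is one genuine point your plan does not settle. The lemma claims the identity for \emph{every} neighborhood $U$ of $x_j$ with $\overline{U}\cap\partial_{\far,x_j}\Pi=\emptyset$, not only for small ones, and your proposed handling of (i), ``first reducing to a small neighborhood on which the inactive half-space constraints stay strict,'' does not recover the general case. The hypothesis only says $\overline{U}$ misses the \emph{facets} $F_i=\Pi\cap\{x\cdot\nu_i=r_i\}$ for $i\notin S$, not the full supporting hyperplanes, so on a large or disconnected $U$ the inactive constraints may fail: for $\Pi=[0,1]^2$, $x_j=0$, the set $U=B(0,\tfrac{1}{10})\cup B((2,2),\tfrac{1}{10})$ satisfies $\overline{U}\cap\partial_{\far,0}\Pi=\emptyset$, yet $(2,2)\in\overline{\R_+^2}\cap U$ while $(2,2)\notin\Pi$. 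Thus, as literally stated, the displayed identity can fail for such $U$, and no ``reduction to small $U$'' will repair this. In the paper's only application the neighborhood in question is a small neighborhood of a compact subset of $\overline{\Pi}$, where your argument works without modification; you should either restrict to convex (or star-shaped about $x_j$, or sufficiently small) neighborhoods and note that this suffices for the intended use, or flag that the quantifier in the lemma is slightly too generous.
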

By compactness it is easy to construct a partition of unity adapted to the vertices of $\Pi$.
\begin{lemma}\label{l9}
Given a polytope $\Pi$ with vertices $\{x_j\}_{j=1}^J$ there exist functions $\{\mu_j\}_{j=1}^J$ such that $\mu_j \in C_c^\infty(\R^d)$, $\sum_{j=1}^J \mu_j(x) = 1$ for $x \in \overline{\Pi}$, and $\supp\mu_j\cap\partial_{\far, x_j}\Pi=\emptyset$.
\end{lemma}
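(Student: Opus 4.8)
The plan is to reduce the statement to the standard construction of a smooth partition of unity. Concretely, I would first produce, for each vertex $x_j$, an open set $V_j\subset\R^d$ such that $\overline\Pi\subset\bigcup_{j=1}^J V_j$ and $\overline{V_j}\cap\partial_{\far,x_j}\Pi=\emptyset$. Granting this, one takes $\{\mu_j\}_{j=1}^J$ to be any smooth partition of unity subordinate to the finite open cover $\{V_j\}_{j=1}^J$ of the compact set $\overline\Pi$: then $\mu_j\in C_c^\infty(\R^d)$, $\supp\mu_j\subset V_j$, and $\sum_{j=1}^J\mu_j\equiv1$ on a neighborhood of $\overline\Pi$. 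Since $\supp\mu_j$ is a closed subset of $V_j$, it is contained in $\overline{V_j}$ and hence disjoint from $\partial_{\far,x_j}\Pi$, which is precisely the asserted property; and $\sum_j\mu_j=1$ holds in particular on $\overline\Pi$.

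The one point requiring an argument is the covering. Set $W_j:=\overline\Pi\setminus\partial_{\far,x_j}\Pi$, which is relatively open in $\overline\Pi$ because $\partial_{\far,x_j}\Pi$ is a finite union of facets and therefore closed; I claim $\overline\Pi=\bigcup_{j=1}^J W_j$. To see this, fix $x\in\overline\Pi$. Since $\overline\Pi$ is the convex hull of its vertices, write $x=\sum_{k=1}^J t_k x_k$ with $t_k\geq0$ and $\sum_k t_k=1$, and choose any index $j$ with $t_j>0$. If $F$ is a facet containing $x$, write $F=\overline\Pi\cap\{y:y\cdot\nu=r\}$ for a supporting hyperplane of $\overline\Pi$, so that $x_k\cdot\nu\leq r$ for every $k$ while $\sum_k t_k(x_k\cdot\nu)=x\cdot\nu=r$; as $\sum_k t_k=1$ and $t_j>0$, this forces $x_j\cdot\nu=r$, i.e. $x_j\in F$. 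Thus every facet containing $x$ contains $x_j$, so $x$ lies on no facet not containing $x_j$, i.e. $x\in W_j$.

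Finally, passing from the relatively open cover $\{W_j\}$ of $\overline\Pi$ to the open sets $V_j\subset\R^d$ is a routine compactness step: for each $x\in\overline\Pi$ pick $j(x)$ with $x\in W_{j(x)}$ and, using that $\partial_{\far,x_{j(x)}}\Pi$ is closed, a radius $r(x)>0$ with $\overline{B(x,r(x))}\cap\partial_{\far,x_{j(x)}}\Pi=\emptyset$; extract a finite subcover $B(x^{(1)},r_1),\dots,B(x^{(m)},r_m)$ of $\overline\Pi$ and put $V_j:=\bigcup\{B(x^{(i)},r_i):j(x^{(i)})=j\}$ (an empty union being $\emptyset$, in which case $\mu_j\equiv0$). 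Being a finite union, $\overline{V_j}=\bigcup\{\overline{B(x^{(i)},r_i)}:j(x^{(i)})=j\}$ is disjoint from $\partial_{\far,x_j}\Pi$, and $\bigcup_j V_j\supset\overline\Pi$, as needed. The only non-bookkeeping ingredient is the covering claim, and there the sole facts used are that a polytope is the convex hull of its vertices and that a facet is the intersection of the polytope with a supporting hyperplane (see \cite{brondsted2012introduction}); I do not expect any genuine obstacle beyond stating this claim precisely.
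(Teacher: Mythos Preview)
Your proposal is correct and follows exactly the approach the paper indicates: the paper's proof is the single sentence ``By compactness it is easy to construct a partition of unity adapted to the vertices of $\Pi$,'' and you have supplied the details it omits. In particular, your covering argument---that for $x=\sum_k t_k x_k$ with $t_j>0$, every facet through $x$ must contain $x_j$---is the natural way to verify that the sets $W_j=\overline\Pi\setminus\partial_{\far,x_j}\Pi$ cover $\overline\Pi$, after which everything is standard.
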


\section{General domain Hankel operators} \label{sec:gdhankel}

We now consider general domain Hankel operators $\Gamma_{f,\Xi}$ for convex domains $\Xi$. Observe that in this case $\Omega=\Xi+\Xi = 2\Xi$. We begin with a proposition that links the bounded Hankel operators with weak factorization.

\begin{proposition} \label{lem:banach}
	Let $\Xi$ be an open convex domain. Then
	$$X = \left\{\Gamma_{f, \Xi} \, : \, \| \Gamma_{f, \Xi} \| < \infty \right\}$$
	is a closed subspace of the space of bounded linear operators on $L^2(\Xi)$. As a Banach space, it is isometrically isomorphic to the dual space $(\PW_\Xi \odot \PW_\Xi)^*$. More precisely, bounded functionals $\mu$ on the projective tensor product correspond to distributions $f$ on $\Omega = 2\Xi$,
$$( f, g ) = \mu(\mathcal{F}^{-1}g), \quad g \in C_c^\infty(\Omega),$$
for which $\|\Gamma_{f, \Xi}\| = \|\mu\|.$
\end{proposition}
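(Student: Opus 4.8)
The plan is to identify the three objects in play -- bounded Hankel operators $\Gamma_{f,\Xi}$, distributions $f$ on $\Omega = 2\Xi$ that induce a bounded bilinear form on $\PW_\Xi \times \PW_\Xi$, and bounded linear functionals on $\PW_\Xi \odot \PW_\Xi$ -- and to show that the natural maps between them are isometric isomorphisms. First I would record the analogue of \eqref{eq:hankelform} in the present setting: for $\varphi, \varphi' \in C_c^\infty(\Xi)$, a direct computation with the definition $\Psi_f(\varphi)(x) = (f, T_x\varphi)$ gives
$$\langle \Gamma_{f,\Xi}\varphi, \overline{\varphi'} \rangle_{L^2(\Xi)} = (f, \varphi * \varphi') = (f, \mathcal{F}^{-1}(\hat\varphi \hat{\varphi'})) = \mu(\mathcal{F}^{-1}(\hat\varphi\,\hat{\varphi'})),$$
where the convolution $\varphi * \varphi'$ is supported in $\Omega$ so the pairing makes sense, and $\mathcal{F}^{-1}(\hat\varphi\,\hat{\varphi'})$ runs over a dense subset of $\PW_\Xi \odot \PW_\Xi$ as $\varphi, \varphi'$ range over $C_c^\infty(\Xi)$ (since $\mathcal{F}^{-1}C_c^\infty(\Xi)$ is dense in $\PW_\Xi$ and products of pairs span a dense subset of the projective tensor product). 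This identity shows simultaneously that $\|\Gamma_{f,\Xi}\|$ equals the norm of the bilinear form $(G,H) \mapsto \mu(GH)$ on $\PW_\Xi \times \PW_\Xi$, and hence equals $\|\mu\|_{(\PW_\Xi \odot \PW_\Xi)^*}$ by the universal property of the projective tensor product (a bounded bilinear form on $X \times X$ extends uniquely to a bounded linear functional on $X \odot X$ of the same norm).

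Next I would run the correspondence in both directions. Given $\mu \in (\PW_\Xi \odot \PW_\Xi)^*$, the formula $(f,g) = \mu(\mathcal{F}^{-1}g)$ for $g \in C_c^\infty(\Omega)$ defines a distribution on $\Omega$: indeed $\mathcal{F}^{-1} \colon C_c^\infty(\Omega) \to \PW_\Xi \odot \PW_\Xi$ is continuous for the relevant topologies because $\Omega = \Xi + \Xi$, so that any $g \in C_c^\infty(\Omega)$ can be written (e.g.\ by a Fourier-analytic decomposition, or by the open mapping/factorization already implicit in the weak factorization machinery) as a suitably convergent sum $\sum_j \varphi_j * \varphi_j'$ with control on $\sum_j \|\mathcal{F}^{-1}\varphi_j\|_{\PW_\Xi}\|\mathcal{F}^{-1}\varphi_j'\|_{\PW_\Xi}$; continuity in the $\ddd'(\Omega)$ sense then follows because convergence in $C_c^\infty(\Omega)$ is stronger. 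Conversely, a bounded $\Gamma_{f,\Xi}$ yields via the displayed identity a bounded bilinear form on the dense set $\mathcal{F}^{-1}C_c^\infty(\Xi) \times \mathcal{F}^{-1}C_c^\infty(\Xi)$, which extends to $\PW_\Xi \times \PW_\Xi$ and hence to a functional $\mu$ on $\PW_\Xi \odot \PW_\Xi$; the two assignments are mutually inverse because they agree on the dense subspace spanned by elementary tensors. That $X$ is a linear subspace is immediate from linearity of $f \mapsto \Gamma_{f,\Xi}$, and closedness then follows for free, since $X$ has just been identified isometrically with the dual Banach space $(\PW_\Xi \odot \PW_\Xi)^*$, which is complete.

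The main obstacle is the technical point that the map $\mathcal{F}^{-1} \colon C_c^\infty(\Omega) \to \PW_\Xi \odot \PW_\Xi$ is well-defined and continuous, i.e.\ that every $g \in C_c^\infty(\Omega)$ actually lies in the projective tensor product with quantitative control -- this is exactly a (soft, qualitative) weak-factorization statement for test functions, needed only to see that $(f,g) = \mu(\mathcal{F}^{-1}g)$ makes sense as a distribution, not the hard quantitative factorization of Theorem~\ref{t1alt}. I expect this to be handled by writing $g$ with $\supp g \subset K \subset \Omega$ compact, covering $K$ by finitely many translates $x_i + \Xi$, using a partition of unity to split $g = \sum_i g_i$ with $\supp g_i \subset x_i + \Xi$, and then for each piece factoring $\hat g_i$ through a fixed bump in $\mathcal{F}\PW_\Xi$ after a translation -- a routine but slightly fiddly argument. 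Everything else is the universal property of $\odot$ together with density of $\mathcal{F}^{-1}C_c^\infty(\Xi)$ in $\PW_\Xi$.
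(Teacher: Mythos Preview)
Your overall architecture matches the paper's: both routes reduce everything to showing that $\mathcal{F}^{-1}(C_c^\infty(\Omega)) \subset \PW_\Xi \odot \PW_\Xi$ continuously, and then invoke the universal property of the projective tensor product together with the identity \eqref{eq:muf}. You correctly flag this inclusion as the main obstacle. (Minor slip: in your displayed identity, $\hat\varphi\hat\varphi'$ should be $\check\varphi\check\varphi'$, so that the argument of $\mu$ actually lies in $\PW_\Xi\odot\PW_\Xi$; and the parenthetical appeal to ``the weak factorization machinery'' would be circular, since Corollary~\ref{cor:fact} relies on the present proposition.)

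The genuine gap is in your proposed resolution of that obstacle. Covering $\supp g$ by translates $x_i+\Xi$ is the wrong reduction: having $\supp g_i\subset x_i+\Xi$ does nothing toward writing $g_i$ as a sum of convolutions of $L^2(\Xi)$ functions, because $\Xi$ does not add with a point to give something of the form $A+A$ with $A\subset\Xi$. And ``factoring $\hat g_i$ through a fixed bump'' is not a method: a single convolution factorization $g_i=\psi*h$ with both factors compactly supported in prescribed small sets is in general impossible. The paper's argument is different and not routine. It covers $\supp g$ by small cubes $Q\subset\Omega$, using convexity via $Q/2\subset\Xi$ and $Q=(Q/2)+(Q/2)$, and then on the model cube $(0,1)^d$ introduces the tent function $\Lambda=\chi_{(0,1/2)^d}*\chi_{(0,1/2)^d}$, applies Wiener's lemma to get $g/\Lambda\in A([0,1]^d)$, and observes that each Fourier mode $e^{2\pi i k\cdot x}\Lambda(x)$ is itself a self-convolution of modulated indicators in $L^2((0,1/2)^d)$. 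This produces an explicit absolutely convergent decomposition $g=\sum_k g_k*h_k$ with the required continuity in $g$. Your sketch does not contain this idea, and without it the step does not go through.
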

\begin{proof}
The main fact to be proved is that
$$\mathcal{F}^{-1}(C_c^\infty(\Omega)) \subset  \PW_{\Xi}\odot \PW_{\Xi}.$$
Since $C_c^\infty(\Xi)$ is dense in $L^2(\Xi)$, it then follows that $\mathcal{F}^{-1}(C_c^\infty(\Omega))$ is dense in the product $\PW_{\Xi}\odot \PW_{\Xi}$.

We will actually show a little more than the claim. Namely, every $g \in C_c^\infty(\Omega)$ can be written
$$g = \sum_{k} g_k * h_k, \quad g_k, h_k \in L^2(\Xi),$$
in such a way that the corresponding map $g \mapsto \sum_k \|g_k\|_{L^2(\Xi)} \|h_k\|_{L^2(\Xi)}$ is continuous from $C_c^\infty(\Omega)$, equipped with the usual test function topology, to $\mathbb{R}$. By employing a partition of unity in which each member is compactly supported in a cube, it is sufficient to prove the claim when $\Xi = (0, 1/2)^d$. For this we employ Fourier series. Let $\lambda(t) = 1/2 - |t-1/2|$, $t \in [0,1]$, and let
$$\Lambda (x) = \prod_{i=1}^d \lambda(x_i), \quad x \in (0,1)^d.$$
Note that $\lambda$ is in the Wiener algebra $A([0,1])$, the space of functions on $[0,1]$ with absolutely convergent Fourier series, equipped with pointwise multiplication. Therefore $\Lambda$ is in the Wiener algebra $A([0,1]^d)$, since $\Lambda$ is a tensor power of $\lambda$. Since $g \in C_c^\infty((0,1)^d)$ and $\Lambda$ is non-zero on compact subsets of $(0,1)^d$ it follows by Wiener's lemma \cite[Ch. 5]{Groch10} that $g/\Lambda \in A([0,1]^d)$ (to apply Wiener's lemma, first modify $\Lambda$ to be nonzero outside the support of $g$). Expanding $g/\Lambda$ in a Fourier series,
$$(g/\Lambda)(x) = \sum_{k \in \Z^d} a_k e^{i2\pi k \cdot x}, \quad \sum_{k \in \Z^d} |a_k| < \infty, \quad x \in [0,1]^d,$$
let $h_k(x) = e^{i2\pi k \cdot x} \chi_{(0,1/2)^d}(x)$, $g_k = a_k h_k$. Then a computation shows that
$$(g_k * h_k)(x) = a_k e^{i2\pi k \cdot x} \Lambda(x), \quad x \in (0,1)^d,$$
so that
$$g = \sum_{k \in \Z^d} g_k * h_k, \quad \sum_{k \in \Z^d} \|g_k\|_{L^2((0,1/2)^d)} \|h_k\|_{L^2((0,1/2)^d)} < \infty.$$
An inspection of the argument shows that the $g \mapsto g/\Lambda$ is continuous from $C_c^\infty((0,1)^d)$ to $A([0,1]^d)$, and therefore $g \mapsto \sum_k \|g_k\|_{L^2((0,1/2)^d)} \|h_k\|_{L^2((0,1/2)^d)}$ is continuous on $C_c^\infty((0,1)^d)$ as promised.

Suppose now that $\mu \in (\PW_{\Xi}\odot \PW_{\Xi})^*$. We have just demonstrated that $( f, g ) =  \mu(\mathcal{F}^{-1}g)$, $g \in C_c^\infty(\Omega)$, defines a distribution on $\Omega$. Hence we may consider the Hankel operator $\Gamma_{f, \Xi}$. For $g, h \in C_c^\infty(\Xi)$ we have that
\begin{equation} \label{eq:muf}
\langle \Gamma_{f}g, h \rangle_{L^2(\Xi)} = ( f, g * \bar{h} ) = \mu(\mathcal{F}^{-1}g \cdot \mathcal{F}^{-1}\bar{h}).
\end{equation}
Since $\mu$ is a bounded functional on $\PW_{\Xi}\odot \PW_{\Xi}$ we conclude that
$$|\langle \Gamma_{f}g, h \rangle_{L^2(\Xi)}| \leq \|\mu\| \|\mathcal{F}^{-1} g\|_{\PW_\Xi} \|\mathcal{F}^{-1}\bar{h}\|_{\PW_\Xi} = \|\mu\| \|g\|_{L^2(\Xi)} \|h\|_{L^2(\Xi)},$$
that is, $\Gamma_{f, \Xi}$ is bounded, and in fact $\|\Gamma_f\| = \|\mu\|$. Conversely, if $f$ is a distribution on $\Omega$ such that $\Gamma_{f, \Xi}$ is bounded, it is clear that $f$ induces a bounded functional $\mu$ on $\PW_{\Xi}\odot \PW_{\Xi}$ by \eqref{eq:muf}. This proves that $X$ is isometrically isomorphic to the Banach space $(\PW_{\Xi}\odot \PW_{\Xi})^*$, which also entails that $X$ is closed, completing the proof.
\end{proof}

In the remainder of this section we assume that $\Xi$ is a convex polytope. Next we prove Theorem \ref{t1alt} under the additional assumption that $f$ is supported around one simple vertex of $\Omega$.

\begin{proposition}\label{p1}
	Let $\Xi \subset \R^d$ be an open convex polytope, $x$ a simple vertex of $\Omega=2\Xi$, and let $f\in \ddd'(\Omega)$ be such that $\supp f \cap \partial_{\far,x} \Omega = \emptyset$. If $\Gamma_f$ is bounded as an operator on $L^2(\Xi)$, then there exists a $b\in L^\infty(\R^d)$ such that $\hat b |_{\Omega}=f.$
\end{proposition}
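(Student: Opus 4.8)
The plan is to reduce Proposition~\ref{p1} to the already-established Theorem~\ref{lacey} by straightening out the simple vertex $x$ of $\Omega=2\Xi$ using Lemma~\ref{l3}. Since $x$ is a simple vertex, there is an invertible affine transformation $A$ with origin $x$ such that $\Omega$ coincides with $A(\overline{\R_+^d})$ in a neighborhood $U$ of $x$ with $\overline U \cap \partial_{\far,x}\Omega=\emptyset$; since $\supp f$ is disjoint from $\partial_{\far,x}\Omega$, after possibly shrinking we may assume $\supp f \subset U$, so $f$ may be regarded as a distribution on the cone $A(\overline{\R_+^d})$. The affine change of variables transports $\Gamma_{f,\Xi}$ on $L^2(\Xi)$ to a general domain Hankel operator on $L^2(\Xi')$, where $\Xi'$ is the (convex) preimage of $\Xi$ under $A$, with $2\Xi' = A^{-1}(\Omega)$ agreeing with $\overline{\R_+^d}$ near the origin. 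The point is that boundedness is preserved (up to the Jacobian constant of $A$ and the linear distortion of the symbol's Fourier transform), and the conclusion -- existence of $b\in L^\infty(\R^d)$ with $\hat b|_\Omega = f$ -- is also invariant under affine changes of variables, since an affine substitution in $x$ corresponds to a linear substitution plus modulation on the Fourier side, both of which preserve $L^\infty(\R^d)$.

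Once in these coordinates the symbol is supported near the origin inside $\overline{\R_+^d}$, but $\Xi'$ is only a convex polytope, not the octant $\R_+^d$; what saves us is that only the local geometry of $\Xi'$ near $0$ matters, because $\supp f$ is concentrated there. Concretely, I would compare $\Gamma_{f,\Xi'}$ with $\Gamma_{f,\R_+^d} = \Psi_{f,\R_+^d,\R_+^d}$. Writing $\Psi_{f,\R_+^d,\R_+^d}$ in block form with respect to the orthogonal decomposition $L^2(\R_+^d) = L^2(V)\oplus L^2(\R_+^d\setminus V)$ for a small cube $V = (0,\varepsilon)^d \subset \Xi'$, the support condition on $f$ forces $\Psi_{f,\R_+^d,\R_+^d}$ to vanish on all blocks except the corner block $L^2(V)\to L^2(V)$ -- indeed $f(x+y)=0$ unless both $x$ and $y$ lie in a fixed small neighborhood of $0$. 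The same reasoning shows $\Gamma_{f,\Xi'}$, restricted appropriately, agrees with that same corner block (since near $0$, $\Xi'$ and $\R_+^d$ coincide). Hence $\|\Gamma_{f,\R_+^d}\| = \|\Gamma_{f,\Xi'}\|$ (or at least they are comparable), so boundedness of $\Gamma_{f,\Xi}$ implies boundedness of $\Gamma_{f,\R_+^d}$. Theorem~\ref{lacey} then yields $b\in L^\infty(\R^d)$ with $\hat b|_{\R_+^d} = f$; in particular $\hat b|_\Omega = f$ since $\supp f$ lies in the relevant neighborhood, and pulling back through $A$ gives the desired $b$ for the original $\Omega$.

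The main obstacle I anticipate is bookkeeping around the affine straightening together with the fact that $A(\overline{\R_+^d})$ is a shifted, linearly-skewed octant rather than $\overline{\R_+^d}$ itself: one must verify carefully that the Fourier-side transformation of $f$ under $A$ stays within the scope of Theorem~\ref{lacey}, i.e. that the conclusion ``$\hat b|_{A^{-1}(\Omega)} = $ (transformed $f$)'' can be pushed back to ``$\hat b'|_\Omega = f$'' for a genuine $b'\in L^\infty(\R^d)$. This is where the modulation/linear-substitution invariance must be spelled out, and where one should be slightly careful that $A^{-1}(\Xi)$ is still an open convex set whose characteristic cone near the origin is exactly $\R_+^d$. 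A secondary technical point is justifying the ``corner block'' localization rigorously at the level of distributions rather than $L^2$ symbols -- but here one can invoke Propositions~\ref{pextend} and \ref{pextend2} to reduce to smooth compactly supported $f$, for which the support-localization argument is transparent, and then pass to the limit using Proposition~\ref{lem:banach} (the space of bounded Hankel symbols is closed, and weak-$*$ limits of the $b_n$ exist by Alaoglu, exactly as in the proof of Theorem~\ref{lacey}).
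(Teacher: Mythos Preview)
Your affine-straightening reduction via Lemma~\ref{l3} is correct and matches the paper's first step. The genuine gap is in your ``corner block'' localization. The hypothesis $\supp f \cap \partial_{\far,x}\Omega = \emptyset$ does \emph{not} say that $\supp f$ is concentrated in a small neighborhood of the vertex; it only says that $\supp f$ stays away from the far facets. After straightening, $\supp f$ can occupy a large portion of $\overline{\Omega'}\subset\overline{\R_+^d}$. Concretely, take $\Xi'=(0,1)^2$, $\Omega'=(0,2)^2$, and $\supp f=[0,3/2]^2$ (which is disjoint from $\partial_{\far,0}\Omega'$). For $x,y\in\R_+^2$ with $x+y\in\supp f$ you only get $x,y\in[0,3/2]^2$, which is \emph{not} contained in $\Xi'$. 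So with $V=(0,\varepsilon)^2$ small enough to sit inside $\Xi'$, the blocks $P_{\R_+^2\setminus V}\,\Gamma_{f,\R_+^2}\,P_V$ and $P_{\R_+^2\setminus V}\,\Gamma_{f,\R_+^2}\,P_{\R_+^2\setminus V}$ are genuinely nonzero and cannot be read off from $\Gamma_{f,\Xi'}$ by restriction alone. The assertion ``$f(x+y)=0$ unless both $x$ and $y$ lie in a fixed small neighborhood of $0$'' is therefore false, and the comparison $\|\Gamma_{f,\R_+^d}\|=\|\Gamma_{f,\Xi'}\|$ does not follow.

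What is missing is exactly the idea the paper supplies: a fine cube decomposition $\R_+^d=\bigcup_n rC_n$ with $r$ so small that whenever $(rC_m+rC_n)\cap\supp f\neq\emptyset$ one has $rC_m+rC_n\subset\Omega'$ (this uses the positive distance from $\supp f$ to $\partial_{\far,0}\Omega'$), followed by the translation identity
\[
\|\Psi_{f,\,rC_m,\,rC_n}\|=\|\Psi_{f,\,rC_{(m+n)/2},\,rC_{(m+n)/2}}\|.
\]
Since $2\,rC_{(m+n)/2}=rC_m+rC_n\subset\Omega'=2\Xi'$, convexity gives $rC_{(m+n)/2}\subset\Xi'$, so each of the finitely many nonzero blocks is dominated by $\|\Gamma_{f,\Xi'}\|$. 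This translation-and-convexity step is the heart of the argument; once you insert it, the rest of your plan (apply Theorem~\ref{lacey}, pull back through $A$) goes through.
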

\begin{proof}
	As in Lemma~\ref{l3}, let $A$ be an affine transformation with origin $x$ such that $A(\R_+^d)$ coincides with $\Xi$ in a neighborhood of $x$. It is straightforward to verify that it suffices to prove the proposition for $\Gamma_{f\circ A, A^{-1}(\Xi)}$. Since $A^{-1}(\Xi)$ is also a convex polytope, we may hence assume that $x=0$ and that $\Omega$ coincides with $\R_+^d$ in a neighborhood $U$ of $\supp f$, $\overline{U} \cap \partial_{far,0} \Omega = \emptyset$.  In particular, since $\Omega$ is a convex polytope, we have that $\Omega \subset \mathbb{R}_+^d$. Since $\supp f \subset U \cap \overline{\Omega}$ and $\overline{U} \cap \partial_{far,0} \Omega = \emptyset$, we can extend $f$ to a distribution on all of $\R_+^d$ by letting it be zero outside $\Omega$. Our strategy is to show that the operator $\Gamma_{f,\R_+^d}$ is bounded and to then apply Theorem~\ref{lacey}.
	
	For $n\in\N^d$ let $C_n$ denote the cube $(n_1,n_1+1)\times \ldots\times (n_d,n_d+1)$. For a set $X \subset \R_+^d$, let $P_X \colon L^2(\R_+^d) \to L^2(\R_+^d)$ denote the orthogonal projection of $L^2(\R_+^d)$ onto $L^2(X)$, and let $r>0$ be such that $$2\sqrt{d}r<\dist(U \cap \overline{\Omega},\partial_{far,0}\Omega).$$
	By considering test functions $g \in C_c^\infty(\R_+^d)$ such that $\supp g \cap \overline{rC_m} \subset rC_m$ for every $m$, we give meaning to the equality
	$$\Gamma_{f,\R_+^d}=\left(\sum_{n\in\N^d}P_{rC_n}\right)\Gamma_{f,\R_+^d} \left(\sum_{m\in\N^d}P_{rC_m}\right)=\sum_{m,n\in\N^d}P_{rC_n}\Gamma_{f,\R_+^d} P_{rC_m},$$
	a term $P_{rC_n}\Gamma_{f,\R_+^d} P_{rC_m}$ being non-zero only if
	\begin{equation}\label{empty}(rC_m+rC_n)\cap \supp f \neq \emptyset.\end{equation}
	Hence there are only finitely many non-zero terms in the decomposition. Since
	$$\|P_{rC_n}\Gamma_{f,\R_+^d} P_{rC_m}\|=\|\Psi_{f,rC_m,rC_n}\|,$$
	recalling the definition of $\Psi_f$ from Section~\ref{sec:background},
	it therefore suffices to prove that $\|\Psi_{f,rC_m,rC_n}\|$ is bounded whenever \eqref{empty} holds. If $rC_m,rC_n\subset \Xi$ there is nothing to prove since $\Gamma_{f,\Xi}$ is bounded by hypothesis. For the other terms, note that \eqref{empty}, $\supp f \subset U \cap \overline{\Omega}$, and the choice of $r$ implies that
	\begin{equation}\label{inc}rC_m+rC_n\subset \Omega,\end{equation}
	since $2\sqrt{d} r$ is the diameter of $rC_m+rC_n$. For any $z\in\R^d$, $x \in rC_n$, and $g \in C_c^\infty(rC_m)$ we have that $$\Psi_{f,rC_m,rC_n}(g)(x)=\int_{rC_m} f(x+y)g(y) \, dy=\int_{rC_m+z} f(x+(y-z))g(y-z)dy,$$
	and hence
	$$\|\Psi_{f,rC_m,rC_n}\|=\|\Psi_{f,rC_m+z,rC_n-z}\|.$$
	In particular, for $z=r(n-m)/2$ we obtain that $$\|\Psi_{f,rC_m,rC_n}\|=\|\Psi_{f,rC_{\frac{m+n}{2}},rC_{\frac{m+n}{2}}}\|.$$ However, $2rC_{\frac{m+n}{2}}=rC_m+rC_n$ so by \eqref{inc} we conclude that $rC_{\frac{m+n}{2}}\subset \Xi$. The desired boundedness now follows as it did in the first case considered.
	
	We have just demonstrated that $\|\Gamma_{f, \R_+^d}\| < \infty$. By Theorem~\ref{lacey} there exists a function $b \in L^\infty(\R^d)$ such that $\hat b|_{\R^d_+} = f$. This in particular implies that $\hat b|_{\Omega} = f$ when we return to the initial interpretation of $f$ as a distribution on $\Omega$.
\end{proof}

\begin{theorem} \label{t1alt}
	Let $\Xi$ be a simple convex polytope, and let $f\in \ddd'(\Omega)$, $\Omega=2\Xi$. Then $\Gamma_f \colon L^2(\Xi) \to L^2(\Xi)$ is bounded if and only if there is a function $b \in L^\infty(\R^d)$ such that $\hat b |_{\Omega}=f.$ There exists a constant $c > 0$, depending on $\Xi$, such that $b$ can be chosen to satisfy
	$$c\|b\|_{L^\infty} \leq \|\Gamma_f\| \leq \|b\|_{L^\infty}.$$
\end{theorem}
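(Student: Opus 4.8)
The estimate $\|\Gamma_f\|\le\|b\|_{L^\infty}$ is immediate from Proposition~\ref{ptrivial}, so the content of the theorem is the converse implication together with the lower bound: assuming $\Gamma_f$ bounded, one must construct $b\in L^\infty(\R^d)$ with $\hat b|_\Omega=f$ and $\|b\|_{L^\infty}\le C\|\Gamma_f\|$. The plan is to localize $f$ near the vertices of $\Omega=2\Xi$ and apply Proposition~\ref{p1} at each vertex. Since $2\Xi$ is the image of $\Xi$ under the dilation $x\mapsto 2x$, it is again a simple convex polytope; write $\{x_j\}_{j=1}^J$ for its (necessarily simple) vertices and let $\{\mu_j\}_{j=1}^J$ be a partition of unity as in Lemma~\ref{l9}: $\mu_j\in C_c^\infty(\R^d)$, $\sum_j\mu_j=1$ on $\overline\Omega$, and $\supp\mu_j\cap\partial_{\far,x_j}\Omega=\emptyset$. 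Put $f_j=\mu_j f\in\ddd'(\Omega)$, so that $\supp f_j\subseteq\supp\mu_j$ avoids $\partial_{\far,x_j}\Omega$.

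The crucial point is that each localized operator $\Gamma_{f_j}$ remains bounded, with $\|\Gamma_{f_j}\|\le\|\widehat{\mu_j}\|_{L^1}\|\Gamma_f\|$. This is the Bochner-integral device already used in the proof of Proposition~\ref{pextend}: since $\widehat{\mu_j}\in\mathcal{S}(\R^d)\subseteq L^1(\R^d)$ we may write $\mu_j(w)=\int_{\R^d}\widehat{\mu_j}(\xi)e^{2\pi i w\cdot\xi}\,d\xi$, and a short computation gives, for $\varphi\in C_c^\infty(\Xi)$ and $x\in\Xi$,
$$\Gamma_{f_j}(\varphi)(x)=\int_{\R^d}\widehat{\mu_j}(\xi)\,e^{2\pi i x\cdot\xi}\,\Gamma_f(\varphi_\xi)(x)\,d\xi,\qquad\varphi_\xi(y)=e^{2\pi i y\cdot\xi}\varphi(y);$$
as $\|\varphi_\xi\|_{L^2(\Xi)}=\|\varphi\|_{L^2(\Xi)}$, the triangle inequality for $L^2(\Xi)$-valued integrals yields the bound. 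Now Proposition~\ref{p1} applies to each $f_j$ (using that $x_j$ is a simple vertex of $\Omega$ because $\Xi$ is simple) and produces $b_j\in L^\infty(\R^d)$ with $\widehat{b_j}|_\Omega=f_j$. Setting $b=\sum_{j=1}^Jb_j\in L^\infty(\R^d)$ we obtain $\hat b|_\Omega=\sum_jf_j=\bigl(\sum_j\mu_j\bigr)f=f$ on $\Omega$, which establishes the qualitative claim.

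For the quantitative bound I would appeal to the open mapping theorem. By Proposition~\ref{lem:banach}, the space $X=\{\Gamma_{f,\Xi}:\|\Gamma_{f,\Xi}\|<\infty\}$ is a Banach space (isometric to $(\PW_\Xi\odot\PW_\Xi)^*$), and the map $\Phi\colon L^\infty(\R^d)\to X$, $\Phi(b)=\Gamma_{\hat b|_\Omega}$, is linear, bounded with $\|\Phi\|\le 1$ by Proposition~\ref{ptrivial}, and surjective by the construction above. Hence $\Phi$ is open, so there is a constant $c>0$, depending only on $\Xi$, such that every bounded $\Gamma_f$ admits a representative $b$ with $\hat b|_\Omega=f$ and $c\|b\|_{L^\infty}\le\|\Gamma_f\|$. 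Combined with Proposition~\ref{ptrivial} this gives $c\|b\|_{L^\infty}\le\|\Gamma_f\|\le\|b\|_{L^\infty}$. One can also avoid the open mapping theorem and track constants explicitly: the proof of Proposition~\ref{p1} rests on Theorem~\ref{lacey}, whose constant is dimensional, and the finitely many affine changes of variables there are determined by $\Xi$; this yields $\|b_j\|_{L^\infty}\le C_\Xi\|\Gamma_{f_j}\|$, hence $\|b\|_{L^\infty}\le C_\Xi\bigl(\sum_j\|\widehat{\mu_j}\|_{L^1}\bigr)\|\Gamma_f\|$.

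The main obstacle is the localization estimate $\|\Gamma_{\mu_jf}\|\le\|\widehat{\mu_j}\|_{L^1}\|\Gamma_f\|$: cutting off the symbol by a smooth bump could in principle destroy boundedness, and it is precisely the translation structure of Hankel operators — exploited by expanding $\mu_j$ into characters via Fourier inversion — that rescues it. The remaining ingredients (reduction to one vertex through Proposition~\ref{p1}, the partition of unity of Lemma~\ref{l9}, and the duality bookkeeping behind the constant) are either quoted from the results above or routine.
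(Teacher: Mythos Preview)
Your proof is correct and follows essentially the same route as the paper: localize via the partition of unity $\{\mu_j\}$ from Lemma~\ref{l9}, bound $\|\Gamma_{\mu_jf}\|\le\|\widehat{\mu_j}\|_{L^1}\|\Gamma_f\|$ by the Fourier-inversion/Bochner-integral device, apply Proposition~\ref{p1} at each simple vertex, sum the resulting $b_j$, and obtain the constant $c$ from the open mapping theorem using Proposition~\ref{lem:banach}. Your additional remark that the constant can alternatively be made explicit by tracking the estimates through Proposition~\ref{p1} and Theorem~\ref{lacey} is a valid observation not spelled out in the paper.
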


\begin{proof}
	Assume that $\Gamma_f$ is bounded. Let $\{x_j\}_{j=1}^J$ be the vertices of $\Omega$, and let $\{\mu_j\}_{j=1}^J$ be partition of unity as in Lemma \ref{l9}. For $\varphi \in C_c^\infty(\Xi)$ and $x \in \Xi$ we have that
	$$\Gamma_{\mu_j f}(\varphi)(x)=\int_{\Xi} \int_{\R^d} \hat{\mu}_j(\xi)e^{2\pi i (x+y)\cdot\xi}\,d\xi \, f(x+y)\varphi(y) \, dy= \int_{\mathbb{R}^d} \hat{\mu}_j(\xi){e^{2\pi i \xi \cdot x}}\Gamma_{f}(\varphi_\xi)(x) \, d\xi,$$
	where $\varphi_\xi(y) = e^{2\pi i y\cdot\xi}\varphi(y)$. Hence, $\Gamma_{\mu_j f} \colon L^2(\Xi) \to L^2(\Xi)$ is bounded,
	$$\|\Gamma_{\mu_j f}\| \leq \|\hat{\mu}_j\|_{L^1} \|\Gamma_f\|.$$
	Therefore, by Proposition~\ref{p1} there are functions $b_j \in L^\infty$ such that $\mu_jf=\hat b_j|_\Omega$. Thus $f=\hat{b}|_\Omega$, where $b = \sum_{j=1}^J b_j \in L^\infty$. Conversely, if $f=\hat{b}|_\Omega$, where $b \in L^\infty$, then $\Gamma_f$ is bounded by Proposition \ref{ptrivial}.

	The constant $c$ now arises from abstract reasoning. Consider the Banach space
$$X = \left\{\Gamma_{f, \Xi} \, : \, \| \Gamma_{f, \Xi} \| < \infty \right\}$$
of Proposition~\ref{lem:banach}. We have just shown that $b \mapsto \Gamma_{\hat b |_{\Omega}, \Xi}$ is a map of $L^\infty$ onto $X$. The open mapping theorem hence guarantees the existence of $c$.
\end{proof}
We immediately obtain the corresponding result for Toeplitz operators, when $\Xi$ is a simple convex polytope which, possibly after a translation, is symmetric under $x \mapsto -x$.
\begin{corollary}\label{t2}
	Let $\Xi$ be a simple convex polytope such that for some $z \in \R^d$ it holds that $\Xi+z = -\Xi - z$. Let $f \in \ddd'(\Omega)$, $\Omega=\Xi - \Xi = 2\Xi + 2z$. Then $\Theta_f$ is bounded if and only if there exists a function $b \in L^\infty(\R^d)$ such that $\hat b|_{\Omega} = f$. There exists a constant $c > 0$, depending on $\Xi$, such that $b$ can be chosen to satisfy
	$$c\|b\|_{L^\infty} \leq \|\Theta_f\| \leq \|b\|_{L^\infty}.$$
\end{corollary}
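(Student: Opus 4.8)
The plan is to deduce Corollary~\ref{t2} from Theorem~\ref{t1alt}: the reflection symmetry of $\Xi$ identifies the Toeplitz operator $\Theta_{f,\Xi}$ with the Hankel operator $\Gamma_{f,\Xi}$ pre-composed with a unitary involution, after which the Nehari theorem for Hankel operators applies essentially verbatim. First I would dispose of the translation parameter. Fix $z$ with $\Xi + z = -\Xi - z$ and set $\Xi' = \Xi + z$, so that $\Xi'$ is again a simple convex polytope and $\Xi' = -\Xi'$. The translation $(Uh)(x) = h(x - z)$ is unitary from $L^2(\Xi)$ onto $L^2(\Xi')$, and a one-line change of variables in the defining integral gives $U \Theta_{f,\Xi} U^{*} = \Theta_{f,\Xi'}$, with the same symbol $f$ and the same domain $\Omega = \Xi - \Xi = \Xi' - \Xi'$. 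Hence it suffices to prove the corollary for $\Xi'$, i.e.\ we may assume $\Xi = -\Xi$ from now on.

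Next, let $J \colon L^2(\Xi) \to L^2(\Xi)$ be defined by $(Jg)(y) = g(-y)$; since $\Xi = -\Xi$ this is a well-defined unitary involution which maps $C_c^\infty(\Xi)$ onto itself. Because $-\Xi = \Xi$ we also have $\Omega = \Xi - \Xi = \Xi + \Xi = 2\Xi$, so $f$ is a legitimate symbol for \emph{both} $\Theta_{f,\Xi}$ and the Hankel operator $\Gamma_{f,\Xi}$. I claim that $\Theta_{f,\Xi} = \Gamma_{f,\Xi}\,J$ as densely defined operators: for $\varphi \in C_c^\infty(\Xi)$, the substitution $y \mapsto -y$ (which preserves $\Xi$ and thus merely relabels the integration variable) carries the distributional action defining $\Theta_{f}(\varphi)$ into the one defining $\Gamma_{f}(J\varphi)$ — this is just the remark in Section~\ref{subsec:TCO} that Toeplitz operators correspond to $\Upsilon = -\Xi$ and Hankel operators to $\Upsilon = \Xi$. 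Since $J$ is unitary, $\Theta_{f}$ extends to a bounded operator on $L^2(\Xi)$ if and only if $\Gamma_{f}$ does, and in that case $\|\Theta_f\| = \|\Gamma_f J\| = \|\Gamma_f\|$.

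Finally I would apply Theorem~\ref{t1alt} to $\Gamma_{f,\Xi}$, which is legitimate since $\Xi$ is now a simple convex polytope. This yields $b \in L^\infty(\R^d)$ with $\hat b|_\Omega = f$ and $c\|b\|_{L^\infty} \le \|\Gamma_f\| \le \|b\|_{L^\infty}$, where $c$ is the constant of Theorem~\ref{t1alt} for the (translated) polytope; combining with $\|\Theta_f\| = \|\Gamma_f\|$ and undoing the translation — which alters neither $f$, nor $\Omega$, nor any operator norm — gives precisely Corollary~\ref{t2}, with $c$ depending only on $\Xi$. The easy bound $\|\Theta_f\| \le \|b\|_{L^\infty}$ in the converse direction also follows directly from Proposition~\ref{ptrivial}, since $\Theta_{f,\Xi}$ is the truncated correlation operator $\Psi_{f,-\Xi,\Xi}$.

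I do not anticipate a genuine obstacle. The only points requiring care are the bookkeeping in the distributional change of variables establishing $\Theta_f = \Gamma_f J$ — one should phrase it, as in Section~\ref{secdist}, in terms of the action of $f$ on translates of test functions, and check that $J$ maps $C_c^\infty(\Xi)$ into $C_c^\infty(\Xi)$ thanks to $\Xi = -\Xi$ — together with the routine verification that translation and reflection preserve the class of simple convex polytopes so that the constant from Theorem~\ref{t1alt} is available and depends only on $\Xi$.
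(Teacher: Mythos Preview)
Your proposal is correct and follows essentially the same approach as the paper: both reduce $\Theta_{f,\Xi}$ to a Hankel operator $\Gamma$ via the unitary reflection afforded by the symmetry $\Xi+z=-\Xi-z$, and then invoke Theorem~\ref{t1alt}. The only cosmetic difference is that you translate the domain first (working on $\Xi'=\Xi+z$ with the same symbol $f$), whereas the paper keeps the original $\Xi$ and translates the symbol to $\tilde f(x)=f(x+2z)$; these are equivalent bookkeeping choices.
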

\begin{proof}
In this case $\Theta_{f} g = \Gamma_{\tilde{f}} \tilde{g}$, where $\tilde{f}(x) = f(x + 2z)$, $x \in 2\Xi$, and $\tilde{g}(x) = g(-x-2z)$, $x \in \Xi$. Hence the result follows from Theorem~\ref{t1alt}.
\end{proof}
We also deduce the weak factorization result for $\PW_\Omega^1$, see Section~\ref{subsec:PW}.
\begin{corollary} \label{cor:fact}
	Let $\Xi$ be a simple convex polytope, and let $\Omega = 2\Xi$. Then
	$$PW_{\Omega}^1 = \PW_{\Xi}\odot \PW_{\Xi}.$$
	The norms of these Banach spaces are equivalent.
\end{corollary}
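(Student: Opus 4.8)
The plan is to read the factorization off the duality that is already in place. By Proposition~\ref{lem:banach}, $(\PW_\Xi\odot\PW_\Xi)^*$ is the space $X$ of bounded general domain Hankel operators, with pairing $(f,g)=\mu(\mathcal{F}^{-1}g)$, $g\in C_c^\infty(\Omega)$; on the other hand, since $\PW_\Omega^1$ is the $L^1$-closure of $D:=\mathcal{F}^{-1}(C_c^\infty(\Omega))$, Hahn--Banach identifies $(\PW_\Omega^1)^*$ with $L^\infty(\R^d)/\{b:\hat b|_\Omega=0\}$, i.e.\ with the distributions $f$ on $\Omega$ of the form $\hat b|_\Omega$. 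Theorem~\ref{t1alt}, together with Proposition~\ref{ptrivial}, says exactly that these two dual spaces consist of the same distributions and carry equivalent norms. To transfer this to the preduals I would compare the two norms directly on the common dense subspace $D$: it is dense in $\PW_\Omega^1$ by definition, and dense in $\PW_\Xi\odot\PW_\Xi$ by the proof of Proposition~\ref{lem:banach}.

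For $F=\mathcal{F}^{-1}g\in D$ and any representation $F=\sum_j G_jH_j$ with $G_j,H_j\in\PW_\Xi\subset L^2$, we have $\|F\|_{L^1}\le\sum_j\|G_j\|_{L^2}\|H_j\|_{L^2}$, hence $\|F\|_{L^1}\le\|F\|_{\PW_\Xi\odot\PW_\Xi}$. For the reverse bound, let $c>0$ be the constant of Theorem~\ref{t1alt}. By Proposition~\ref{lem:banach}, $\|F\|_{\PW_\Xi\odot\PW_\Xi}=\sup\{|\mu(F)|:\mu\in(\PW_\Xi\odot\PW_\Xi)^*,\ \|\mu\|\le1\}$, and each such $\mu$ corresponds to a distribution $f$ on $\Omega$ with $\|\Gamma_f\|=\|\mu\|\le1$. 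Unwinding $(f,g)=\mu(\mathcal{F}^{-1}g)$ and the definition of the distributional Fourier transform shows that $\mu(F)=\int_{\R^d}\tilde b(x)F(x)\,dx$ for any $b\in L^\infty(\R^d)$ with $\hat b|_\Omega=f$, where $\tilde b(x)=b(-x)$; therefore $|\mu(F)|\le\|b\|_{L^\infty}\|F\|_{L^1}$. Since Theorem~\ref{t1alt} provides such a $b$ with $c\|b\|_{L^\infty}\le\|\Gamma_f\|\le1$, we get $|\mu(F)|\le c^{-1}\|F\|_{L^1}$, and thus $\|F\|_{\PW_\Xi\odot\PW_\Xi}\le c^{-1}\|F\|_{L^1}$.

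Combining, $c\|F\|_{\PW_\Xi\odot\PW_\Xi}\le\|F\|_{L^1}\le\|F\|_{\PW_\Xi\odot\PW_\Xi}$ for every $F\in D$; as $D$ is dense in both $\PW_\Omega^1$ and $\PW_\Xi\odot\PW_\Xi$, their completions coincide, with equivalent norms, which is the corollary. I expect the only delicate point to be the bookkeeping in the identity $\mu(F)=\int\tilde b\,F$ (Fourier normalisations and conjugations) and pinning down the density of $D$ in $\PW_\Xi\odot\PW_\Xi$ from the proof of Proposition~\ref{lem:banach}. An alternative that avoids the explicit pairing computation is to observe that the contraction $\iota\colon\PW_\Xi\odot\PW_\Xi\to\PW_\Omega^1$ (viewing a $\odot$-sum as the corresponding $L^1$ function) has adjoint equal, under the identifications above, to the tautological map $f\mapsto\Gamma_f$; Theorem~\ref{t1alt} makes this adjoint a Banach space isomorphism, and the closed range theorem then forces $\iota$ itself to be an isomorphism.
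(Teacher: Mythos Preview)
Your argument is correct and uses the same core ingredients as the paper (Proposition~\ref{lem:banach}, Theorem~\ref{t1alt}, and the density of $D=\mathcal{F}^{-1}(C_c^\infty(\Omega))$ in both spaces), but the packaging differs slightly. The paper argues at the level of the adjoint: it shows the bounded inclusion $I\colon\PW_\Xi\odot\PW_\Xi\to\PW_\Omega^1$ has dense range, then uses Theorem~\ref{t1alt} to prove $I^*$ is onto, concluding that $I$ is a Banach space isomorphism by the open mapping theorem. To make the surjectivity computation come out cleanly the paper introduces an anti-linear involution $D$ on $(\PW_\Xi\odot\PW_\Xi)^*$ induced by $G\mapsto\overline{G(-\cdot)}$, which absorbs the conjugation/reflection mismatch between the $L^\infty$-pairing and the Hankel pairing. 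Your direct norm comparison on $D$ sidesteps that device: you simply compute $\mu(F)=\int \tilde b\,F$ and absorb the reflection into $\tilde b$, which is arguably cleaner. Your ``alternative'' in the last paragraph is precisely the paper's route.

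The one point worth making fully explicit is that the pairing identity $\mu(F)=\int_{\R^d}\tilde b\,F$ really does hold: with $F=\mathcal{F}^{-1}g$ one has $\mu(F)=(f,g)=(\hat b,\mathcal{F}F)=(b,\mathcal{F}^2F)=(b,JF)=\int b(-x)F(x)\,dx$, using $\mathcal{F}^2=J$. This is exactly the bookkeeping you flagged, and it checks out.
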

\begin{proof}
By Cauchy-Schwarz, the inclusion $I \colon \PW_{\Xi}\odot \PW_{\Xi} \to PW_{\Omega}^1$ is bounded. Since $I$ has dense range by Proposition~\ref{lem:banach}, the adjoint $I^* \colon (\PW^1_\Omega)^* \to (\PW_{\Xi}\odot \PW_{\Xi})^*$ has empty kernel. Suppose $\mu \in (\PW_{\Xi}\odot \PW_{\Xi})^*$.  Note that $CG(x) = \overline{G(-x)}$ defines an anti-linear isometric involution  $C \colon \PW_{\Xi}\odot \PW_{\Xi} \to \PW_{\Xi}\odot \PW_{\Xi}$. This induces an anti-linear isometric involution $D \colon (\PW_{\Xi}\odot \PW_{\Xi})^* \to (\PW_{\Xi}\odot \PW_{\Xi})^*$,
$$D\mu(G) = \overline{\mu(CG)}, \quad G \in \PW_{\Xi}\odot \PW_{\Xi}.$$

According to Proposition~\ref{lem:banach}, $( f, g ) =  \mu(\mathcal{F}^{-1}g)$, $g \in C_c^\infty(\Omega)$, defines a distribution on $\Omega$ such that $\|\Gamma_{f,\Xi}\| = \|\mu\|$. By Theorem~\ref{t1alt}, there is a function $b \in L^\infty(\R^d)$ such that $\hat{b}|_{\Omega} = f$. Since $\PW_\Omega^1 \subset L^1(\R^d)$, we can interpret $b$ as an element of $(\PW_\Omega^1)^*$, $b(G) = \langle G, b \rangle_{L^2(\R^d)}$. Then, recalling that $JG(x) = G(-x)$, we have that
$$DI^* b (G) = (b, JG)  = ( f, \mathcal{F}^{-1} JG ) = ( f, \mathcal{F} G ) = \mu(G), \quad G \in \mathcal{F}^{-1}(C_c^\infty(\Omega)),$$
that is, $DI^*b = \mu$, or $I^*b = D\mu$. Since $D$ is an involution, it follows that $I^*$ is onto.

In other words, $I^* \colon (\PW^1_\Omega)^* \to (\PW_{\Xi}\odot \PW_{\Xi})^*$ is a Banach space isomorphism, and therefore the inclusion $I \colon \PW_{\Xi}\odot \PW_{\Xi} \to \PW^1_\Omega$ is as well. Hence,
$$\PW_{\Xi}\odot \PW_{\Xi} = \PW^1_\Omega,$$
and the norms of these two Banach spaces are equivalent, by the open mapping theorem.
\end{proof}
The method used to prove Theorem~\ref{t1alt} extends to many unbounded polyhedral sets. Instead of pursuing a general statement, let us consider the example of a strip in $\R^2$,
\begin{equation} \label{eq:strip}
\Xi = \R_+ \times (0,1).
\end{equation}
This is an interesting addition to Theorem~\ref{t1alt}, since $\Xi$ does not have a simple vertex at infinity. In fact, $\partial \Xi$ may be considered to have a cusp point there.

\begin{proposition}
Let $\Xi$ be the strip defined in \eqref{eq:strip}, and let $f \in \ddd'(\Omega)$, $\Omega = 2\Xi$. Then $\Gamma_f \colon L^2(\Xi) \to L^2(\Xi)$ is bounded if and only if there is a function $b \in L^\infty(\R^d)$ such that $\hat b |_{\Omega}=f.$
\end{proposition}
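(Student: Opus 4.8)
The plan is to mimic the proof of Theorem~\ref{t1alt} via Proposition~\ref{p1}, but to deal separately with the two genuine vertices of $\Omega = \R_+ \times (0,2)$ at $(0,0)$ and $(0,2)$, and with the "cusp at infinity." First I would fix a partition of unity argument: write $\Omega = \R_+ \times (0,2)$ and split $f = \mu_0 f + \mu_2 f + \mu_\infty f$, where $\mu_0$ is supported near the vertex $(0,0)$, $\mu_2$ near $(0,2)$, and $\mu_\infty$ is supported in a half-strip $\{x_1 > R\} \times (0,2)$ away from both vertices; arrange $\mu_0, \mu_2 \in C_c^\infty(\R^2)$ and $\mu_\infty$ with bounded derivatives (e.g.\ depending only on $x_1$ for $x_1$ large). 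As in the proof of Theorem~\ref{t1alt}, convolution on the Fourier side shows each of $\Gamma_{\mu_0 f}$, $\Gamma_{\mu_2 f}$, $\Gamma_{\mu_\infty f}$ is bounded, with norm controlled by $\|\widehat{\mu_\bullet}\|_{L^1}\|\Gamma_f\|$ (for $\mu_\infty$ one needs $\widehat{\mu_\infty}$ to be a finite measure, which holds if $\mu_\infty$ is chosen as a fixed smooth profile in the $x_1$-variable times the constant $1$ in the bounded $x_2$-variable, suitably cut off).

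Next I would handle the two vertex pieces exactly as in Proposition~\ref{p1}: each vertex $(0,0)$ and $(0,2)$ of $\Omega$ is a simple vertex (locally $\Omega$ looks like a quadrant), so the affine-change-of-variables and cube-decomposition argument of Proposition~\ref{p1} applies verbatim, producing $b_0, b_2 \in L^\infty(\R^2)$ with $\widehat{b_0}|_\Omega = \mu_0 f$ and $\widehat{b_2}|_\Omega = \mu_2 f$. The one new ingredient is the piece $\mu_\infty f$, supported in a half-strip $S = (R,\infty)\times(0,2)$, the "$+$-sum" of the half-strip $\Xi_\infty = (R/2,\infty)\times(0,1)$ with itself. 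Here I would argue that $\Gamma_{\mu_\infty f, \Xi}$ being bounded forces $\Gamma_{\mu_\infty f, \Xi_\infty}$ (or a translate of it) to be bounded on $L^2$ of a half-strip, and then reduce the half-strip case to the already-known case by Fourier-expanding the bounded variable: writing $g \in L^2((R/2,\infty)\times(0,1))$ in the orthonormal basis $\{e^{2\pi i k x_2}\}_{k\in\Z}$ of $L^2(0,1)$ in the second variable turns $\Gamma_{\mu_\infty f}$ into a matrix of one-dimensional Hankel-type operators $\Gamma_{f_{k,l}, \R_+}$ on $L^2(R/2,\infty)$ indexed by $(k,l) \in \Z^2$, where $f_{k,l}$ are the Fourier coefficients of $\mu_\infty f$ in $x_2$; a direct two-variable Nehari argument — or, cleanly, an application of Theorem~\ref{lacey} with $d=2$ after extending $\mu_\infty f$ by zero to the cone $\R_+ \times \R_+$ and noting the extension is still a bounded-operator symbol because the support is confined to the strip — yields $b_\infty \in L^\infty(\R^2)$ with $\widehat{b_\infty}|_\Omega = \mu_\infty f$.

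Finally, setting $b = b_0 + b_2 + b_\infty \in L^\infty(\R^2)$ gives $\widehat b|_\Omega = f$; the converse direction is immediate from Proposition~\ref{ptrivial}. The main obstacle I anticipate is the infinite piece: unlike in the compact polytope case, the support of $\mu_\infty f$ is noncompact, so the cube-decomposition bookkeeping of Proposition~\ref{p1} has infinitely many nonzero blocks and must be replaced either by a translation-plus-Theorem~\ref{lacey} argument on the genuine cone $\R_+^2$ (the point being that extension by zero off the strip does not destroy boundedness, since any test function against the extended symbol only "sees" the strip) or by the explicit Fourier-series-in-$x_2$ reduction to the classical one-variable Nehari theorem; making one of these two reductions rigorous — in particular checking that the extended symbol on $\R_+^2$ really does induce a bounded Hankel operator there — is where the real work lies.
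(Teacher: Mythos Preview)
Your three-piece partition (two compact pieces at the corners, one piece $\mu_\infty$ carrying the tail in $x_1$) is not the right decomposition, and the difficulties you flag at the end are genuine and not easily repaired along the lines you suggest.

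First, a small issue: your claim that $\widehat{\mu_\infty}$ is a finite measure is false. If $\mu_\infty(x_1,x_2)=\rho(x_1)$ with $\rho$ a smooth step (zero for $x_1<R$, one for $x_1$ large), then $\widehat{\rho}$ contains a principal-value $1/\xi$ term and is not a finite measure. This particular step can be salvaged by writing $\Gamma_{\mu_\infty f}=\Gamma_f-\Gamma_{\mu_0 f}-\Gamma_{\mu_2 f}$, so it is not fatal.

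The real problem is the $\mu_\infty$ piece itself. Because $\mu_0,\mu_2$ are compactly supported near the corners, the support of $\mu_\infty f$ necessarily touches \emph{both} horizontal edges $x_2=0$ and $x_2=2$ of $\Omega$ (for $x_1>R$). This is exactly what defeats your option (a): extending $\mu_\infty f$ by zero to $\R_+^2$ and running the block-decomposition argument of Proposition~\ref{p1}, now with horizontal strips $rS_n=\R_+\times(rn,r(n+1))$ instead of cubes, you need every contributing block to satisfy $rS_{(m+n)/2}\subset\Xi=\R_+\times(0,1)$. But blocks coming from the part of $\mu_\infty f$ near $x_2=2$ force $(m+n)/2$ close to $1/r$, so $rS_{(m+n)/2}$ spills over the top of $\Xi$. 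In short, no single half-plane extension works when the symbol meets both horizontal boundaries. Your option (b) is not worked out enough to assess; the Fourier-in-$x_2$ reduction couples different frequencies (since the periods on $(0,1)$ and $(0,2)$ differ) and does not obviously yield a diagonal family of one-variable Hankel operators.

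The paper's proof avoids all of this with a different, simpler partition: take $\nu_1,\nu_2\in C_c^\infty(\R)$ with $\nu_1+\nu_2\equiv 1$ on $[0,2]$, $\nu_1$ vanishing near $2$, $\nu_2$ vanishing near $0$, and set $\mu_j(x)=\nu_j(x_2)$. These multipliers depend only on $x_2$, so $\widehat{\nu_j}\in L^1(\R)$ and the boundedness of $\Gamma_{\mu_j f,\Xi}$ follows exactly as in Theorem~\ref{t1alt} (integrating only in the $\xi_2$-variable). Now $\mu_1 f$ stays away from $x_2=2$ only, so the strip decomposition shows $\Gamma_{\mu_1 f}$ is bounded on $L^2(\R_+^2)$; symmetrically $\Gamma_{\mu_2 f}$ is bounded on $L^2(\R_+\times(-\infty,1))$. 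Theorem~\ref{lacey} then applies to each. The moral: split according to the two horizontal faces of $\Omega$, not according to ``corners versus infinity.''
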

\begin{proof}[Proof sketch.]
Let $\nu_1, \nu_2 \in C_c^\infty(\R)$ be functions such that $\nu_1(t) + \nu_2(t) = 1$ for $t \in [0,2]$, $\nu_1$ vanishes in a neighborhood of $2$, and $\nu_2$ vanishes in a neighborhood of $0$. Let
$$\mu_j(x) = \nu_j(x_2), \quad j=1,2, \; x = (x_1, x_2) \in \Omega.$$
Then for $\varphi \in C_c^\infty(\Xi)$ and $x = (x_1, x_2) \in \Xi$ we have that
	$$\Gamma_{\mu_j f}(\varphi)(x)=\int_{\Xi} \int_{\R} \hat{\nu}_j(\xi)e^{2\pi i (x_2+y_2) \xi}\,d\xi \, f(x+y)\varphi(y) \, dy= \int_{\mathbb{R}} \hat{\nu}_j(\xi){e^{2\pi i x_2\xi  }}\Gamma_{f}(\varphi_\xi)(x) \, d\xi,$$
	where $\varphi_\xi(y) = e^{2\pi i y_2 \xi}\varphi(y)$, $y = (y_1, y_2) \in \Xi$. Hence, as before we see that
	\begin{equation} \label{eq:stripbdd}
	\|\Gamma_{\mu_j f, \, \Xi}\| \leq \|\hat{\nu}_j\|_{L^1} \|\Gamma_{f, \, \Xi} \|, \quad j=1,2.
	\end{equation}
As in Proposition~\ref{p1} and Theorem~\ref{t1alt} it is sufficient to see that $\Gamma_{\mu_1 f} \colon L^2(\R_+^2) \to L^2(\R_+^2)$ and $\Gamma_{\mu_2 f} \colon L^2(\R_+ \times (-\infty, 1) ) \to L^2(\R_+ \times (-\infty, 1))$ define bounded operators, and by symmetry it is sufficient to consider the first of the two.

For $n \in \N$, let $S_n$ denote the strip $\R_+ \times (n, n+1)$, and let $r > 0$ be such that
$$
2r < \dist([0,2] \cap \supp \nu_1, 2).
$$
We decompose $\Gamma_{\mu_1 f} \colon L^2(\R_+^2) \to L^2(\R_+^2)$ according to strips instead of cubes,
$$
\Gamma_{\mu_1 f, \, \R_+^2} = \sum_{m,n \in \N} P_{rS_n} \Gamma_{\mu_1 f, \, \R_+^2} P_{r S_m}.$$
There are only a finite number of non-zero terms in this decomposition, and for any such term we by our choice of $r$ that
\begin{equation} \label{eq:stripcontained}
rS_m + rS_n \subset \Omega.
\end{equation}
For $n,m$ corresponding to a non-zero term, we have that
$$\|P_{rS_n} \Gamma_{\mu_1 f, \, \R_+^2} P_{r S_m} \| = \|\Psi_{\mu_1f, rS_m, rS_n}\| = \|\Psi_{\mu_1f, rS_m + z, rS_n - z}\| = \|\Psi_{\mu_1f, rS_{\frac{m+n}{2}}, rS_{\frac{m+n}{2}}}\|,$$
where $z = (0, r(n-m)/2)$. Since $rS_{\frac{m+n}{2}} \subset \Xi$ by \eqref{eq:stripcontained} and $\Gamma_{\mu_1 f} \colon L^2(\Xi) \to L^2(\Xi)$ is bounded by \eqref{eq:stripbdd}, we conclude that each non-zero term $P_{rS_n} \Gamma_{\mu_1 f, \, \R_+^2} P_{r S_m}$ is bounded. Hence $\Gamma_{\mu_1 f} \colon L^2(\R_+^2) \to L^2(\R_+^2)$ is bounded, finishing the proof.
\end{proof}
\section{General domain Toeplitz operators} \label{sec:gdtoep}

In this section we consider general domain Toeplitz operators on open convex domains $\tilde\Xi\subset\R^d$ such that both $\cc_{\tilde\Xi}$ and $\bc_{\tilde\Xi}$ have non-empty interior (as in the classical case $\tilde\Xi=\R_+$). This forces $\tilde\Xi$ to be unbounded and, as we shall soon see, it also entails that $\tilde{\Omega}=\tilde\Xi-\tilde\Xi = \R^d$. We shall also consider more general open connected sets $\Xi$ such that there are points $x_0$ and $x_1$ for which \begin{equation}\label{gd}x_1+\tilde\Xi\subset \Xi\subset x_0+\tilde\Xi,\end{equation}
and prove that $\|\Theta_{f, \Xi}\|=\|\hat f\|_{L^\infty}$ under this hypothesis. This allows for domains $\Xi$ with very irregular boundaries, in sharp contrast to Theorem \ref{t1alt}. The corresponding class of operators $\Theta_{f,\Xi}$ partially extends the class of generalized Toeplitz operators considered in \cite{janson1988paracommutators}, see Section~\ref{secBHO}. The next theorem can also be recovered by verifying the hypotheses of and keeping track of the constants in the proof of \cite[Theorem 5.4]{janson1988paracommutators}. However, for completeness we prefer to give our own concrete proof.

\begin{theorem}\label{toeplitz}
Let $\Xi$ be a set as above. Then $\Xi-\Xi=\R^d$ and, for $f\in\ddd'(\R^d)$, we have that $\Theta_f \colon L^2(\Xi) \to L^2(\Xi)$ is bounded if and only if $f\in \f^{-1}(L^\infty)$. Moreover, $\|\Theta_f\|=\|\hat f\|_{L^\infty}$.
\end{theorem}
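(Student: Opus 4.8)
The goal is to show that for the ``cone-like'' domains $\Xi$ satisfying \eqref{gd}, the Toeplitz operator $\Theta_f \colon L^2(\Xi) \to L^2(\Xi)$ is bounded if and only if $f$ is a tempered distribution with $\hat f \in L^\infty(\R^d)$, and that in this case $\|\Theta_f\| = \|\hat f\|_{L^\infty}$. The first order of business is the structural claim $\Xi - \Xi = \R^d$. By \eqref{gd} it suffices to prove $\tilde\Xi - \tilde\Xi = \R^d$, and since $\tilde\Xi$ is open convex with $\cc_{\tilde\Xi}$ having nonempty interior, the recession cone is solid; using the identification of $\cc_{\tilde\Xi}$ with the polar of $\bc_{\tilde\Xi}$ (from Section~\ref{sec:convex}) together with the assumption that $\bc_{\tilde\Xi}$ is also solid, one sees that $\tilde\Xi$ contains a translate of a solid cone $C$ and its opposite direction too, so $\tilde\Xi - \tilde\Xi$ contains $C - C = \R^d$.

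**The easy inequality.** One direction is essentially Proposition~\ref{ptrivial} applied with $\Upsilon = -\Xi$: if $\hat b = f$ for $b \in L^\infty$, then $\Theta_f = \Psi_{f, -\Xi, \Xi}$ is bounded with $\|\Theta_f\| \leq \|b\|_{L^\infty}$, and taking $b = \hat f$ (the unique such $b$ once we know $f \in \f^{-1}(L^\infty)$, since here $\Omega = \R^d$) gives $\|\Theta_f\| \leq \|\hat f\|_{L^\infty}$. So the content is the reverse: boundedness of $\Theta_f$ forces $f$ to be tempered with bounded Fourier transform and $\|\hat f\|_{L^\infty} \leq \|\Theta_f\|$.

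**The hard direction.** Assume $\|\Theta_f\| < \infty$. First reduce to smooth compactly supported symbols via Proposition~\ref{pextend2}: the approximants $f_n = \omega_n(f * \psi_n)$ satisfy $\|\Theta_{f_n, \Xi}\| \leq \|\Theta_{f, \Xi}\|$ and $f_n \to f$ in $\ddd'(\R^d)$. For each fixed $n$, $f_n$ is a nice function, so it is enough to show $\|\widehat{f_n}\|_{L^\infty} \leq \|\Theta_{f_n}\|$ uniformly, and then pass to the limit: $\widehat{f_n} \to \hat f$ in $\ddd'$, a weak-$*$ compactness (Alaoglu) argument on the ball of $L^\infty$ of radius $\|\Theta_f\|$ identifies the limit as an $L^\infty$ function $b$ with $\hat b = f$ and $\|b\|_{L^\infty} \leq \|\Theta_f\|$; since $\Omega = \R^d$ this forces $b = \hat f$ and $f \in \f^{-1}(L^\infty)$. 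The core estimate, for smooth $f_n$, is to test $\Theta_{f_n}$ against modulated bump functions that approximately diagonalize it: fix $\xi \in \R^d$, pick a unit-norm $\varphi \in C_c^\infty(\R^d)$ with small support and set $g_R(y) = R^{d/2}\varphi(y/R) e^{2\pi i \xi \cdot y}$, translated so that its support, and that of its image, lie inside $\Xi$ — this is possible precisely because $\Xi$ contains translates of a solid cone, so for large $R$ a ball of radius $\sim R$ fits inside $\Xi$ and can be placed so that $-\supp g_R \subset \Xi$ as well (using $\cc_{\tilde\Xi}$ solid) allowing the convolution defining $\Theta_{f_n}(g_R)$ to see all of $f_n$. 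Then $\Theta_{f_n}(g_R)(x) = \int f_n(x - y) g_R(y)\,dy$, and a stationary-phase / Riemann–Lebesgue style computation shows $\langle \Theta_{f_n} g_R, g_R\rangle \to \widehat{f_n}(-\xi)$ (up to a harmless sign/conjugation bookkeeping) as $R \to \infty$, since the rescaled bumps concentrate in frequency at $\xi$; hence $|\widehat{f_n}(\xi)| \leq \|\Theta_{f_n}\|$ for every $\xi$, i.e. $\|\widehat{f_n}\|_{L^\infty} \leq \|\Theta_{f_n}\| \leq \|\Theta_f\|$.

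**Main obstacle.** The delicate point is the geometric bookkeeping in the test-function step: one must choose, for each frequency $\xi$ and each scale $R$, a translation vector so that both $\supp g_R$ and $-\supp g_R$ sit inside $\Xi$ while $R \to \infty$ — this is exactly where the hypotheses that $\cc_{\tilde\Xi}$ and $\bc_{\tilde\Xi}$ are solid (and the sandwiching \eqref{gd}) are used, and it is what makes $\Omega = \R^d$ rather than a proper subset. Once that placement is arranged, the asymptotic $\langle \Theta_{f_n} g_R, g_R\rangle \to \widehat{f_n}(\xi)$ is a routine consequence of the decay of $\hat\varphi$ and dominated convergence, and the final weak-$*$ limit argument is standard. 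Care is also needed to check tempered-ness of $f$ at the end — but this comes for free once the weak-$*$ limit $b \in L^\infty$ has been produced with $\hat b = f$ on all of $\R^d$.
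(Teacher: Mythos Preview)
Your overall strategy is correct and the core estimate works, but your approach to the key step differs from the paper's. Both arguments reduce to smooth compactly supported $f$ via Proposition~\ref{pextend2} and finish with an Alaoglu weak-$*$ limit; the difference is in the test functions used to show $|\hat f(\xi)|\le\|\Theta_f\|$. The paper uses the decaying characters $E_\varepsilon(x)=e^{\varepsilon x\cdot\nu+2\pi i x\cdot\xi}\chi_\Xi$, with $\nu\in\inte(\bc_{\tilde\Xi})$ ensuring $E_\varepsilon\in L^2(\Xi)$ (quoting an external lemma), and then shows $\langle\Theta_f E_\varepsilon,E_\varepsilon\rangle/\|E_\varepsilon\|^2\to\hat f(\xi)$ as $\varepsilon\to0^+$ via a chain of set inclusions involving $\cc_{\tilde\Xi}$. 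Your approximate-eigenvector argument with compactly supported modulated bumps is more elementary and in fact never uses the hypothesis that $\bc_{\tilde\Xi}$ has nonempty interior; all you need is that $\Xi$ contains arbitrarily large balls, which follows from $\cc_{\tilde\Xi}$ being solid together with $x_1+\tilde\Xi\subset\Xi$.

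That said, several details in your write-up are either wrong or unnecessary. First, the normalization should be $R^{-d/2}\varphi(y/R)$, not $R^{d/2}\varphi(y/R)$, to keep $\|g_R\|_{L^2}=1$. Second, in your argument for $\Xi-\Xi=\R^d$ the claim that ``$\tilde\Xi$ contains a translate of $C$ and its opposite direction too'' is false (take $\tilde\Xi=\R_+^d$); what you actually need, and what is true, is just that $\tilde\Xi$ contains a translate of the solid cone $C=\cc_{\tilde\Xi}$, whence $\tilde\Xi-\tilde\Xi\supset C-C=\R^d$. The assumption on $\bc_{\tilde\Xi}$ plays no role here. Third, and most importantly, the requirement ``$-\supp g_R\subset\Xi$'' is both impossible in the typical case (e.g.\ $\Xi\subset\R_+^d$) and unnecessary: once $\supp g_R\subset\Xi$ you have $\langle\Theta_{f_n}g_R,g_R\rangle_{L^2(\Xi)}=\langle f_n*g_R,g_R\rangle_{L^2(\R^d)}$ automatically, and the computation $\int g_R(x-z)\overline{g_R(x)}\,dx\to e^{-2\pi i\xi\cdot z}$ (uniformly for $z\in\supp f_n$) followed by dominated convergence gives $\langle\Theta_{f_n}g_R,g_R\rangle\to\hat f_n(\xi)$ directly. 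So drop that side condition. Finally, in the limiting step be careful not to write ``$\hat f_n\to\hat f$'' before you know $f$ is tempered: the correct order is that a weak-$*$ limit $b\in L^\infty$ of $(\hat f_{n_k})$ exists, hence $f_{n_k}\to\check b$ in $\mathcal{S}'\subset\ddd'$, and comparison with $f_{n_k}\to f$ in $\ddd'$ forces $f=\check b$, which is what gives temperedness and $\hat f=b$.
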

\begin{proof}
Fix $z\in\R^d$ and set $|z|=R$. Pick a vector $e\in \inte (\cc_{{\tilde{\Xi}}})$ with distance greater than $R$ to the complement of $\cc_{\tilde{\Xi}}$, which is possible since $\cc_{{\tilde{\Xi}}}$ is a cone with non-empty interior. Then $e+z\in \cc_{{\tilde{\Xi}}}$, so for any $x \in\tilde{\Xi}$ we have that $x_1+x+e+z\in x_1+\tilde{\Xi}\subset \Xi$. Similarly, $x_1+x+e\in \Xi$. Since $z$ is the difference of these two vectors, the first claim follows.

Suppose that we have proven the theorem for all $f \in C_c^\infty(\R^d)$. If $f$ is a general symbol for which $\Theta_f$ is bounded, consider the sequence of functions $f_n \in C_c^\infty(\R^d)$ from Proposition~\ref{pextend2}. Then $\hat f_n$ has, by Alaoglu's theorem, a subsequence $\hat f_{n_k}$ which converges weak-star in $L^\infty$ to some element $g$. Since $f_n$ converges to $f$ in distribution, it must be that $g = \hat{f}$. Hence $f\in \f^{-1}(L^\infty)$ and, by Propositions~\ref{ptrivial} and \ref{pextend2}, we have that
$$\| \hat f \|_{L^\infty} \leq \varlimsup_{k \to \infty} \| \hat f_{n_k} \|_{L^\infty} = \varlimsup_{k \to \infty} \|\Theta_{f_{n_k}}\| \leq \|\Theta_{f}\| \leq \| \hat f \|_{L^\infty}.$$
This proves the theorem for general symbols.

 Hence we assume that $f \in C_c^\infty(\R^d)$. Fix $\xi\in\R^d$, pick any vector $\nu$ in $\inte(\bc_{\tilde\Xi})$, and consider for $\varepsilon > 0$ the function $$E_{\varepsilon}(x)=e^{\varepsilon x\cdot \nu+2\pi i x\cdot\xi}\chi_{\Xi}(x), \quad x \in \Xi.$$ By \cite[Lemma~9.5]{andersson2015general} this function is in $L^2(x_0+\tilde\Xi)$,\footnote{The set $\bc_{\tilde\Xi}$ was denoted $\Theta$ in \cite{andersson2015general}.} and hence $E_\varepsilon \in L^2(\Xi)$. We use $E_\varepsilon$ as a test function:
\begin{align*}
\|\Theta_f\| \geq \left|\frac{\scal{\Theta_f E_{\varepsilon},E_{\varepsilon}}}{\|E_\varepsilon\|^2}\right| &= \left|\frac{1}{\|E_\varepsilon\|^2}\int\int f(x-y)e^{\varepsilon (x+y)\cdot \nu}e^{2\pi i (y-x)\cdot \xi}\chi_{\Xi}(y)\chi_{\Xi}(x) \, dy \, dx\right| \\
&=\left|\frac{1}{\|E_\varepsilon\|^2}\int f(z)e^{-2\pi i z\cdot \xi}\int e^{\varepsilon (z+2y) \cdot \nu}\chi_{\Xi}(z+y)\chi_{\Xi}(y) \, dy \,dz\right|
\end{align*}
Hence it follows that $\|\Theta_f\|\geq |\hat{f}(\xi)|$ upon showing that \begin{equation}\label{ef}\lim_{\varepsilon\rightarrow 0^+} \frac{e^{\varepsilon z \cdot \nu}}{\|E_\varepsilon\|^2} \int e^{2\varepsilon y \cdot \nu}\chi_{\Xi}(z+y)\chi_{\Xi}(y) \, dy = 1\end{equation}
uniformly on compacts in $z$. Since $\xi$ is arbitrary this establishes that $\|\Theta_f\|\geq\|\hat f\|_{L^\infty}$ and by Proposition \ref{ptrivial} we then conclude that $\|\Theta_f\|=\|\hat f\|_{L^\infty}$.

Fix $R>0$ and suppose that $z \in \R^d$ with $|z|<R$. Again, pick a vector $e\in \inte (\cc_{{\tilde{\Xi}}})$ with distance greater than $R$ to the complement of $\cc_{{\tilde{\Xi}}}$. Then $e+z\in \cc_{{\tilde{\Xi}}}$, and therefore
 $$-z+\Xi\supset -z+(x_1+\tilde\Xi)\supset -z+x_1+(e+z)+\tilde\Xi\supset x_1+e-x_0+x_0+\tilde\Xi \supset x_1+e-x_0+\Xi.$$
 With $x_2=x_1+e-x_0$ we have just shown that $x_2+\Xi\subset-z+\Xi$. It also holds that $x_2+\Xi\subset\Xi$,  by the last inclusion in the above chain and the fact that $x_1+e+\tilde\Xi\subset x_1+\tilde\Xi\subset\Xi$. This gives us that
$$ \chi_{\Xi}(y-x_2)=\chi_{\Xi}(y)\chi_{\Xi}(y-x_2)\leq\chi_{\Xi}(y)\chi_{\Xi}(y+z)\leq \chi_{\Xi}(y),$$ and hence that
$$e^{\varepsilon 2x_2 \cdot \nu}\|E_\varepsilon\|^2 =\int e^{2\varepsilon y \cdot \nu}\chi_{\Xi}(y-x_2) \, dy\leq\int e^{2\varepsilon y \cdot \nu}\chi_{\Xi}(y+z) \chi_{\Xi}(y) \, dy\leq \int e^{2\varepsilon y \cdot \nu}\chi_{\Xi}(y)\,dy =\|E_\varepsilon\|^2.$$ The desired equality \eqref{ef} is now immediate, completing the proof.
\end{proof}

\begin{corollary} \label{cor:ex}
	Let $\Xi \subset \R^d$ be any open connected domain such that
	$$(1,\infty)^d \subset \Xi \subset (0,\infty)^d,$$
	and let $f \in \ddd'(\R^d)$. Then $\Theta_{f} \colon L^2(\Xi) \to L^2(\Xi)$ is bounded if and only if $f$ is a tempered distribution and $\|\hat{f}\|_{L^\infty(\R^d)} < \infty$, and in this case
	$$\|\Theta_f\| = \| \hat{f} \|_{L^\infty}.$$
\end{corollary}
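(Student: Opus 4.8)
The plan is to recognise $\Xi$ as a domain of the type covered by Theorem~\ref{toeplitz}, with the model convex domain taken to be $\tilde\Xi = \R_+^d = (0,\infty)^d$. The first step is to verify that $\tilde\Xi$ meets the standing hypotheses of that section, namely that $\cc_{\tilde\Xi}$ and $\bc_{\tilde\Xi}$ both have non-empty interior. For the characteristic cone one checks directly from the definition that $\cc_{\R_+^d} = \overline{\R_+^d}$, whose interior $\R_+^d$ is non-empty. For the barrier cone, $h_{\R_+^d}(\theta) = \sup_{x \in \R_+^d} x\cdot\theta$ equals $0$ when every coordinate of $\theta$ is $\leq 0$ and $+\infty$ otherwise, so $\bc_{\R_+^d} = -\overline{\R_+^d}$, with non-empty interior $(-\infty,0)^d$. (This is merely the classical case $\tilde\Xi = \R_+$ taken to the $d$-th power coordinatewise.)

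Next I would verify the sandwiching condition \eqref{gd} for the given $\Xi$: taking $x_1 = (1,\dots,1)$ and $x_0 = (0,\dots,0)$, one has $x_1 + \tilde\Xi = (1,\infty)^d \subset \Xi$ and $\Xi \subset (0,\infty)^d = x_0 + \tilde\Xi$, which is exactly what \eqref{gd} asks. Note that $\Xi$ is assumed only open and connected, with no regularity whatsoever on its boundary, which is precisely the generality permitted by Theorem~\ref{toeplitz}.

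With these checks done, Theorem~\ref{toeplitz} applies directly and yields $\Xi - \Xi = \R^d$ together with the assertion that $\Theta_f \colon L^2(\Xi) \to L^2(\Xi)$ is bounded if and only if $f \in \f^{-1}(L^\infty)$, in which case $\|\Theta_f\| = \|\hat f\|_{L^\infty}$. The last step is the trivial reformulation of $f \in \f^{-1}(L^\infty)$: for $f \in \ddd'(\R^d)$, this membership means exactly that $f$ is a tempered distribution whose Fourier transform lies in $L^\infty(\R^d)$, since an $L^\infty$ function is tempered and, conversely, $f = \f^{-1}(\hat f)$ as soon as $f$ is tempered. Substituting this phrasing gives the corollary as stated. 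There is no real obstacle here; the entire substance resides in Theorem~\ref{toeplitz}, and the corollary is simply the remark that any open connected set trapped between the translated orthant corners $(1,\infty)^d$ and $(0,\infty)^d$ is the prototypical instance of that theorem's hypotheses.
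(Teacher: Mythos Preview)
Your proposal is correct and matches the paper's approach: the corollary is stated immediately after Theorem~\ref{toeplitz} without proof, as a direct specialization with $\tilde\Xi=\R_+^d$, and you have correctly supplied the routine verifications of $\cc_{\tilde\Xi}$, $\bc_{\tilde\Xi}$, and condition~\eqref{gd}.
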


\section{Bounded extension of multi-level block Toeplitz/Hankel-matrices} \label{sec:blocktoep}

In this section we interpret Corollary~\ref{t2}, when $\Xi$ is a $d$-dimensional cube, as a result on the possibility of extending finite multi-level block Toeplitz matrices to infinite multi-level block Toeplitz matrices which are \textit{bounded} as operators on $\ell^2$. In view of the equivalence between Toeplitz and Hankel operators on the cube (cf. the proof of Corollary~\ref{t2}), and a similar equivalence for finite Hankel and Toeplitz matrices, we could equally well make the analogous statement for multi-level block Hankel matrices. We present only the Toeplitz-case. Such matrices appear in various applications, for example in multi-dimensional frequency estimation. Note in particular that Pisarenko's famous method for one-dimensional frequency estimation \cite{pisarenko1973retrieval}, which relies on the classical Carath\'eodory-Fej\'er theorem, was recently extended to the multi-variable case \cite{yang2015generalized} (see also \cite{andersson2016structure}).

When $d=1$ our statement reduces to a well-known theorem on extending finite (ordinary) Toeplitz matrices, appearing previously for example in \cite{BT01} and \cite{NF03}. To describe it, recall that a finite $N\times N$ Toeplitz-matrix is characterized by its constant diagonals, whose values we denote by $a = (a_{-N+1},\ldots a_{N-1})$. As an operator $T_a$ on $\ell^2(\{0,\ldots,N-1\})$, its action is given by
 $$T_a(v)(m)=\sum_{n=0}^{N-1}a_{m-n}v_n, \quad v \in \ell^2(\{0,\ldots,N-1\}), \; m \in \{0,\ldots,N-1\}.$$
We can also consider the case when $N = \infty$, the definitions extending in the obvious way. The completion result then states that it is always possible to extend $a$ to a \textit{bi-infinite} sequence $\tilde{a}$ such that the corresponding Toeplitz operator $T_{\tilde{a}}\colon \ell^2(\N) \to \ell^2(\N)$ satisfies $$\|T_{\tilde{a}}\|\leq 3\|T_a\|.$$
It is an open problem whether the constant $3$ is the best possible in this inequality. A discussion offering different approaches to the optimal constant can be found in \cite{Bess152}. See also \cite{Sara07}.

When $d>1,$ each multi-sequence $a=(a_n)_{n\in \{-N+1,\ldots,N-1\}^d},$ generates a multi-level block Toeplitz matrix $T_a$. As an operator on $\ell^2(\{0,\ldots,N-1\}^d)$ it is given by the formula
$$T_a(v)(m)=\sum_{n\in \{0,\ldots,N-1\}^d}a_{m-n}v_n, \quad v \in \ell^2(\{0,\ldots,N-1\}^d), \; m \in \{0,\ldots,N-1\}^d.$$
To understand this matrix, consider the $d$-level block Toeplitz matrix $T_a$ as an ordinary $N \times N$-Toeplitz matrix with entries which are $(d-1)$-level block Toeplitz matrices,
$$T_a = \{A_{i-j}\}_{i,j \in \{0,\ldots,N-1\}}, \quad A_i = \{a_{(i,m-n)}\}_{m,n \in \{0,\ldots,N-1\}^{d-1}}.$$
For instance, a multi-level block Toeplitz matrix for $d=2$ is an $N\times N$ Toeplitz matrix whose entries are $N \times N$ Toeplitz matrices. Again, we allow for the possibility that $N=\infty$. We now provide the multi-level block Toeplitz matrix analogue of the Toeplitz matrix completion theorem.

\begin{theorem} \label{thm:blocktoep}
	There exists a constant $C_d > 0$ such that any finite multi-sequence $a$ can be extended to an infinite multi-sequence $\tilde a$ on $\Z^d$ for which $T_{\tilde{a}} \colon \ell^2(\N^d) \to \ell^2(\N^d)$ is bounded with norm
	$$\|T_{\tilde{a}}\|\leq C_d\|T_a\|.$$
\end{theorem}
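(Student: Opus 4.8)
The plan is to deduce Theorem~\ref{thm:blocktoep} from Corollary~\ref{t2} applied to the cube $\Xi = (0,1)^d$ (or a centered version of it) by a discretization argument. The key point is that a finite multi-level block Toeplitz matrix $T_a$, with $a = (a_n)_{n \in \{-N+1,\dots,N-1\}^d}$, should be realized, up to a bounded and boundedly invertible change of variables, as (a compression of) the general domain Toeplitz operator $\Theta_{f,\Xi}$ for a suitable distribution $f$ supported on $\Omega = \Xi - \Xi = (-1,1)^d$. The natural choice is to take $f$ to be a finite sum of point masses, $f = \sum_{n} a_n \delta_{n/N}$ (rescaled so the grid $\tfrac{1}{N}\Z^d$ fits inside $\Omega$), and to let $\Xi$ be a slightly enlarged cube so that the relevant lattice points $\{0,1/N,\dots,(N-1)/N\}^d$ sit inside $\Xi$. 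With such an $f$, testing $\Theta_f$ against characteristic functions of small cubes of side $1/N$ centered at the lattice points reproduces the matrix $T_a$ acting on $\ell^2(\{0,\dots,N-1\}^d)$, with the operator norm of $\Theta_f$ comparable (with constants depending only on $d$, via the $1/N$ volume normalization) to $\|T_a\|$.

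First I would make the dictionary between finite multi-level block Toeplitz matrices and distribution symbols precise: given $a$, define the distribution $f_a = \sum_{n \in \{-N+1,\dots,N-1\}^d} a_n \,\delta_{2n/(2N-1)}$ (or a convolution of it with a fixed bump of width $\ll 1/N$ to make it smooth, invoking Proposition~\ref{pextend2}-type reasoning so that nothing is lost), supported inside $\Omega = (-1,1)^d$. Then I would check the identity $\langle \Theta_{f_a} \varphi_m, \varphi_{m'}\rangle = c_N\, a_{m'-m}$ for $\varphi_m$ the ($L^2$-normalized) indicator of the cube of side $h \approx 1/N$ centered at $mh$, for $m, m' \in \{0,\dots,N-1\}^d$; here $c_N$ is an explicit normalization constant. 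This shows $\|T_a\| \leq \|\Theta_{f_a,\Xi}\|$ after the appropriate rescaling (with a cube $\Xi$ chosen so that all these small cubes lie in $\Xi$ and $\Xi - \Xi \subset (-1,1)^d$ — e.g. $\Xi$ a cube of side slightly less than $1$ containing $\{0,h,\dots,(N-1)h\}^d$).

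Next, apply Corollary~\ref{t2}: since $\Theta_{f_a,\Xi}$ is bounded (it is finite-dimensional, being essentially the finite matrix $T_a$ padded appropriately — or, more carefully, bounded because $f_a$ is a finite combination of smooth bumps), there is $b \in L^\infty(\R^d)$ with $\hat b|_\Omega = f_a$ and $\|b\|_{L^\infty} \leq c^{-1}\|\Theta_{f_a,\Xi}\|$, with $c = c(\Xi) = c(d)$. Now $b \in L^\infty(\R^d)$ has a Fourier series / restriction: sampling $\hat b$ (which agrees with $f_a$ on $\Omega$) on the lattice recovers an infinite multi-sequence $\tilde a$ extending $a$, and the corresponding infinite Toeplitz operator $T_{\tilde a}$ on $\ell^2(\N^d)$ is bounded by the norm of the Laurent/multiplication operator $M_b$ (Fourier multiplier) restricted to a half-space lattice, hence $\|T_{\tilde a}\| \leq \|b\|_{L^\infty} \leq c(d)^{-1}\|\Theta_{f_a,\Xi}\| \leq C_d \|T_a\|$. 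Concretely: $b \in L^\infty(\R^d)$, restricted to the cube $\Omega$ and expanded in Fourier series in the dilated variable, yields $\tilde a_k = \widehat{(b|_\Omega)}(\text{dual lattice point})$, and the infinite multi-level block Toeplitz operator $T_{\tilde a}$ is unitarily equivalent to compression of multiplication by the periodization/restriction of $b$ to the Hardy-type subspace $\ell^2(\N^d) \subset \ell^2(\Z^d)$, whence $\|T_{\tilde a}\| \le \|b\|_\infty$.

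The main obstacle I anticipate is the bookkeeping at the two discretization interfaces: (i) making rigorous that the finite matrix $T_a$ is \emph{exactly} (not just approximately) a compression of $\Theta_{f_a,\Xi}$ and controlling the proportionality constant $c_N$ uniformly so that it cancels and leaves only a $d$-dependent constant — this requires choosing the bump-width, the cube $\Xi$, and the lattice spacing $h$ compatibly, and handling the ``boundary'' lattice points near $\partial\Xi$; and (ii) translating the statement ``$\exists b \in L^\infty(\R^d)$ with $\hat b|_\Omega = f_a$'' back into ``$\exists$ bounded infinite multi-level block Toeplitz extension'', i.e. showing that sampling the $L^\infty$ symbol on the lattice yields a genuinely bounded infinite Toeplitz operator with norm $\le \|b\|_\infty$. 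Point (ii) is the discrete analogue of Proposition~\ref{ptrivial} and should go through cleanly once one observes that a multi-level block Toeplitz operator on $\ell^2(\N^d)$ with symbol-sequence the Fourier coefficients of $g \in L^\infty(\T^d)$ is the compression to the Hardy space of multiplication by $g$; the only care needed is matching the period of the torus to the size of $\Omega$. I would also remark (as the paper does for $d=1$ via \cite{carlsson2011truncated}) that, conversely, Theorem~\ref{thm:blocktoep} together with an approximation argument recovers Corollary~\ref{t2} for cubes, so the two results are essentially equivalent; but for the stated direction only the above suffices.
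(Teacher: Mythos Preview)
Your overall strategy matches the paper's: realize $T_a$ inside a continuous Toeplitz operator $\Theta_{f,\Xi}$ with $f$ a finite sum of Dirac masses, apply Corollary~\ref{t2} to get $b\in L^\infty(\R^d)$, then discretize back to obtain $\tilde a$. However, both interfaces (i) and (ii) that you flag as obstacles are in fact genuine gaps in your outline, not just bookkeeping.

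\textbf{Gap at step (i): wrong direction of the inequality.} Testing $\Theta_{f_a}$ against the orthonormal family $\{\varphi_m\}$ of normalized cube-indicators exhibits $T_a$ as a \emph{compression} of $\Theta_{f_a,\Xi}$, which yields only $\|T_a\|\le\|\Theta_{f_a,\Xi}\|$. But the chain $\|b\|_{L^\infty}\le c^{-1}\|\Theta_{f_a,\Xi}\|\le C_d\|T_a\|$ requires the \emph{reverse} bound $\|\Theta_{f_a,\Xi}\|\lesssim\|T_a\|$, uniformly in $N$, which you never establish. The paper gets this (with equality, in fact) by the observation you are missing: writing $x=m+r$ with $m\in\{0,\dots,N-1\}^d$ and $r\in[0,1)^d$, one has $\Theta_f(g)(m+r)=T_a(g_r)(m)$ where $g_r=(g(r+k))_k$. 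Thus $\Theta_f$ is a direct integral $\int_{[0,1)^d}^{\oplus}T_a\,dr$ of copies of $T_a$, whence $\|\Theta_f\|\le\|T_a\|$ after integrating in $r$. (Incidentally, the paper avoids your rescaling altogether by taking $\Xi=(0,N)^d$ with Diracs on the integer lattice and noting that the constant $c$ in Corollary~\ref{t2} is homothety-invariant.)

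\textbf{Gap at step (ii): the extraction of $\tilde a$ from $b$ does not work as you describe.} You propose $\tilde a_k$ to be the Fourier coefficients of $b|_\Omega$, but these have no direct relationship to the point-mass structure of $\hat b|_\Omega=f_a$: the $k$th Fourier coefficient of $b|_\Omega$ equals $(\hat b,\widehat{\chi_\Omega}(\cdot-k/|\Omega|))$, and since $\widehat{\chi_\Omega}$ is not compactly supported this involves $\hat b$ outside $\Omega$, where you have no control. Likewise, ``sampling $\hat b$ on the lattice'' is ill-defined because $\hat b$ is merely a tempered distribution off $\Omega$. The paper's device is to fix a unit-norm bump $\varphi\in C_c^\infty((-\tfrac12,\tfrac12)^d)$, define the isometry $I\colon\ell^2(\N^d)\to L^2(\R^d)$ by $Iv=\sum_n v_n\varphi(\cdot-n)$, and set $T_{\tilde a}:=I^*\,\Theta_{\hat b,\R^d}\,I$. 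Then $\tilde a_n=\iint \hat b(x-y+n)\varphi(y)\overline{\varphi(x)}\,dy\,dx$ is well-defined, equals $a_n$ for $n\in\{-N+1,\dots,N-1\}^d$ because $\hat b=f$ on $(-N,N)^d$ and the kernel $\varphi*\widetilde{\bar\varphi}$ is supported in $(-1,1)^d$, and $\|T_{\tilde a}\|\le\|\Theta_{\hat b,\R^d}\|=\|b\|_{L^\infty}$ since $I$ is an isometry.
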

\begin{proof}
	Let
	$$f=\sum_{n\in \{-N+1,\ldots,N-1\}^d} a_n\delta_n,$$
	where $\delta_n$ is the Dirac delta function at $n$,
	$$\delta_n(\varphi)=\varphi(n), \quad \varphi \in C_c^\infty(\R^d).$$
	 Set $\Xi=(0,N)^d$ and consider $\Theta_f=\Theta_{f,\Xi}$. Given $g\in C^\infty_c(\Xi)$, a short calculation shows that
	$$\Theta_f(g)(x)=\sum_{n\in \Z^d\cap (x-\Xi)} a_n g(x-n), \quad x \in (0, N)^d.$$
	 With $x=m+r$, where $m \in \{0,\ldots,N-1\}^d$ and $r\in[0,1)^d$,  this can be rewritten
  	$$\Theta_f(g)(m+r)=\sum_{k\in \{0,\ldots,N-1\}^d} a_{m-k}g(r+k).$$
  	 In other words, with $g_r=\{g(r+n)\}_{n\in \{0,\ldots,N-1\}^d}$, we have that $$\Theta_f(g)(m+r)=T_a(g_r)(m).$$
	Hence
	 $$\sum_{m\in \{0,\ldots,N-1\}^d}|\Theta_f(g)(m+r)|^2=\|T_a(g_r)\|^2\leq \|T_a\|^2\|g_r\|^2=\|T_a\|^2\sum_{m\in \{0,\ldots,N-1\}^d}|g(m+r)|^2.$$
	Integrating both sides over $r \in (0,1)^d$ gives us that $\|\Theta_f(g)\|^2\leq\|T_a\|^2\|g\|^2$. In other words, $\Theta_f \colon L^2(\Xi) \to L^2(\Xi)$ is bounded and
	 $$\|\Theta_f\|\leq \|T_a\|.$$
	  Noting that the constant $c$ in Corollary~\ref{t2} is invariant under homotheties, we find that there exists a distribution $\tilde{f} = \hat b \in \ddd'(\R^d)$, coinciding with $f$ on $(-N,N)^d$, such that
	  $$\|\Theta_{\tilde f,\R^d}\|\leq C_d\|T_a\|,$$
	   where $C_d$ only depends on the dimension $d$. Of course, $\Theta_{\tilde f,\R^d} \colon L^2(\R^d) \to L^2(\R^d)$ is nothing but the operator of convolution with $\tilde f$.
	
	  Now pick any function $\varphi\in C^\infty_c((-1/2,1/2)^d)$ with $\int |\varphi|^2 dx=1$ and consider the isometry $I \colon \ell^2(\N^d)\to L^2(\R^d)$ given by $$I v(x)=\sum_{n\in\N^d}v_n \varphi(x-n), \quad v \in \ell^2(\N^d), \; x \in \R^d.$$
	  Then
	  $$I^* g(n) = \int_{\R^d} g(x) \overline{\varphi(x-n)} \, dx, \quad g \in L^2(\R^d), \; n \in \N^d.$$
	It follows that
	$$I^*\Theta_{\tilde f} I v (m) = \sum_{n \in \N^d} \tilde{a}_{m-n} v_n, \quad v \in \ell^2(\N^d), \; m \in \N^d,$$
	where
	$$\tilde{a}_n = \int_{\R^d} \int_{\R^d} f(x-y + n) \varphi(y) \overline{\varphi(x)} \, dy \, dx, \quad n \in \Z^d.$$
	 That is, $I^*\Theta_{\tilde f}I = T_{\tilde{a}}$. It is clear by construction that $\tilde{a}$ is an extension of $a$,
	 $$\tilde{a}_n = a_n \int_{\R^d} |\varphi(y)|^2 \, dy = a_n, \quad n \in \{-N+1,\ldots,N-1\}^d.$$
	 This finishes the proof.
\end{proof}

\bibliographystyle{amsplain}
\bibliography{MCPerfekt5}

\end{document}